\documentclass[11 pt,reqno]{amsart}
 
\usepackage{amsmath,amsthm,amssymb,amscd,graphicx}
\usepackage[utf8]{inputenc}
\usepackage[T1]{fontenc}
\usepackage{bbm, dsfont}
\usepackage{color}
\usepackage{hyperref}
\usepackage{enumerate}
\usepackage{mathtools}
\usepackage{mathrsfs}
\usepackage{multirow}

\evensidemargin 0.0in \oddsidemargin 0.0in \textwidth 6.5in
\topmargin  -0.2in \textheight  9.0in \overfullrule = 0pt

\newtheorem{theorem}{Theorem}[section]

\newtheorem{lemme}[theorem]{Lemma}

\newtheorem{prop}[theorem]{Proposition}

\theoremstyle{definition}
\newtheorem{df}[theorem]{Definition}
\theoremstyle{remark}
\newtheorem{remark}[theorem]{Remark}

\numberwithin{equation}{section}

\renewcommand{\phi}{\varphi}

\def\C{\mathbb C}
\def\R{\mathbb R}
\def\T{\mathbb T}

\def\N{\mathbb N}

\def\E{\mathcal E}

\def\M{\mathcal M}

\def\eps{\varepsilon}
\def\dis{\displaystyle}    

\newcommand{\pa}[1]{\left(#1\right)}
\newcommand{\pac}[1]{\left[#1\right]}

\newcommand{\bra}[1]{\left\langle #1 \right\rangle}
\newcommand{\abs}[1]{\left|#1\right|}


\definecolor{gr}{rgb}   {0.,   0.69,   0.23 }
\definecolor{bl}{rgb}   {0.,   0.5,   1. }
\definecolor{mg}{rgb}   {0.85,  0.,    0.85}
\definecolor{yl}{rgb}   {0.8,  0.7,   0.}
\definecolor{or}{rgb}  {0.7,0.2,0.2}

\newcommand{\dd}{\mathrm{d}}

\renewcommand{\Re}{  {\mathfrak{Re}}  }

\newcommand{\ov}{  \overline  }

\DeclareRobustCommand{\rchi}{{\mathpalette\irchi\relax}}
\newcommand{\irchi}[2]{\raisebox{\depth}{$#1\chi$}}

\usepackage{graphicx} 
\usepackage{wrapfig}  
\usepackage{tikz}     
\usepackage{pgfplots}
\usepgfplotslibrary{fillbetween}
\usetikzlibrary{patterns}
\pgfplotsset{compat=1.16}

\begin{document}
\author{Valentin Schwinte}

\title[An Optimal Minimization Problem in the LLL]{An Optimal Minimization Problem in the Lowest Landau Level and Related Questions}

\subjclass[2010]{35Q55 ; 37K06 ; 35B08 ; 47J30} 
\keywords{Nonlinear Schrödinger equation, Lowest Landau Level, Stationary solutions, Minimization problem, Centrosymmetric matrices.}
\thanks{Adress: CNRS, IECL, Université de Lorraine, F-54000 Nancy, France. Email: valentin.schwinte@univ-lorraine.fr}

\begin{abstract}
We solve a minimization problem related to the cubic Lowest Landau level equation, which is used in the study of Bose-Einstein condensation. We provide an optimal condition for the Gaussian to be the unique global minimizer. This extends previous results from P.~G\'erard, P.~Germain and L.~Thomann. We then provide another condition so that the second special Hermite function is a global minimizer.
\end{abstract}

\maketitle

\section*{Acknowledgement}
The author warmly thanks Laurent Thomann, Pierre Germain and Nicolas Rougerie for the numerous insightful discussions leading to this work, and for all the remarks and improvements they provided.

\tableofcontents

\renewcommand{\labelenumi}{(\roman{enumi}).}

\section{Introduction and Main Results.}
\subsection{Physical modelling and motivations}
A \emph{Bose-Einstein condensate} is a state of matter one can obtain by cooling a gas made of non relativistic and identical bosons without spin nor potential energy below a critical temperature, called Bose-Temperature. More precisely, below this temperature, the majority of bosons are in the ground state, and we observe a new state of matter. We call this phenomenon \emph{Bose-Einstein condensation}, and it has applications in superfluidity, superconductivity, and black holes modelling~\cite{ABD,Rou1}.

Many properties of Bose-Einstein condensates are studied in quantum physics~\cite{AB,ABD,BBCE1,CRY,FB,Ho1,Rou1,RSY2}. Notably, a rising question of the last decades is the formation of vortices in such a condensate. A vortex is, in dimension~$2$, a point where the density vanishes, that is to say a zero of the density of probability~$|u|^2$ of the solution~$u$. In the presence of rotation, for an isotropic harmonic trap in the limit of weak interactions between particles, two possible regimes can appear for the Bose-Einstein condensate. The first one, when the rotation frequency is much smaller than the trapping frequency, the condensate has at most one vortex, but when the rotation frequency nearly compensates the trapping frequency, the number of vortices grows to infinity, and the radius of the system diverges. It seems that, in the latter regime, the vortices organise in a lattice. Some of the previous properties can be extended to a non harmonic confinement, for instance a quadratic plus quartic potential~\cite{ABD,BR,Rou2,RSY2}.

Let us consider such a condensate, confine it with a harmonic field, and put it in rotation at a high velocity. Then its dynamics is described by the Lowest Landau Level~\eqref{lll} equation:
\begin{equation*}\label{lll}\tag{LLL}
    \left\{
        \begin{array}{ll}
        i\partial_tu = \Pi(|u|^2u), &(t,z) \in \R \times \C, \\
        u(0,\cdot) = u_0 \in \mathcal{E},&
        \end{array}
    \right.
\end{equation*}
where $$ \mathcal{E} = \{ u(z)=e^{-\frac{|z|^2}{2}}f(z) , \;f \;\text{holomorphic}\} \cap L^2(\C),$$
is the Bargmann-Fock space, and $\Pi$ is the orthogonal projector onto $\mathcal{E}$. This equation had been studied in~\cite{GGT} and its dynamical properties have first been studied from a mathematical point of view in~\cite{Nier1}. The existence of progressive waves and of unbounded trajectories for coupled Bose-Einstein condensate has been studied in~\cite{SchTho} and~\cite{Tho1}. Note that the Eq.~\eqref{lll} and its dynamics are included in the studies~\cite{FGH,GHT1} of the cubic resonant~(CR) equation.

The Eq.~\eqref{lll} is Hamiltonian, with the structure  
$$ \Dot{u} = -i \frac{\delta \mathcal{H}}{\delta \ov{u}},\qquad \Dot{\ov{u}} = i \frac{\delta \mathcal{H}}{\delta {u}}$$
and the Hamiltonian functional
$$ \mathcal{H}(u) = \frac14\int_{\C}|u|^4 \dd L,$$
where $L$ stands for the Lebesgue measure on $\C$.

The following symmetries, called respectively phase rotations, space rotations, and magnetic translations leave invariant the Hamiltonian~$\mathcal{H}$:
$$T_\gamma : u(z) \longmapsto e^{i\gamma}u(z) \qquad \gamma \in \mathbb{R},$$
$$L_\theta : u(z) \longmapsto u(e^{i\theta}z) \qquad \theta \in \mathbb{R},$$
$$R_\alpha : u(z) \longmapsto u(z+\alpha)e^{\frac12(\overline{z}\alpha-z\overline{\alpha})} \qquad \alpha\in\C.$$
Then by Noether theorem, they are related to invariant quantities by the flow of~\eqref{lll}, respectively to the mass~$M$, the angular momentum~$P$ and the magnetic momentum~$Q$:
$$M(u) = \int_\C |u(z)|^2\dd L(z), \qquad P(u) = \int_\C (|z|^2-1)|u(z)|^2\dd L(z), $$
$$Q(u) = \int_\C z|u(z)|^2\dd L(z).$$

The energy functional of a Bose-Einstein condensate in a harmonic trap at high speed rotation can be written as 
$$ \mathcal{G}_\mu(u) := 8 \pi \mathcal{H}(u) + \mu P(u) = 2\pi \int_\C|u(z)|^4 \dd L(z)  + \mu \int_\C(|z|^2-1)|u(z)|^2\dd L(z),$$
where $\mu >0$ is a parameter. We consider in this article the minimization problem,  for $\mu > 0$,
\begin{equation}\label{min_pb}\tag{$\star$}
	\min_{\substack{u \in \mathcal{E} \\ M(u) =1}} \mathcal{G}_\mu(u),
\end{equation}
the constraint $M(u)=1$ being the classical normalisation of quantum mechanics. The global minimizers of~\eqref{min_pb} (which exist, see Theorem~\ref{thm1} below) are $L^2(\C)$ stationary waves for the~\eqref{lll} equation, i.e. some particular solution. See Definition~\ref{def_stat} for a rigorous definition.


\subsection{Mathematical motivation}
First, we give the expression of the Gaussian:
$$ \phi_0(z) := \frac{1}{\sqrt{\pi }}e^{-\frac{|z|^2}{2}}.$$
In~\cite{GGT}, the authors study the minimization problem above, arising in particular in the analysis of stationary waves of the Eq.~\eqref{lll}. We recall their main results in Sect.~\ref{stats_waves}. More precisely, they consider for $\mu > 0$ the minimization problem
$$
\min_{\substack{u \in \mathcal{E} \\ M(u) =1}} \mathcal{G}_\mu(u) \qquad \mbox{with} \qquad \mathcal{G}_\mu(u) =  8 \pi \mathcal{H}(u) + \mu P(u),
$$
for which they prove optimal condition in the parameter~$\mu$ for some local minimizers, and study the global minimization problem: they prove (see~\cite[Proposition~7.4, Proposition~7.5 and Remark~7.6]{GGT}) that the Gaussian~$\phi_0$ is a strict \emph{local} minimizer (up to symmetries) \emph{if and only if} $\mu> \frac12$, and the \emph{unique} (up to symmetries) global minimizer of $\mathcal{G}_\mu$ for any $\mu \geq \sqrt{3}-1$.

One can ask the following question: 
\begin{center}
Is~$\phi_0$ the unique (up to symmetries) global minimizer for any $\mu > \frac12$ ? 
\end{center}
In the sequel, we shall answer this question, and shall be able to give a necessary and sufficient condition on~$\mu$ for the Gaussian~$\phi_0$ to be the unique global minimizer of~$\mathcal{G}_\mu$. Thanks to~\cite{Clerck-Evnin}, we use a suitable change of variables leading to the study of an infinite quadratic form. We prove the latter is positive using linear algebra tools. We will also address the cases $\mu = \frac{1}{2}$ and $ \mu < \frac{1}{2}.$


\subsection{Main results.}
In this subsection, we will describe the main results of the paper, and give some insights on the proofs.

\subsubsection{Optimal global minimizers}
Our first result concerns directly the minimization problem, and answers the question raised in the previous paragraph. We improve the bound $\mu \geq \sqrt{3}-1$ for~$\phi_0$ to be the unique (up to phase rotations) global minimizer of~\eqref{min_pb}, to the optimal condition $\mu > \frac{1}{2}$:
\begin{theorem}\label{th1.1}
	The Gaussian~$\phi_0$ is the~\emph{unique} global minimizer of~\eqref{min_pb} for any $\mu > \frac12$, up to phase rotations. This condition is optimal.
\end{theorem}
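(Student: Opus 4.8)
The plan is to expand every competitor in the orthonormal basis of special Hermite functions $\phi_n(z)=\frac{1}{\sqrt{\pi n!}}z^ne^{-|z|^2/2}$ of $\mathcal{E}$, writing $u=\sum_{n\ge0}c_n\phi_n$. In this basis the two quadratic invariants are diagonal, $M(u)=\sum_n|c_n|^2$ and $P(u)=\sum_n n|c_n|^2$, so that $\phi_0$ corresponds to $c=(1,0,0,\dots)$ and $\mathcal{G}_\mu(\phi_0)=1$. The quartic term obeys the angular–momentum selection rule $i+k=j+l$ and resums as
\[
2\pi\int_\C|u|^4\,\dd L
=\sum_{\substack{i,k,j,l\ge0\\ i+k=j+l}}\frac{(i+k)!}{2^{\,i+k}\sqrt{i!\,j!\,k!\,l!}}\,c_ic_k\bar c_j\bar c_l
=\sum_{m\ge0}\frac{m!}{2^m}\,\abs{e_m}^2,\qquad e_m:=\sum_{i+k=m}\frac{c_ic_k}{\sqrt{i!\,k!}}.
\]
First I would record these identities together with $\mathcal{G}_\mu(\phi_0)=1$.

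The decisive reduction is to the threshold value $\mu=\tfrac12$. Writing $\mathcal{G}_\mu=\mathcal{G}_{1/2}+(\mu-\tfrac12)P$ and using $P\ge0$ with $P(u)=0$ if and only if $u\in\C\phi_0$, it suffices to prove the single scale-invariant inequality
\[
\Phi(u):=2\pi\int_\C|u|^4\,\dd L+\tfrac12\,P(u)M(u)-M(u)^2\ \ge\ 0\qquad(u\in\mathcal{E}).
\]
Indeed, once $\Phi\ge0$ is known, for $M(u)=1$ and $\mu>\tfrac12$ one has $\mathcal{G}_\mu(u)-1=\Phi(u)+(\mu-\tfrac12)P(u)\ge(\mu-\tfrac12)P(u)>0$ unless $P(u)=0$, i.e.\ unless $u$ is a phase rotation of $\phi_0$; this yields both global minimality and uniqueness in one stroke.

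To establish $\Phi\ge0$ I would organise everything by the total degree $m=i+k=j+l$. Since $M(u)^2=\sum_m\sum_{i+k=m}\abs{c_ic_k}^2$ and, after symmetrising, $P(u)M(u)=\sum_m\frac m2\sum_{i+k=m}\abs{c_ic_k}^2$, the functional splits as $\Phi=\sum_m\Phi_m$ with
\[
\Phi_m=\frac{m!}{2^m}\,\Bigl|\sum_{i+k=m}\frac{c_ic_k}{\sqrt{i!\,k!}}\Bigr|^2+\Bigl(\frac m4-1\Bigr)\sum_{i+k=m}\abs{c_ic_k}^2.
\]
For $m\notin\{2,3\}$ the coefficient $\frac m4-1$ is nonnegative (or the sector vanishes by inspection at $m=0,1$), so $\Phi_m\ge0$ outright; only the sectors $m=2$ and $m=3$, where $\frac m4-1<0$, are troublesome, and there $\Phi_m$ is \emph{indefinite} when viewed as a free quadratic form in the pair-variables $c_ic_k$. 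It is precisely here that the rank-one constraints $c_ic_k\cdot c_jc_l=c_ic_l\cdot c_jc_k$ cannot be ignored, and where the change of variables of \cite{Clerck-Evnin} enters: it recasts $\Phi$ as an explicit infinite quadratic form in new coordinates that already incorporate these constraints, so that the delicate balance between sectors becomes the positivity of a concrete matrix whose blocks (again indexed by $m$) are \emph{centrosymmetric} under the reflection $i\mapsto m-i$ inherited from $c_ic_k=c_kc_i$ and from the weight $1/\sqrt{i!\,(m-i)!}$. Proving that form positive semidefinite is then a linear-algebra problem: I would block-diagonalise each centrosymmetric block into its symmetric and antisymmetric parts, thereby halving its size, and verify positivity of the resulting smaller matrices by completing squares, the mechanism being the elementary identity $\tfrac12|x|^2+\sqrt2\,\Re(xy)+|y|^2=\tfrac12\,\abs{x+\sqrt2\,\bar y}^2\ge0$ that absorbs the indefinite cross-terms of the $m=2,3$ sectors.

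The main obstacle is exactly that the troublesome sectors $m=2,3$ are individually indefinite, so $\Phi\ge0$ is not a consequence of any mode-by-mode estimate; it hinges on the interaction between these sectors and the remaining coefficients, encoded in the rank-one structure of $c_ic_k$. The difficulty is thus twofold: to find coordinates in which this interaction becomes an honestly positive (rather than constrained) quadratic form, and to prove positivity of the resulting infinite matrix \emph{uniformly} in its size, not merely on finite truncations; I expect the centrosymmetric reduction to be the key device for both. Finally, for optimality I would test with $u_\varepsilon=\sqrt{1-\varepsilon^2}\,\phi_0+\varepsilon\,\phi_2$, for which a direct computation gives $\mathcal{G}_\mu(u_\varepsilon)=1+(2\mu-1)\varepsilon^2+\tfrac38\varepsilon^4$; this drops below $1$ for small $\varepsilon$ as soon as $\mu<\tfrac12$, so $\phi_0$ fails to be a minimizer there, while the degeneracy of $\Phi$ along the modes $n=2,3$ at the endpoint $\mu=\tfrac12$ (treated separately) is what permits other global minimizers, confirming that the condition $\mu>\tfrac12$ cannot be relaxed.
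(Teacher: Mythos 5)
Your reduction is sound up to a point: passing to the homogeneous functional $\Phi=F_{1/2}=2\pi\int|u|^4+\tfrac12 PM-M^2$, noting that $\Phi\ge0$ gives both minimality and uniqueness for every $\mu>\tfrac12$ via $\mathcal{G}_\mu(u)-1=\Phi(u)+(\mu-\tfrac12)P(u)$, and the optimality test with $u_\varepsilon=\sqrt{1-\varepsilon^2}\,\phi_0+\varepsilon\phi_2$ (your expansion $1+(2\mu-1)\varepsilon^2+\tfrac38\varepsilon^4$ is correct) all match the paper. The sector decomposition $\Phi=\sum_m\Phi_m$ is also correct. But the heart of the proof — why $\Phi\ge0$ — is missing, and the mechanism you propose for the troublesome sectors does not work. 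As you note, $\Phi_2=\sqrt2\,\Re\bigl(c_1^2\,\overline{c_0c_2}\bigr)$ and $\Phi_3$ are genuinely indefinite, so no sector-by-sector argument applied to $\Phi$ can succeed; the paper makes this explicit (Remark~\ref{RemBE}: the degree-$3$ block of $E=\tfrac12\Phi$ has a negative eigenvalue even after the centrosymmetric reduction). Your suggestion that a ``change of variables incorporating the rank-one constraints'' plus a completion of squares absorbs the cross-terms is not a proof step: the constraints $c_ic_k\cdot c_jc_l=c_ic_l\cdot c_jc_k$ couple different sectors, and no coordinate change is exhibited that turns the constrained problem into an unconstrained positive form.

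The missing idea is to subtract the magnetic momentum: set $B(u)=E(u)-\tfrac14|Q(u)|^2$ with $E=\tfrac12\Phi$ and $Q(u)=\int_\C z|u|^2$. The functional $B$ is invariant under the magnetic translations $R_\alpha$, and this extra symmetry rearranges the coefficients so that \emph{each} block $B^{(j)}$, restricted to the symmetric vectors $A_{jk}=A_{j,j-k}$ (the only ones that occur, since $A_{jk}=\frac{a_ka_{j-k}}{\sqrt{k!(j-k)!}}$), is positive semidefinite — including $j=2,3$, where the blocks vanish identically. One then recovers $E\ge0$ from $B\ge0$ by optimizing $E(R_\alpha u)$ over $\alpha$ (the minimum at $\alpha=Q(u)/M(u)$ is exactly $B(u)$). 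Even granting this, the blockwise positivity is not a completion of squares: each reduced block is a rank-one perturbation of a tridiagonal matrix, and the paper needs eigenvalue interlacing, a Sturm-sequence sign count for the tridiagonal part, and an explicit solution of the resulting three-term recurrence via the generating function $(1+x)^{a}(1-x)^2$ to show that at most two eigenvalues are non-positive, matching the two-dimensional kernel. Without the passage to $B$ and this analysis, your argument does not close.
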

\begin{remark}
    The optimality is to understand in the following sense: $\phi_0$ is a strict local minimizer of~$\mathcal{G}_\mu$ \emph{if and only if} $\mu > \frac12$, so it cannot be global minimizer for any $\mu < \frac12$. The uniqueness is up to phase and space rotations, both giving the same family of functions. Furthermore,~$\phi_0$ is \emph{a} global minimizer for $\mu = \frac12$, but it is not unique: $\mathcal{G}_{\frac12}(\phi_0)=\mathcal{G}_{\frac12}(\phi_1)=\mathcal{G}_{\frac12}(\psi_b).$ See Theorem~\ref{case_mu_12} for the definitions of the functions~$\phi_1$ and~$\psi_b$.
\end{remark}
We give here the outline of the proof. First, we show that Theorem~\ref{th1.1} is equivalent to
\begin{theorem}\label{th1.2}
The functional
$$
B(u)=4\pi \mathcal H(u)+\frac{1}{4}   \pa{ M(u)P(u)-|Q(u)|^2     }-\frac12 M^2(u)
$$
is non-negative, where we recall the expressions of the quantities $\mathcal{H}, M, P$ and $Q$:
$$\mathcal{H}(u) = \frac14 \int_\C |u(z)|^4 \dd L(z), \qquad M(u) = \int_\C |u(z)|^2\dd L(z),$$
$$P(u) = \int_\C (|z|^2-1)|u(z)|^2\dd L(z), \qquad Q(u) = \int_\C z|u(z)|^2\dd L(z),$$
where~$L$ stands for the Lebesgue measure on~$\C$.
\end{theorem}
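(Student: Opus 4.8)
The plan is to diagonalize everything in the special Hermite basis $\phi_n(z)=\frac{1}{\sqrt{\pi\,n!}}\,z^n e^{-|z|^2/2}$, an orthonormal basis of $\mathcal E$. Writing $u=\sum_{n\ge0}c_n\phi_n$, an elementary computation gives $M(u)=\sum_n\abs{c_n}^2$, $P(u)=\sum_n n\abs{c_n}^2$ and $Q(u)=\sum_n\sqrt{n+1}\,c_n\overline{c_{n+1}}$, while the quartic term satisfies the Bargmann--Fock identity $\pi\int_\C\abs{u}^4\,\dd L=\sum_{N\ge0}\frac{N!}{2^{N+1}}\abs{d_N}^2$ with $d_N=\sum_{j+k=N}\frac{c_jc_k}{\sqrt{j!\,k!}}$. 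The structural observation (this is the change of variables alluded to above) is that every monomial $c_{n_1}\overline{c_{n_2}}c_{n_3}\overline{c_{n_4}}$ occurring in $B(u)$ conserves the total index $n_1+n_3=n_2+n_4$. Hence $B$ splits as a direct sum $B(u)=\sum_{N\ge0}B_N$ over sectors indexed by $N$, and each $B_N$ is a Hermitian quadratic form in the \emph{pair variables} $P^{(N)}_j:=c_jc_{N-j}$, $0\le j\le N$, which are palindromic: $P^{(N)}_j=P^{(N)}_{N-j}$.

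In sector $N$ one finds $B_N=\langle P^{(N)},A^{(N)}P^{(N)}\rangle$ with $A^{(N)}=\frac{N!}{2^{N+1}}\,vv^{*}+\pa{\frac N8-\frac12}I-\frac18\,T$, where $v_j=1/\sqrt{j!\,(N-j)!}$ comes from the $\mathcal H$ term, $T$ is the tridiagonal (Jacobi) matrix with $T_{j,j+1}=T_{j+1,j}=\sqrt{(j+1)(N-j)}$ coming from $-\frac14\abs{Q}^2$, and the scalar part collects $-\frac12 M^2$ and $\frac14 MP$. All three pieces are centrosymmetric, i.e.\ invariant under the reversal $J:j\mapsto N-j$, hence so is $A^{(N)}$. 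Since the attainable $P^{(N)}$ always lie in the symmetric subspace $\{P:JP=P\}$, it suffices to show that the restriction $A^{(N)}\big|_{\mathrm{sym}}$ is positive semidefinite. This is exactly where the centrosymmetric structure enters: it block-diagonalizes $A^{(N)}$ into a symmetric and an antisymmetric part, and only the former must be controlled.

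The heart of the argument is a spectral identification: $T$ is precisely twice the angular-momentum operator $J_x$ in the spin-$N/2$ representation, so its spectrum is $\{-N,-N+2,\dots,N\}$ with eigenvectors of alternating reversal-parity. Consequently $D:=(\frac N8-\frac12)I-\frac18 T$ has eigenvalues $\frac i4-\frac12$, $i=0,\dots,N$, and on the symmetric block its \emph{unique} negative eigenvalue is $-\frac12$, attained at the top ($J_x$-coherent) eigenvector, which is proportional to $\big(\sqrt{\binom Nj}\big)_j$ — that is, exactly to $v$. Therefore the rank-one term $\frac{N!}{2^{N+1}}vv^{*}$ shifts only this one eigenvalue, by $\frac{N!}{2^{N+1}}\|v\|^2=\frac{N!}{2^{N+1}}\cdot\frac{2^N}{N!}=\frac12$, moving it from $-\frac12$ to $0$, while every other symmetric eigenvalue, being $\frac i4-\frac12$ for even $i\ge2$, is already nonnegative. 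Hence $A^{(N)}\big|_{\mathrm{sym}}\succeq0$ for every $N$, and summing over sectors yields $B(u)\ge0$.

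I expect the main obstacle to be this exact alignment between $v$ and the negative eigendirection of $D$, together with the bookkeeping showing that the two negative eigenvalues of $D$ (namely $-\frac12$ and $-\frac14$) split so that one lands in the symmetric and the other in the antisymmetric block: it is essential that the harmful $-\frac14$ sits in the antisymmetric part, which the palindromic $P^{(N)}$ never sees, and that the coefficient $\frac{N!}{2^{N+1}}$ is precisely tuned so that $\|v\|^2$ cancels $-\frac12$. A secondary, more technical point is to justify the sector decomposition of this infinite quadratic form for general $u\in\mathcal E$ with $M(u)=1$, which I would handle by first reducing to finite linear combinations of the $\phi_n$ and passing to the limit.
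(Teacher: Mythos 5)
Your proof is correct, and while it shares the same skeleton as the paper's (sector decomposition into the palindromic pair variables, centrosymmetry, reduction to the symmetric block, and the special role of the rank-one Hamiltonian term $\frac{N!}{2^{N+1}}vv^{*}$), the core positivity argument is genuinely different and considerably slicker. The paper does not diagonalize the tridiagonal part: after the Cantoni--Butler reduction to a half-size matrix $S^{(j)}=T^{(j)}+K^{(j)}$, it invokes Wilkinson's interlacing for the rank-one update $K^{(j)}$, exhibits the two explicit kernel vectors by hand, and then shows via a Sturm-sequence sign count of the principal minors $\Delta_k(0)$ --- encoded in the generating function $(1+x)^{2p-3}(1-x)^2$ --- that $T^{(j)}$ has at most two nonpositive eigenvalues, which combined with interlacing forces $S^{(j)}\geq 0$. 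Your identification of $T$ as $2J_x$ in the spin-$N/2$ representation instead gives the exact spectrum $\{N-2i\}$ with alternating reversal-parities, so that on the symmetric block the only negative eigenvalue of the diagonal-plus-tridiagonal part is $-\tfrac12$, carried by $v\propto\bigl(\sqrt{\tbinom{N}{j}}\bigr)_j$, and the computation $\frac{N!}{2^{N+1}}\|v\|^2=\tfrac12$ cancels it exactly. (I checked the bookkeeping: your $A^{(N)}$ agrees with the paper's $B^{(N)}$ after the congruence by $\mathrm{diag}(v)$, your kernel vectors match the paper's $v^{(j)},w^{(j)}$, and the low-$N$ cases reproduce $S^{(1)}=S^{(3)}=0$ etc.) What your route buys is a conceptual explanation of why the constants are exactly tuned and why the kernel is exactly two-dimensional in each sector; what it costs is two points you should make explicit in a write-up: a justification of the alternating parity of the $J_x$-eigenvectors under the reversal $j\mapsto N-j$ (e.g.\ via Sturm oscillation theory for Jacobi matrices with positive off-diagonal entries, or by noting the reversal is implemented by $e^{-i\pi J_x}$ up to a global phase), and the density/limiting argument for the infinite sum over sectors, which you correctly flag and which the paper also treats lightly.
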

The main idea is then to study~$B$, which is a homogeneous functional, instead of directly studying the problem~\eqref{min_pb}. This idea comes from~\cite{Clerck-Evnin}, where the authors write~$B$ as an infinite quadratic form, which we need to prove that it is non-negative. The result is announced without analytical proof, but the authors have verified it numerically for the first terms. The difficulty lies in the degeneration of the functional~$B$, limiting the number of methods that can be used, and in particular, excluding perturbation methods.

\begin{remark}
	After writing this article, the author learned that this result was already known in a slightly different form \cite[equation (14)]{LewSei}. In addition, the equality between the energy in the space of the lowest Landau level and the energy of the gaussian $\phi_0$ (see below) is discussed in \cite{LSY}. The method used here to show the result is different and instructive. In particular, we will show that all $B^{(j)}$ terms are non-negative and not just their sum, as is the case in \cite{LewSei}. Moreover, the question of the optimality of the condition $\mu \geq \frac12$ and the transition to a vortex for $\mu = \frac12$ have no equivalent to the knowledge of the author.
\end{remark}

In~\cite{GGT}, the authors prove that~$\phi_0$ is the unique global minimizer of~$\mathcal{G}_\mu$ for any $\mu \geq \sqrt{3}-1$, by studying a different functional, which is 
$$ F_\mu(u)=8\pi \mathcal H(u)+M(u)(\mu P(u)-M(u)).$$
They write $\dis u = \sum_{n=0}^{+\infty} a_n\phi_n,$ with $(\phi_n)$ a Hilbertian basis of $\mathcal{E}$, they discard the majority of the terms, and study the resulting quadratic form. This method can be improved by discarding less terms, nevertheless, it should fail to give the optimal bound on~$\mu$ (which is~$\frac12$), due to real obstructions, such as the degeneration of~$B$. In fact, we will show in the sequel that~$F_{\frac12}$ is non-negative if and only if~$B$ is non-negative.

We also address the case $\mu = \frac12$:
\begin{theorem}\label{case_mu_12}
	Suppose $\mu = \frac12$. Then, the global minimizers of \eqref{min_pb} are, up to phase and space rotations, $ \phi_0, \phi_1, \psi_b$, where
        $$\phi_0(z) = \frac{1}{\sqrt{\pi }}e^{-\frac{|z|^2}{2}},$$
	$$\phi_1(z) := \frac{1}{\sqrt{\pi }}ze^{-\frac{|z|^2}{2}},$$
	and 
	$$ \psi_b(z) := \frac{e^{-\frac12\pa{\frac{b}{1+b^2}}^2}}{\sqrt{\pi(1+b^2)}}\pa{z-\frac{b(2+b^2)}{1+b^2}}e^{\frac{b}{1+b^2}z-\frac{|z|^2}{2}}, \quad 0 \leq b < +\infty.$$
\end{theorem}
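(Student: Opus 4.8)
The plan is to reduce the minimization to the zero set of the functional $B$ from Theorem~\ref{th1.2}, via an exact algebraic identity. Substituting the definitions of $\mathcal H$, $M$, $P$, $Q$ and using $M(u)=1$, one finds
\begin{equation*}
\mathcal G_{1/2}(u) = 2\,B(u) + \tfrac12\abs{Q(u)}^2 + 1 .
\end{equation*}
Since $B\geq 0$ by Theorem~\ref{th1.2} and $\abs{Q(u)}^2\geq 0$, the minimal value is $1$ (attained at $\phi_0$), and the global minimizers are exactly those $u\in\mathcal E$ with $M(u)=1$ satisfying \emph{simultaneously} $B(u)=0$ and $Q(u)=0$. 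The whole problem thus reduces to describing the zero set $\{B=0\}$ together with the normalization $Q(u)=0$.

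The next step is to record the symmetry behaviour of $B$. Whereas $\mathcal G_{1/2}$ is invariant only under phase rotations $T_\gamma$ and space rotations $L_\theta$, the functional $B$ is invariant under the full group generated by $T_\gamma$, $L_\theta$ and the magnetic translations $R_\alpha$: indeed $\mathcal H$ and $M$ are $R_\alpha$-invariant, and from $Q(R_\alpha u)=Q(u)-\alpha M(u)$ and $P(R_\alpha u)=P(u)-2\Re\pa{\alpha\,\ov{Q(u)}}+\abs{\alpha}^2 M(u)$ one checks that $M(u)P(u)-\abs{Q(u)}^2$ is also $R_\alpha$-invariant. Consequently $\{B=0\}$ is a union of full symmetry orbits, and the key structural claim I would establish is
\begin{equation*}
\{\,u\in\mathcal E : B(u)=0\,\} = \{\,R_\alpha(c_0\phi_0+c_1\phi_1) : \alpha\in\C,\ c_0,c_1\in\C\,\},
\end{equation*}
that is, the zero set of $B$ is precisely the magnetic-translation orbit of $\mathrm{span}\{\phi_0,\phi_1\}$.

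The inclusion ``$\supseteq$'' is a finite computation: $B$ vanishes on $\mathrm{span}\{\phi_0,\phi_1\}$ (only the lowest modes appear), and vanishing propagates along the orbit by the invariance just noted. The reverse inclusion ``$\subseteq$'' is the heart of the matter and, I expect, the main obstacle. For it I would expand $u=\sum_{n\ge0}a_n\phi_n$ and use the decomposition $B=\sum_m B^{(m)}$ into the angular-momentum sectors $n_1+n_3=n_2+n_4=m$ coming from the proof of Theorem~\ref{th1.2}, where each $B^{(m)}$ is a non-negative Hermitian quadratic form in the paired variables $a_k a_{m-k}$, represented by a centrosymmetric matrix. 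Since every $B^{(m)}\geq 0$, the condition $B(u)=0$ is equivalent to $B^{(m)}(u)=0$ for \emph{all} $m$, i.e. to membership of each paired-variable vector in $\ker B^{(m)}$. The difficulty is that these kernels are non-trivial for all $m$ (a generic element of the orbit has every $a_n\neq 0$), so one must compute the kernel of each centrosymmetric form, and then solve the resulting infinite system of quadratic relations among the products $a_k a_{m-k}$ \emph{jointly over all} $m$, showing that its only solutions are the explicit coefficient sequences of $R_\alpha(c_0\phi_0+c_1\phi_1)$. This uniform-in-$m$ kernel analysis, where the centrosymmetric structure is exploited, is the crux.

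Finally, granting the structural claim, I would impose the two remaining constraints. Writing a candidate minimizer as $u=R_\alpha(c_0\phi_0+c_1\phi_1)$ with $\abs{c_0}^2+\abs{c_1}^2=1$ (so that $M(u)=1$), the relation $Q(R_\alpha v)=Q(v)-\alpha M(v)$ gives $Q(u)=c_0\ov{c_1}-\alpha$, whence $Q(u)=0$ forces $\alpha=c_0\ov{c_1}$. Hence the minimizers are exactly $R_{c_0\ov{c_1}}(c_0\phi_0+c_1\phi_1)$ on the sphere $\abs{c_0}^2+\abs{c_1}^2=1$. Using $L_\theta\phi_n=e^{in\theta}\phi_n$ together with $L_\theta R_\alpha=R_{e^{-i\theta}\alpha}L_\theta$, the phase and space rotations act as $(c_0,c_1)\mapsto(e^{i\gamma}c_0,e^{i(\gamma+\theta)}c_1)$ while preserving the normalization $\alpha=c_0\ov{c_1}$; one may therefore reduce to $c_0,c_1$ real with fixed signs, leaving a single parameter. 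Parametrizing the reduced pair as $(c_0,c_1)=(-b,1)/\sqrt{1+b^2}$ and performing the magnetic translation $R_{-b/(1+b^2)}$ explicitly reproduces exactly the stated formula for $\psi_b$, with $\psi_0=\phi_1$; the degenerate endpoint $c_1=0$ (the limit $b\to+\infty$) gives $\phi_0$. This recovers the announced list $\phi_0,\phi_1,\psi_b$ up to phase and space rotations and completes the proof.
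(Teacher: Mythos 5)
Your reduction is correct and coincides with the paper's: with $M(u)=1$ one has $\mathcal G_{1/2}(u)-1=F_{1/2}(u)=2E(u)=2B(u)+\tfrac12|Q(u)|^2$, so the minimizers are exactly the normalized solutions of $B(u)=0$ and $Q(u)=0$. Your endgame is also correct and in fact more self-contained than the paper's: computing $Q\pa{R_\alpha(c_0\phi_0+c_1\phi_1)}=c_0\ov{c_1}-\alpha$, reducing by the symmetries, and checking that $R_{-b/(1+b^2)}$ applied to $(-b\phi_0+\phi_1)/\sqrt{1+b^2}$ is literally $\psi_b$ replaces the paper's appeal to the classification of $MP$-stationary waves from G\'erard--Germain--Thomann at that point.

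However, there is a genuine gap at the step you yourself flag as ``the crux'': the inclusion $\{B=0\}\subseteq\{R_\alpha(c_0\phi_0+c_1\phi_1)\}$ is asserted together with a description of what one \emph{would} do (identify $\ker B^{(j)}$, then solve the quadratic relations among the products $a_ka_{j-k}$ jointly over all $j$), but it is not carried out, and this is precisely the content of the paper's Lemma~\ref{cas_egalite_B}. Two ingredients are missing. First, one must know that the kernel of $B^{(j)}$ restricted to symmetric vectors is \emph{exactly} $\mathrm{span}\{v^{(j)},w^{(j)}\}$ with $v^{(j)}_k=1/(k!(j-k)!)$ and $w^{(j)}_k=k(j-k)/(k!(j-k)!)$; this does follow from the eigenvalue analysis of Section~\ref{section_proof} (multiplicity at least two by inspection, at most two by the Sturm-sequence count), but it has to be invoked explicitly. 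Second, and more seriously, the resulting system $b_kb_{j-k}=\alpha_j+k(j-k)\beta_j$ (with $b_k=\sqrt{k!}\,a_k$) is not solved ``jointly over all $m$'' by brute force in the paper: the device that makes it tractable is to use the $R_\alpha$-invariance of $B$ to normalize first --- either $u$ never vanishes, or one translates a zero of $u$ to the origin so that $a_0=0$, whence $\alpha_j=0$ for every $j$ and the system collapses to $b_kb_{j-k}=k(j-k)\beta_j$, which is then resolved by a short case analysis on which $b_k$ vanish (all $b_k=0$; some $b_{k_0}=0$ with $k_0\geq 2$, forcing $k_0=2$ and $u\propto\phi_1e^{cz}$; or no $b_k=0$, giving a geometric sequence $b_k/k=\lambda\alpha^k$ and $u\propto\phi_1e^{\alpha z}$ up to translation). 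Without this normalization, or an equivalent one, your plan does not yet constitute a proof of the structural claim, and hence of the theorem.
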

We stress that the family~$(\psi_b)$ links up~$\phi_0$ and~$\phi_1$: we have, for all $z\in\C,$ $ \psi_b(z) \xrightarrow[b \to 0]{ }\phi_1(z) $ and $ \psi_b(z) \xrightarrow[b \to +\infty]{ }-\phi_0(z). $

Then the following question arises: what happens in the case $ \mu < \frac12?$ We have, for any $\mu < \frac12$, 
\begin{align*}
    \mathcal{G}_\mu(\phi_1) &= \frac12 + \mu, \\
    \mathcal{G}_\mu(\psi_b) &= 1 + \pa{\mu - \frac{1}{2}} \frac{1}{(1+b^2)^2},\\
    \mathcal{G}_\mu(\phi_0) &= 1,
\end{align*}
so that, for any $\mu < \frac12$, 
$$ \mathcal{G}_\mu(\phi_1) < \mathcal{G}_\mu(\psi_b) < \mathcal{G}_\mu(\phi_0).$$
Moreover, by~\cite{GGT}, the function~$\phi_1$ is a strict local minimizer for any $\mu \in (5/32,1/2)$, so that it is a good candidate for being a global minimizer of~\eqref{min_pb} for some $\mu < \frac12$. We will prove in Sect.~\ref{section_cas_phi1} the following result:
\begin{theorem}\label{min_phi1}
	There exists $\mu_0 \in [5/32,1/2)$ such that, for all $\mu \in (\mu_0,1/2), \, \phi_1$ is the unique global minimizer of~\eqref{min_pb} up to space and phase rotations.
\end{theorem}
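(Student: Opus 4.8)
The plan is to pin down the threshold $\mu_0$ as the point where $\phi_1$ stops being the global minimizer, and to reduce the whole statement to the positivity of a single homogeneous ratio. Writing $\mu=\tfrac12-\delta$ with $\delta\in(0,\tfrac12)$, and using $\mathcal G_\mu=\mathcal G_{1/2}-\delta P$ together with $\mathcal G_\mu(\phi_1)=\tfrac12+\mu=1-\delta$, the inequality $\mathcal G_\mu(u)\ge\mathcal G_\mu(\phi_1)$ on $\{M(u)=1\}$ is equivalent to
$$ L(u):=\mathcal G_{1/2}(u)-1\ \ge\ \delta\,(P(u)-1)=:\delta\,R(u).$$
By Theorem~\ref{case_mu_12} we have $L\ge 0$ on the constraint, and among the minimizers of $\mathcal G_{1/2}$ the quantity $R$ is $\le 0$, vanishing \emph{only} at $\phi_1$: indeed $P(\phi_0)=0$, $P(\phi_1)=1$ and $P(\psi_b)=(1+b^2)^{-2}$. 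I would therefore introduce
$$ \delta^\star:=\inf\paa{\tfrac{L(u)}{R(u)}\ :\ M(u)=1,\ R(u)>0},\qquad \mu_0:=\tfrac12-\delta^\star,$$
so that the theorem reduces to proving $\delta^\star>0$, together with short arguments for uniqueness and for the bound $\mu_0\ge\tfrac5{32}$.

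To show $\delta^\star>0$ I would argue by contradiction from a sequence $u_n$ with $M(u_n)=1$, $R(u_n)>0$ and $L(u_n)/R(u_n)\to 0$. The elementary coercivity $L(u)\ge\tfrac12R(u)-\tfrac12$ (from $\mathcal H\ge 0$, i.e. $\mathcal G_{1/2}\ge\tfrac12P$) forbids $R(u_n)\to\infty$, since there $L/R\to\tfrac12$; hence $R(u_n)$ is bounded and $L(u_n)\to 0$, so $(u_n)$ is a minimizing sequence for $\mathcal G_{1/2}$. The compactness underlying the existence theory (Theorem~\ref{thm1}), which stems from the confinement $\mathcal G_{1/2}\gtrsim\int_{\C}|z|^2|u|^2$ and the compact resolvent of the number operator on $\mathcal E$, gives — after a subsequence and a symmetry — $u_n\to u_\star$ in $L^2$ and in energy, with $u_\star$ a minimizer of $\mathcal G_{1/2}$. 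Continuity of $R$ and $R(u_n)>0$ force $R(u_\star)\ge 0$, so by the dichotomy above $u_\star=\phi_1$ up to rotations.

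The remaining, and hardest, step is to contradict $L(u_n)/R(u_n)\to 0$ when $u_n\to\phi_1$. After fixing the phase so that $v_n:=u_n-\phi_1\perp i\phi_1$ and using $M(u_n)=1$, Taylor expansion yields $L(u_n)=Q_L(v_n)+o(\|v_n\|^2)$ and $R(u_n)=Q_R(v_n)+o(\|v_n\|^2)$, where $Q_L,Q_R$ are the constrained Hessians at $\phi_1$ of $\mathcal G_{1/2}$ and of $P$. Two structural facts drive the analysis. First, because $\phi_1$ is a stationary solution for \emph{every} $\mu$ the Lagrange multiplier is affine in $\mu$ and $P,M$ are quadratic, so the constrained Hessian of $\mathcal G_\mu$ is exactly $Q_L-\delta Q_R$; the statement of~\cite{GGT} that $\phi_1$ is a strict local minimizer for $\mu\in(\tfrac5{32},\tfrac12)$ then reads $Q_L-\delta Q_R\ge c_\delta\|\cdot\|^2$ with $c_\delta>0$ for every $\delta\in(0,\tfrac{11}{32})$. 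Second — and this is the delicacy of working \emph{at} $\mu=\tfrac12$ — the form $Q_L$ itself is degenerate, its kernel (mod phase) being the tangent direction $w=\partial_b\psi_b|_{b=0}$ of the bifurcating family. What rescues the argument is that along this very direction $P(\psi_b)=(1+b^2)^{-2}$ gives $Q_R(w)<0$: the unique direction in which I lose control of $L$ is one where $R$ is strictly negative, hence invisible to the ratio on $\{R>0\}$. Concretely, fixing any $\delta_1\in(0,\tfrac{11}{32})$ I get $L(u_n)\ge\delta_1R(u_n)+(c_{\delta_1}+o(1))\|v_n\|^2\ge\delta_1R(u_n)$ for large $n$, so $L(u_n)/R(u_n)\ge\delta_1>0$, the contradiction.

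Once $\delta^\star>0$ is established, the conclusion follows quickly. For $\mu\in(\mu_0,\tfrac12)$, i.e. $\delta<\delta^\star$, equality $\mathcal G_\mu(u)=\mathcal G_\mu(\phi_1)$ means $L(u)=\delta R(u)$: if $R(u)\le 0$ this forces $L(u)=R(u)=0$, hence $u=\phi_1$ up to rotations, while if $R(u)>0$ it contradicts $L/R\ge\delta^\star>\delta$; this yields both global minimality and uniqueness. The bound $\mu_0\ge\tfrac5{32}$ is automatic, since for $\mu<\tfrac5{32}$ the function $\phi_1$ is not even a local minimizer by~\cite{GGT} and a fortiori not a global one. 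Hence $\mu_0\in[\tfrac5{32},\tfrac12)$. I expect $\delta^\star$ to be genuinely non-explicit — the binding competitors may well be the ``far'' ones seen only through compactness — which is precisely why the theorem asserts the existence of $\mu_0$ rather than its value.
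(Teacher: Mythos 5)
Your argument is correct in substance and, once unpacked, rests on the same four ingredients as the paper's proof: the classification of global minimizers at $\mu=\frac12$ (Theorem~\ref{case_mu_12_bis}); the fact that any competitor beating $\phi_1$ below $\mu=\frac12$ must have $P>1$ (your restriction to $\{R>0\}$ is exactly the content of the paper's Lemma~\ref{lemme_min_P}, obtained there from the same affine-in-$\mu$ structure); compactness of minimizing sequences, driven by the a priori bound on $P$ and the holomorphic structure of $\E$; and the strict local minimality of $\phi_1$ for $\mu\in(\frac{5}{32},\frac12)$. The packaging differs --- you minimize the ratio $L/R$ and show $\delta^\star>0$, while the paper follows a sequence of actual global minimizers $u_{\mu_n}$ as $\mu_n\to\frac12$ and identifies its limit --- but these are interchangeable. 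Two points deserve correction. First, ``continuity of $R$'' is false for the $L^2$ topology alone ($P$ is only lower semicontinuous); as in the paper, one recovers $P(u_n)\to P(u_\star)$ from $\mathcal{H}(u_n)\to\mathcal{H}(u_\star)$ (Carlen's estimate) combined with $\mathcal{G}_{1/2}(u_n)\to 1=\mathcal{G}_{1/2}(u_\star)$; your phrase ``in energy'' covers this, but it should be said. Second, the step you call the hardest is a detour, and your inference from ``strict local minimizer'' to coercivity of the constrained Hessian, $Q_L-\delta Q_R\ge c_\delta\|\cdot\|^2$, is logically backwards: strict local minimality does not imply a coercive second variation. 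None of the Hessian analysis is needed, because for $\mu_1=\frac12-\delta_1\in(\frac{5}{32},\frac12)$ one has the exact identity $L(u)-\delta_1R(u)=\mathcal{G}_{\mu_1}(u)-\mathcal{G}_{\mu_1}(\phi_1)$, so the qualitative strict local minimality of $\phi_1$ for $\mathcal{G}_{\mu_1}$ on a \emph{fixed} neighbourhood already yields $L\ge\delta_1 R$ there, hence $L/R\ge\delta_1$ on $\{R>0\}$ near $\phi_1$ --- which is precisely how the paper concludes. With that substitution your proof is complete and coincides with the paper's up to reformulation.
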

The fact that such a~$\mu_0$ satisfies $\mu_0 > \frac{5}{32}$ comes from \cite[Proposition~7.4,~$(iii)$]{GGT}: $\phi_1$ is a strict local minimizer if and only if $\frac{5}{32} < \mu < \frac12.$ Moreover, this implies that there exists a~$\mu_1$ satisfying: for all $\mu \in (\mu_1,1/2), \, \phi_1$ is the unique (up to phase and space rotations) global minimizer, and~$\phi_1$ is not a global minimizer for any $\mu < \mu_1$. Then, by the classification result~\cite[Theorem~6.1]{GGT}, for any $\mu < \mu_1$, any global minimizer has an infinite number of zeros. To summarise, we state:
\begin{theorem}\label{resume_th}
	Consider for $\mu>0$ the minimization problem
        \begin{equation*}
            \min_{\substack{u\in\E \\ M(u)=1}} \mathcal{G}_\mu(u) \qquad \text{with} \qquad \mathcal{G}_\mu =  8\pi \mathcal{H} + \mu P.
        \end{equation*}
    \begin{enumerate}
        \item  For any $ \mu>0$, there \emph{exists} a global minimizer.
        \item  For $\mu>\frac 12$, the Gaussian $\phi_0$ is the \emph{unique global} minimizer of $\mathcal{G}_\mu$ over $\{ u \in \mathcal{E}, M(u) = 1 \}$, up to phase and space rotations.
        \item For $\mu = \frac12$, the global minimizers are, up to phase and space rotations, $\phi_0, \phi_1$ and the family $\{\psi_b\}_{b\geq 0}$.
        \item There exists $\mu_0 \in [\frac{5}{32}, \frac12)$ such that:
        \begin{enumerate}
            \item for all $\mu \in (\mu_0,\frac12),$ $\phi_1$ is the unique global minimizer, up to phase and space rotations,
            \item for all $\mu < \mu_0$, \emph{any global} minimizer has an infinity of zeros.
        \end{enumerate}
        \item  For $\mu \in (0,\frac{5}{32})$, \emph{any local or global} minimizer of $\mathcal{G}_\mu$ has an infinity of zeros.
        \end{enumerate}
\end{theorem}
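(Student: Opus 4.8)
The plan is to obtain Theorem~\ref{resume_th} by assembling the results established in the previous sections, the only genuinely new ingredient being the identification of the sharp threshold $\mu_0$ in item (4) and the passage to ``infinitely many zeros'' below it. Items (1), (2) and (3) require no work beyond citation: (1) is the existence Theorem~\ref{thm1}, (2) is Theorem~\ref{th1.1}, and (3) is Theorem~\ref{case_mu_12}.

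For item (4), the crucial observation I would exploit is that the value function
$$ m(\mu) := \min_{\substack{u\in\E \\ M(u)=1}} \mathcal{G}_\mu(u) $$
is \emph{concave} in $\mu$, being the pointwise infimum of the affine maps $\mu \mapsto 8\pi\mathcal H(u)+\mu P(u)$. Since $\mathcal{G}_\mu(\phi_1)=\tfrac12+\mu$ is itself affine, satisfies $m(\mu)\le \mathcal{G}_\mu(\phi_1)$ on $(0,\tfrac12]$ trivially, and meets $m$ at $\mu=\tfrac12$ (by (3), where $\mathcal G_{1/2}(\phi_1)=1=m(\tfrac12)$), the contact set $\{\mu : m(\mu)=\tfrac12+\mu\}=\{\mu : \phi_1 \text{ is a global minimizer}\}$ is an interval: if equality holds at two points, concavity forces $m\ge\tfrac12+\mu$ on the chord joining them, hence equality throughout. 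I would then define $\mu_0:=\inf\{\mu : \phi_1 \text{ is a global minimizer of }\mathcal G_\mu\}$ as the left endpoint of this interval. Theorem~\ref{min_phi1} shows the set contains a nonempty interval abutting $\tfrac12$, so $\mu_0<\tfrac12$; and since $\phi_1$ fails to be even a local minimizer for $\mu\le\tfrac{5}{32}$ by \cite[Proposition~7.4,~$(iii)$]{GGT}, we get $\mu_0\ge\tfrac{5}{32}$. By construction $\phi_1$ is a global minimizer for every $\mu\in(\mu_0,\tfrac12)$, which together with the uniqueness of Theorem~\ref{min_phi1} gives (4a).

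To finish (4b) and to prove (5) I would invoke the classification \cite[Theorem~6.1]{GGT}, according to which the only minimizers with finitely many zeros are, up to phase and space rotations, $\phi_0$ and $\phi_1$. For $\mu<\mu_0$, neither is a global minimizer: $\phi_1$ by the definition of $\mu_0$ as the left endpoint of the contact interval, and $\phi_0$ because $\mathcal G_\mu(\phi_0)=1>\tfrac12+\mu\ge m(\mu)$. As a global minimizer exists by (1), it must have infinitely many zeros, proving (4b). For (5), when $\mu\in(0,\tfrac{5}{32})$ the function $\phi_1$ is not a local minimizer by \cite[Proposition~7.4,~$(iii)$]{GGT}, while $\phi_0$ is a strict local minimizer only for $\mu>\tfrac12$ by the optimality statement in Theorem~\ref{th1.1}; hence no finite-zero configuration is a local—let alone global—minimizer, so every local or global minimizer has infinitely many zeros.

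The main obstacle, and the only point beyond pure bookkeeping, is the \emph{sharpness} of $\mu_0$: the monotonicity statement that once $\phi_1$ is a global minimizer at some $\mu^\ast<\tfrac12$ it remains one on all of $(\mu^\ast,\tfrac12)$. I expect the concavity-of-$m$ argument above to settle this cleanly, but care is needed because the contact set is naturally described in terms of \emph{global} minimality (existence), whereas Theorem~\ref{min_phi1} is phrased in terms of \emph{unique} global minimality; reconciling the two—so that the single number $\mu_0$ serves simultaneously as the uniqueness threshold in (4a) and the non-minimality threshold in (4b)—is the delicate step. A secondary subtlety is that (4b) and (5) rely on \cite[Theorem~6.1]{GGT} genuinely exhausting all finite-zero minimizers, so that eliminating $\phi_0$ and $\phi_1$ suffices to force infinitely many zeros.
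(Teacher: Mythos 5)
Your assembly of items (1)--(3) and (5) from Theorems \ref{thm1}, \ref{th1.1}, \ref{case_mu_12} and the results of \cite{GGT} is exactly what the paper does, and your treatment of item (4) is, at bottom, the same mechanism as the paper's: the concavity of the value function $m(\mu)$ is equivalent to the affine-comparison argument of Lemma \ref{lemme_min_P} (for fixed $u$ the map $\mu\mapsto\mathcal{G}_\mu(u)$ is a line of slope $P(u)$, and optimality of $\phi_1$ at $\mu=\frac12$ forces $P(u)>P(\phi_1)$ for any competitor beating $\phi_1$ at some $\mu_1<\frac12$), so the contact set is an interval and $\mu_0$ can be taken as its left endpoint. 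Deducing (4b) and (5) from the classification of finite-zero stationary waves is also the paper's route.

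The one genuine gap is uniqueness in (4a). You define $\mu_0$ as the infimum of the set where $\phi_1$ is a global minimizer, which yields mere global minimality on all of $(\mu_0,\frac12)$, and then appeal to ``the uniqueness of Theorem \ref{min_phi1}'' --- but the threshold produced by that theorem is a priori a possibly larger number $\mu_0'$, so uniqueness on $(\mu_0,\mu_0']$ is not covered; you flag this yourself but do not close it. The paper closes it in the proof of Theorem \ref{min_phi1_bis}: if $\mathcal{G}_{\mu_3}(u)=\mathcal{G}_{\mu_3}(\phi_1)$ for some $\mu_3\in(\mu_0,\frac12)$, then $P(u)\ge P(\phi_1)=1$ by Remark \ref{rk_min_P}; if $P(u)>1$ the two lines cross at $\mu_3$ and $u$ strictly beats $\phi_1$ for every $\mu<\mu_3$, contradicting $\mu_3>\mu_0$; if $P(u)=1$ the lines coincide, so $u$ is also a global minimizer at $\mu=\frac12$, hence $u\in\{\phi_0,\phi_1,\psi_b\}$ up to symmetries by Theorem \ref{case_mu_12_bis}, and $P(u)=1$ rules out $\phi_0$ (angular momentum $0$) and $\psi_b$ with $b>0$ (angular momentum $\frac{1}{1+b^2}<1$), leaving $\phi_1$. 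Adding this argument makes your proof complete and essentially identical to the paper's.
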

\begin{remark}
	The case $\mu = \mu_0$ is still an open problem. It remains unclear what to expect, but deserves further investigation that might lead to further work.
\end{remark}

\subsubsection{Relation to the semi-classical approach}
We also study the number of zeros of minimizers for the~\eqref{min_pb} problem. In particular, it is physically relevant to investigate whether or not a solution has an finite or infinite number of zeros, since a zero represents the position of a vortex for the condensate. We now turn into semi-classical coordinates, to be able to compare our results to those of mathematical physics literature, in particular~\cite{ABN}.
So, we fix $h \in (0,1)$ a small parameter. We work on the space 
$$ \mathcal{E}_h = \{ u(z)=e^{-\frac{|z|^2}{2h}}f(z) , \;f \;\text{holomorphic}\} \cap L^2(\C),$$
and define the energy 
$$ E_h(u) = \int_{\C} \pa{|z|^2|u(z)|^2+\frac{Na\Omega_h^2}{2}|u(z)|^4}\dd L(z), \qquad N,a>0,\; \Omega_h=\sqrt{1-h^2}.$$
We consider the mass $\dis M(u) = \int_\C|u(z)|^2 \dd L(z),$ and the minimization problem
\begin{equation}\label{min_pb_h}
\min_{\substack{u \in \mathcal{E}_h \\M(u) =1}} E_h(u). 
\end{equation}
This problem is first addressed in \cite[Theorem 1.2]{ABN}. In this article, the authors find conditions on the parameter $h$ for which the solution of \eqref{min_pb_h} has infinitely many zeros. In \cite{GGT}, the authors improve this last result, by giving an better range of the parameter $h$, and without any condition on the Lagrange multiplier associated to the problem.
We are here able again to weaken their conditions. Our result states as follow:
\begin{theorem}\label{th_semi_classical}
	We denote by $\phi_{0,h}$ and $\phi_{1,h}$ the first two Hermite functions in semi-classical coordinates, that is to say
    \begin{align*}
        \phi_{0,h}(z) &:= \frac{1}{\sqrt{\pi h}}e^{-\frac{|z|^2}{2h}},\\
        \phi_{1,h}(z) &:= \frac{1}{\sqrt{\pi h}}\frac{z}{\sqrt{h}}e^{-\frac{|z|^2}{2h}}.
    \end{align*}
    We set $\kappa_\infty = \frac{5}{32}$ and $\kappa_0 = \frac{1}{2}.$
    \begin{enumerate}
        \item Assume that 
        \begin{equation*}
            h > \sqrt{\kappa_0 \frac{Na\Omega_h^2}{4\pi}}.
        \end{equation*}
        Then the Gaussian $\phi_{0,h}$ is the unique global minimizer of \eqref{min_pb_h}, up to symmetries. Furthermore, this bound is optimal, and we can compute
        \begin{equation*}
            E_h(\phi_{0,h}) = \frac{Na\Omega_h^2}{4\pi h} + h.
        \end{equation*}
        \item Assume that 
        \begin{equation*}
            h < \sqrt{\kappa_\infty \frac{Na\Omega_h^2}{4\pi}}.
        \end{equation*}
        Then any minimizer (local or global) of the problem \eqref{min_pb_h} has an infinite number of zeros.
        \item There exists $\kappa_1 \in (\kappa_\infty,\kappa_0)$ such that, for any $h$ satisfying
        \begin{equation*}
            \sqrt{\kappa_1 \frac{Na\Omega_h^2}{4\pi}} < h < \sqrt{\kappa_0 \frac{Na\Omega_h^2}{4\pi}},
        \end{equation*}
        the function $\phi_{1,h}$ is the unique global minimizer of \eqref{min_pb_h}, up to symmetries. We compute :
        \begin{equation*}
            E_h(\phi_{1,h}) = \frac{Na\Omega_h^2}{8\pi h} + 2h.
        \end{equation*}
    \end{enumerate}
\end{theorem}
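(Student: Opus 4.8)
The plan is to reduce the semi-classical problem \eqref{min_pb_h} to the already-analysed problem \eqref{min_pb} by a dilation. First I would introduce the change of variables
$$v(w) := \sqrt{h}\, u\pa{\sqrt{h}\, w},$$
which maps $\mathcal{E}_h$ bijectively onto $\mathcal{E}$, preserves the mass (so it maps the constraint $M(u)=1$ to $M(v)=1$), and, being a dilation of $\C$ composed with multiplication by a constant, preserves the number of zeros of the holomorphic factor. Thus a minimizer of \eqref{min_pb_h} with a given zero count corresponds to a minimizer of \eqref{min_pb} with the same zero count, and conversely.

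Next I would compute how the energy transforms. Using $\dd L(z) = h\, \dd L(w)$ and $|z|^2 = h|w|^2$ under $z = \sqrt{h}\, w$, a direct computation of the two terms of $E_h$ (the first giving $h(P(v)+M(v))$ and the second $\tfrac{2Na\Omega_h^2}{h}\mathcal{H}(v)$) yields, for $M(u)=1$,
$$E_h(u) = \frac{Na\Omega_h^2}{4\pi h}\, \mathcal{G}_\mu(v) + h, \qquad \mu = \mu(h) := \frac{4\pi h^2}{Na\Omega_h^2},$$
where $\mathcal{G}_\mu = 8\pi\mathcal{H} + \mu P$ is exactly the functional of \eqref{min_pb}, and the additive constant $h$ comes from $M(v)=1$. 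Since the prefactor $\tfrac{Na\Omega_h^2}{4\pi h}$ is a positive constant independent of $v$, minimizing $E_h$ over $\{u\in\mathcal{E}_h : M(u)=1\}$ is equivalent to minimizing $\mathcal{G}_\mu$ over $\{v\in\mathcal{E} : M(v)=1\}$, with minimizers corresponding through the dilation.

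The third step is the dictionary between the two thresholds. For any $\kappa>0$ one has $\mu(h)\gtrless\kappa$ if and only if $h\gtrless\sqrt{\kappa\,\frac{Na\Omega_h^2}{4\pi}}$ (a purely algebraic rearrangement, valid even though $\Omega_h$ depends on $h$), and $h\mapsto\mu(h)$ is an increasing bijection from $(0,1)$ onto $(0,+\infty)$. Hence: for case (1), the condition $h>\sqrt{\kappa_0\frac{Na\Omega_h^2}{4\pi}}$ with $\kappa_0=\tfrac12$ is equivalent to $\mu>\tfrac12$, so Theorem~\ref{th1.1} gives $v=\phi_0$ as the unique global minimizer, whence $u=\phi_{0,h}$ is the unique global minimizer of \eqref{min_pb_h}, the optimality of the bound being inherited from Theorem~\ref{th1.1}; for case (2), $h<\sqrt{\kappa_\infty\frac{Na\Omega_h^2}{4\pi}}$ with $\kappa_\infty=\tfrac{5}{32}$ is equivalent to $\mu<\tfrac{5}{32}$, so Theorem~\ref{resume_th}(5) forces any minimizer to have infinitely many zeros, a property preserved by the dilation; for case (3), taking $\kappa_1:=\mu_0$ from Theorem~\ref{min_phi1}, the range $\sqrt{\kappa_1\frac{Na\Omega_h^2}{4\pi}}<h<\sqrt{\kappa_0\frac{Na\Omega_h^2}{4\pi}}$ corresponds to $\mu_0<\mu<\tfrac12$, on which $\phi_1$ is the unique global minimizer, so $\phi_{1,h}$ is the unique global minimizer of \eqref{min_pb_h}.

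Finally I would record the energy values. Since $\mathcal{G}_\mu(\phi_0)=1$ and $\mathcal{G}_\mu(\phi_1)=\tfrac12+\mu$, the transformation formula gives $E_h(\phi_{0,h})=\frac{Na\Omega_h^2}{4\pi h}+h$ and, substituting $\mu=\frac{4\pi h^2}{Na\Omega_h^2}$, $E_h(\phi_{1,h})=\frac{Na\Omega_h^2}{8\pi h}+2h$, as claimed. The argument presents no genuine analytical obstacle: the entire substance of the statement is contained in the earlier results for \eqref{min_pb}, and the only care required is in tracking the scaling constants and in verifying that the dilation preserves both the mass constraint and the zero count.
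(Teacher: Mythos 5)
Your proposal is correct and is exactly the argument the paper intends: the paper itself only remarks that Theorem~\ref{th_semi_classical} follows from Theorem~\ref{resume_th} via the rescaling $v(w)=\sqrt{h}\,u(\sqrt{h}\,w)$, and your computation of $E_h(u)=\frac{Na\Omega_h^2}{4\pi h}\mathcal{G}_{\mu}(v)+h$ with $\mu=\frac{4\pi h^2}{Na\Omega_h^2}$, together with the threshold dictionary and the preservation of mass and zero count, supplies precisely the details the paper leaves implicit.
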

This Theorem is the analogous of Theorem~\ref{resume_th} above, with the rescaling $u(z) = \sqrt{h} v(\sqrt{h}z)$. Item~$(ii)$ is already obtained in~\cite{GGT}, but here we weaken the condition of~$(i)$, which was $\kappa_0 = \sqrt{3}-1 > \frac12$. The new condition on $\kappa_0$ is optimal, since $\phi_{0,h}$ is not a local minimum for $h < \sqrt{\kappa_0 \frac{Na\Omega_h^2}{4\pi}}.$ Item $(iii)$ is a novelty.

\subsection{Plan of the paper}

The paper is organised as follows. In Sect.~\ref{section_general} we recall some general background about the Bargmann-Fock space and Eq.~\eqref{lll}. We also give a brief state of art on stationary waves for \eqref{lll}, and recall some notations. Section \ref{section_proof} is devoted to the proof of Theorem~\ref{th1.1}. In Sects.~\ref{section_cas_1_2} and~\ref{section_cas_phi1}, we aim at giving more insights about the minimization problem in general, and prove both Theorems \ref{case_mu_12} and \ref{min_phi1}.


\section{General Results on  \texorpdfstring{\eqref{lll}}{} and Notations}\label{section_general}
In this section, we recall general results and notations for the \eqref{lll} equation. We mainly refer to \cite{GGT} for proofs.

\subsection{Structure of \texorpdfstring{\eqref{lll}}{}}
First recall the cubic \eqref{lll} equation:
\begin{equation*}\tag{LLL}
    \left\{
        \begin{array}{ll}
        i\partial_tu = \Pi(|u|^2u), &(t,z) \in \R \times \C, \\
        u(0,\cdot) = u_0 \in \mathcal{E},&
        \end{array}
    \right.
\end{equation*}
for which
$$ \mathcal{E} = \{ u(z)=e^{-\frac{|z|^2}{2}}f(z) , \;f \;\text{holomorphic}\} \cap L^2(\C).$$
This space is called the Bargmann-Fock space, and $\Pi$ is its orthogonal projector. It is a Banach space for the usual $L^2(\C)$ scalar product, for which we give the Hilbertian basis $(\phi_n)_{n\in\N}$ (see~\cite[Proposition 2.1]{Zhu}) defined by
$$ \phi_n(z) = \frac{1}{\sqrt{\pi n!}}z^ne^{-\frac{|z|^2}{2}}.$$
These functions are called the special Hermite functions.
We denote $L^{2,1}(\C)$ the weighted $L^2(\C)$ space given by the norm 
$$\|f\|_{L^{2,1}} = \|\bra{z}f(z)\|_{L^2}, \qquad \bra{z} = \sqrt{1+|z|^2},$$
and then define 
$$ L_{\mathcal{E}}^{2,1} = \{ u(z)=e^{-\frac{|z|^2}{2}}f(z) , \;f \;\text{holomorphic}\} \cap L^{2,1}(\C).$$

We also define, for any $p \geq 1$ the space
$$ L_{\mathcal{E}}^{p} = \{ u(z)=e^{-\frac{|z|^2}{2}}f(z) , \;f \;\text{holomorphic}\} \cap L^{p}(\C),$$
which is a Banach space. For any real numbers $p,q$ such that $1 \leq p \leq q \leq +\infty$, the space $L_{\mathcal{E}}^{p}$ is embedded in $L_{\mathcal{E}}^{q}$, with the optimal constant (see~\cite{Carlen}) being, for all $u \in \mathcal{E},$
\begin{equation}\label{eq_carlen}
\pa{\frac{q}{2\pi}}^{1/q} \|u\|_{L^q(\C)} \leq \pa{\frac{p}{2\pi}}^{1/p} \|u\|_{L^p(\C)}.
\end{equation}
We call the latter inequality Carlen's estimates.

We recall that the operator $\Pi$ is the orthogonal projector onto $\E$, we compute its Kernel $K$:
$$ \forall (z,\xi)\in\C\times \C, \qquad  K(z,\xi) = \sum_{n=0}^{+\infty} \phi_n(z) \overline{\phi_n}(\xi) = \frac{1}{\pi}e^{\overline{\xi}z}e^{-|\xi|^2/2}e^{-|z|^2/2},$$
which gives the explicit formula for the projector $\Pi$
\begin{equation*}
    [\Pi u](z)=\frac{1}{\pi}e^{-\frac{|z|^2}{2}} \int_{\C} e^{\overline{\xi}z-\frac{|\xi|^2}{2}}u(\xi)\dd L(\xi),
\end{equation*}
where $L$ represents the Lebesgue measure on $\C$.

Throughout the paper, we use the notation $z=x+iy$ for complex variables and use the classical notations $\partial_z$ and $\partial_{\ov{z}}$ of complex analysis:
$$\partial_z=\frac12(\partial_x-i\partial_y), \qquad \partial_{\ov{z}}=\frac12(\partial_x+i\partial_y).$$ 

Note that the \eqref{lll} equation is induced by the Hamiltonian 
\begin{equation*}
\mathcal{H}(u) := \frac14\int_\C|u|^4\dd L, \quad u\in\E.
\end{equation*}
We define the following symmetries, called respectively phase rotations, space rotations, and magnetic translations: 
$$T_\gamma : u(z) \longmapsto e^{i\gamma}u(z) \qquad \gamma \in \mathbb{T},$$
$$L_\theta : u(z) \longmapsto u(e^{i\theta}z) \qquad \theta \in \mathbb{T},$$
$$R_\alpha : u(z) \longmapsto u(z+\alpha)e^{\frac12(\overline{z}\alpha-z\overline{\alpha})} \qquad \alpha\in\C.$$
These three symmetries leave invariant the Hamiltonian $\mathcal{H}$, then by Noether theorem, they are related to invariant quantities by the flow of \eqref{lll}, respectively to the mass $M$, the angular momentum $P$ and the magnetic momentum $Q$:
$$M(u) = \int_\C |u(z)|^2\dd L(z), \qquad P(u) = \int_\C (|z|^2-1)|u(z)|^2\dd L(z), $$
$$Q(u) = \int_\C z|u(z)|^2\dd L(z), $$
for some $u\in L_{\mathcal{E}}^{2,1}.$ Observe that the mass $M$ is invariant by the three symmetries, whereas the angular momentum $P$ is invariant by phase and space rotations, but not magnetic translations:

\begin{equation}\label{action_trans_P}
	P(R_\alpha u)=P(u) - 2\Re(\overline{\alpha}Q(u))+|\alpha|^2M(u).
\end{equation}
Finally, the magnetic momentum is only invariant by phase rotations. Space rotations and magnetic translations act on $Q$ as follows,
\begin{equation}\label{action_rot_Q}
	Q(L_\phi u)=e^{-i\phi}Q(u),
\end{equation}
and
\begin{equation}\label{action_trans_Q}
	Q(R_\alpha u) =Q(u) - \alpha M(u).
\end{equation}

If we decompose $u\in\mathcal{E}$ in the basis $(\phi_n)_{n\in\N}$, that is to say
\begin{equation*}
u = \sum_{n=0}^{+\infty} a_n\varphi_n,
\end{equation*}
we can give another representation of the conserved quantities:
\begin{equation}\label{MP_sum}
M(u) = \sum_{n=0}^{+\infty} |a_n|^2, \qquad P(u) = \sum_{n=0}^{+\infty} n|a_n|^2,
\end{equation}
\begin{equation*}
Q(u) = \sum_{n=0}^{+\infty} \sqrt{n+1}a_n\ov{a_{n+1}}.
\end{equation*}
Moreover, the Hamiltonian can be written as 
\begin{equation*}
    \mathcal{H}(u) = \frac{1}{8\pi}\sum_{\substack{ k,\ell,m,n \geq 0 \\ k + \ell = m + n }} \frac{(k + \ell)!}{2^{k+\ell} \sqrt{k! \ell ! m ! n!}}\ov{a_k} \ov{a_\ell}a_ma_n =   \frac{1}{8\pi}\sum_{j =0}^{+\infty} \frac{j!}{2^{j }}\abs{\sum _{n+p=j} \frac{a_na_p}{\sqrt{n!p!}}}^2.
\end{equation*}

We also define the function $\phi_n^\alpha$ by
\begin{equation*}
\phi^\alpha_n(z) = R_{-\ov{\alpha}}(\phi_n)(z) = \frac{1}{\sqrt{\pi n!}}(z-\ov{\alpha})^ne^{-\frac{|z|^2}{2}-\frac{|\alpha|^2}{2}+\alpha z}.
\end{equation*}

Finally, the well-posedness of~\eqref{lll} is first studied by F. Nier in~\cite{Nier1} and then by P. Gérard, P. Germain and L. Thomann in~\cite{GGT}. The latter authors proved global well-posedness in $\E$ and in $L_{\mathcal{E}}^{2,1}$, with conservation of the quantities $M,P,Q$. They also studied local and global well-posedness for more general spaces. We refer to~\cite{GGT} for details.

\subsection{A brief state of art on stationary waves}\label{stats_waves}
\begin{df}\label{def_stat}
	We call $M$-stationary wave in $\E$ a solution of the Eq.~\eqref{lll} taking the form 
	$$u(t) = e^{-i\lambda t}u_0, \quad \text{with} \quad  u_0\in \E,\lambda\in\R.$$
	We call $MP$-stationary wave in $\E$ a solution of the Eq.~\eqref{lll} taking the form 
	$$u(t) = e^{-i\lambda t}u_0(e^{-i\mu t}\cdot), \quad \text{with} \quad u_0\in \E,\lambda,\mu\in\R.$$
\end{df}
The study of such stationary waves is relevant for physical applications. In particular, we are interested in the number of zeros of such solutions. We have the following classification of stationary waves with finite number of zeros (see \cite[Theorem 6.1]{GGT}):
\begin{enumerate}
	\item The $M$-stationary waves in $\E$ with a finite number of zeros and unit mass are given, modulo space and phase rotation, by $\phi_n^\alpha(z)e^{-i\lambda t}$, with $n\in\N, \alpha\in \C$, and where $ \lambda = \dfrac{(2n)!}{\pi(n!)^22^{2n+1}}$.
	\item Besides the $\phi_n^\alpha$, the $MP$-stationary waves in $\E$ with a finite number of zeros and unit mass are given, modulo space and phase rotation, by $\psi_b(e^{-i\mu t}z)e^{-i\lambda t}$, where
$$\psi_b(z)=\frac{e^{-\frac12\pa{\frac{b}{1+b^2}}^2}}{\sqrt{\pi(1+b^2)}}\pa{z-\frac{b(2+b^2)}{1+b^2}}e^{\frac{b}{1+b^2}z-\frac{|z|^2}{2}},$$
with $b>0$, $\mu = -\dfrac{1}{8\pi}$, and  $\dis \lambda =\frac{1}{8\pi(1+b^2)}\pa{2b^2+1+\frac{b^2}{1+b^2}}.$
\end{enumerate}

Recall, for $\mu >0$, the minimization problem
$$
\min_{\substack{u \in \mathcal{E} \\ M(u) =1}} \mathcal{G}_\mu(u) \qquad \mbox{with} \qquad \mathcal{G}_\mu(u) =  8\pi \mathcal{H}(u)  + \mu P(u).
$$
Then we have the following results, concerning both local and global minimizers:
\begin{enumerate}
	\item \cite[Proposition 7.5 (i)]{GGT} For any $\mu > 0$, there exists a global minimizer.
	\item Any local or global minimizer of $\mathcal{G}_\mu$ is a stationary wave.
	\item \cite[Proposition 7.4 (i)]{GGT} The function $\phi_0$ is \emph{a strict local} minimizer if and only if $\mu > \frac12$.
	\item \cite[Proposition 7.5 (ii)]{GGT} The function $\phi_0$ is \emph{the unique global} minimizer at least for $\mu \geq \sqrt{3}-1$.
	\item \cite[Proposition 7.4 (ii)]{GGT} The function $\phi_1$ is \emph{a strict local} minimizer if and only if $\frac{5}{32} < \mu < \frac{1}{2}$.
	\item \cite[Proposition 7.4 (iii), Proposition 7.5 (iii)]{GGT} If  $0 < \mu < \frac{5}{32}$, then any local or global minimizer has an infinite number of zeros.
	\item \cite[Proposition 7.4 (iv)]{GGT} The function $\phi_k$, with $k\geq2$, is not a local minimizer for any value of $\mu > 0$.
	\item \cite[Proposition 7.4 (v)]{GGT} The function $\psi_b$, with $b>0$, is not a local minimizer for any value of $\mu \neq \frac12$.
\end{enumerate}

To summarise, we draw the following chart:
\begin{center}
\begin{tabular}{|| c | c | c | c | c ||} 
    \hline
     & Case $0 < \mu < \frac{5}{32}$ & Case $ \frac{5}{32} < \mu < \frac{1}{2}$ & Case $ \frac{1}{2} < \mu < \sqrt{3}-1 $ & Case $ \mu \geq \sqrt{3}-1$ \\
    \hline
	Local & Any local minimizer & $\phi_1$ is a strict & $\phi_0$ is a strict  & $\phi_0$ is a strict\\
	minimization & has an infinite & local minimizer &  local minimizer &  local minimizer\\
	problem & number of zeros & & & \\
    \hline
	Global & Any global minimizer & Partial answer: & $\phi_0$ is a strict & $\phi_0$ is a strict\\
	minimization & has an infinite & Theorem \ref{min_phi1} & global minimizer &  global minimizer\\
	problem & number of zeros \cite{GGT} & & Theorem \ref{th1.1} & \cite{GGT} \\
    \hline
\end{tabular}
\end{center}

In the sequel, we will complete the case  $ \frac{1}{2} < \mu < \sqrt{3}-1 $ for the global minimization. This is the result of Theorem \ref{th1.1}. We also address partially the case $\frac{5}{32} < \mu < \frac{1}{2}$, which is the result of Theorem \ref{min_phi1}. Furthermore, we treat the case $\mu = \frac12$ for the global minimization problem.


\section{Proof of Theorem \ref{th1.1}}\label{section_proof}
In this section, we will mainly prove the Theorem \ref{th1.1}, that we state here as follow:
\begin{theorem}  \label{thm1}
    Consider for $\mu>0$ the minimization problem
    \begin{equation*}
        \min_{\substack{u\in\E \\ M(u)=1}} \mathcal{G}_\mu(u) \qquad \text{with} \qquad \mathcal{G}_\mu =  8\pi \mathcal{H} + \mu P.
    \end{equation*}
\begin{enumerate}
    \item  For any $ \mu>0$, there exists a global minimizer of $\mathcal{G}_\mu$ over $\{ u \in \mathcal{E}, M(u) = 1 \}$.
    \item  For $\mu>\frac 12$, the Gaussian $\phi_0$ is the unique (up to symmetries) global minimizer of $\mathcal{G}_\mu$ over $\{ u \in \mathcal{E}, M(u) = 1 \}$.
    \item  For $\mu \in (0,\frac{5}{32})$, the global minimizer of $\mathcal{G}_\mu$ has an infinity of zeros.
    \end{enumerate}
\end{theorem}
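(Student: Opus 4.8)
The plan is to settle items $(i)$ and $(iii)$ by citation and to put all the effort into item $(ii)$. For $(i)$, existence of a global minimizer is exactly \cite[Proposition~7.5~(i)]{GGT}, and $(iii)$ is \cite[Proposition~7.4~(iii),~Proposition~7.5~(iii)]{GGT}; I would simply recall these. The content is $(ii)$: for $\mu>\tfrac12$, $\phi_0$ is the unique global minimizer up to symmetries. First I would reduce $(ii)$ to the non-negativity of $B$. Setting $F_\mu(u)=8\pi\mathcal H(u)+M(u)(\mu P(u)-M(u))$, one checks $F_\mu(u)=\mathcal G_\mu(u)-1$ when $M(u)=1$ and that $F_\mu$ is homogeneous of degree $4$, so $\phi_0$ is a global minimizer iff $F_\mu\ge0$ on all of $\mathcal E$. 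The identity $F_\mu=F_{1/2}+(\mu-\tfrac12)MP$ together with $P=\sum_n n|a_n|^2\ge0$ reduces the case $\mu\ge\tfrac12$ to proving $F_{1/2}\ge0$. Since $\mathcal H,M$ are invariant under magnetic translations while \eqref{action_trans_P} and \eqref{action_trans_Q} give $M(R_\alpha u)P(R_\alpha u)=M(u)P(u)-|Q(u)|^2$ for $\alpha=Q(u)/M(u)$, this choice turns $F_{1/2}$ into $2B$; hence $F_{1/2}\ge0\iff B\ge0$, which is Theorem~\ref{th1.2}. For uniqueness I would use that, when $\mu>\tfrac12$, the equality $F_\mu(u)=0$ forces both $F_{1/2}(u)=0$ and $(\mu-\tfrac12)P(u)=0$; with $M(u)=1$ the latter gives $P(u)=0$, i.e. $a_n=0$ for all $n\ge1$, so $u=e^{i\gamma}\phi_0$.

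It then remains to prove $B\ge0$. Following \cite{Clerck-Evnin} I would substitute $v_{n,p}=a_na_p$ and group all quartic monomials by total degree $j=n+p$: each of $\mathcal H$, $MP$, $|Q|^2$ and $M^2$ only couples variables within a fixed sector, so $B=\sum_{j\ge0}B^{(j)}$, where $B^{(j)}$ is a Hermitian quadratic form in the vector $V^{(j)}=(v_{n,j-n})_{0\le n\le j}$, constrained to the centrosymmetric subspace $v_{n,j-n}=v_{j-n,n}$. A direct check gives $B^{(0)}\equiv B^{(1)}\equiv0$, already exhibiting the degeneration of $B$.

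The decisive observation is that, on this subspace, the $|Q|^2$-part is (up to a factor $\tfrac12$) the quadratic form of the tridiagonal Jacobi matrix $T$ with off-diagonal entries $\sqrt{(n+1)(j-n)}$ — the $J_x$ operator of the spin-$\tfrac j2$ representation up to normalisation, with spectrum $\{\,j-2k:0\le k\le j\,\}$ and Krawtchouk eigenvectors $e_k$ — while the $\mathcal H$-part is the rank-one form $\tfrac12\tfrac{j!}{2^j}\,|\langle w,V^{(j)}\rangle|^2$ with $w_n\propto\sqrt{\binom jn}$, which is exactly the top eigenvector $e_0$ of $T$. Expanding $V^{(j)}=\sum_k\beta_k e_k$, the diagonal contributions of $\tfrac14MP$, $-\tfrac14|Q|^2$ and $-\tfrac12M^2$ collapse to $\sum_k\tfrac{k-2}{4}|\beta_k|^2$, while $\mathcal H$ contributes precisely $\tfrac12|\beta_0|^2$. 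Since the reflection $v_n\mapsto v_{j-n}$ commutes with $T$ and acts on $e_k$ by $(-1)^k$, the centrosymmetric constraint retains only even $k$; the coefficients at $k=0$ and $k=2$ then vanish and those at $k\ge4$ equal $\tfrac{k-2}{4}>0$, whence $B^{(j)}\ge0$.

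The main obstacle is exactly this degeneracy. Because $B$ annihilates an infinite family of directions — the magnetic translates of $\phi_0$ (the coherent/binomial vectors $e_0$ in every sector) and the $\phi_1,\psi_b$-type directions (the $k=2$ vectors) — no coercivity or perturbative argument can succeed, and the proof must be exact. The delicate point is that the single positive rank-one term from $\mathcal H$ has to cancel, to the last constant, the most negative eigendirection of the $|Q|^2$-block; this is what forces the precise identification of the Jacobi spectrum and of the alignment $w\propto e_0$, and it is why one recovers the optimal $\mu>\tfrac12$ rather than the weaker $\sqrt3-1$ of \cite{GGT}. Handling the centrosymmetry through the parity $(-1)^k$ of the eigenvectors — the centrosymmetric-matrix step — is what removes the otherwise fatal negative coefficient at $k=1$.
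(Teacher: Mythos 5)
Your reduction of item~$(ii)$ to the non-negativity of $B$ coincides with the paper's (Propositions~\ref{prop_BE} and~\ref{prop_Fmu}): the identity $F_\mu=F_{1/2}+(\mu-\tfrac12)MP$, the magnetic translation $\alpha=Q(u)/M(u)$ converting $F_{1/2}=2E$ into $2B$, and the uniqueness argument forcing $P(u)=0$ are all the same, and items~$(i)$ and~$(iii)$ are cited exactly as in the paper. Where you genuinely diverge is in the proof that each sector form $B^{(j)}$ is non-negative on the symmetric subspace. The paper (Lemmas~\ref{lemme_cas_impair} and~\ref{lemme_cas_pair}) halves the matrix by the Cantoni--Butler reduction, writes the result as a tridiagonal matrix plus the rank-one all-ones matrix, and bounds the number of non-positive eigenvalues via Wilkinson's interlacing for rank-one perturbations combined with a Sturm-sequence sign count extracted from the generating function $(1+x)^{a}(1-x)^{2}$. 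You instead diagonalize $B^{(j)}$ exactly: on the symmetric sector the $|Q|^2$ block is $\tfrac12\langle V,\mathcal{T}V\rangle$ with $\mathcal{T}$ the Jacobi matrix $2J_x$ of spin $j/2$ (spectrum $\{j-2k\}$, Krawtchouk eigenvectors $e_k$), the $\mathcal{H}$ term is $\tfrac12$ times the projection onto its top eigenvector $\bigl(\sqrt{\tbinom{j}{n}}\bigr)_n$, the remaining diagonal terms combine to $\tfrac{j}{8}-\tfrac12-\tfrac{j-2k}{8}=\tfrac{k-2}{4}$, and persymmetry of $\mathcal{T}$ gives $Je_k=(-1)^k e_k$ so that the symmetric constraint eliminates the only negative coefficient ($k=1$), leaving $0,\,0,\,\tfrac{k-2}{4}>0$ at $k=0,2,4,\dots$. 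I verified these identities ($2J_xe_0=je_0$, the coefficient bookkeeping, the parity of the eigenvectors), and they are correct; moreover your $k=0$ and $k=2$ directions span, after the diagonal rescaling between the $w_k$ and $A_{jk}$ coordinates, exactly the kernel vectors $v^{(j)},w^{(j)}$ of~\eqref{vecteurs_propres_B_impair}. Your route is shorter and more structural, giving the full spectral decomposition of $B^{(j)}$ (hence the kernel needed later in Lemma~\ref{cas_egalite_B} for free) rather than merely counting non-positive eigenvalues; the paper's route is more elementary and self-contained, avoiding the spin/Krawtchouk diagonalization. Two small caveats: $B^{(1)}$ is not the zero matrix, only its restriction to the symmetric subspace vanishes; and you should state that for $u\in L^{2,1}_{\E}$ the rearrangement into sectors and the term-by-term positivity are justified by absolute convergence, as in the paper.
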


\begin{remark}
Items $(i)$ and $(iii)$ have already been proved in \cite{GGT}. The novelty is item $(ii)$ with the optimal condition $\mu >1/2$ (in \cite{GGT} the condition was $\mu \geq \sqrt{3}-1$). 
\end{remark}

\begin{remark}\label{case_mu_0}
	With regard to item $(i)$, one can ask about the case $\mu = 0$, for which there is no global minimizer: suppose there exists a global minimizer $u$ for $\mu = 0$. Then, $u$ satisfies
$$ \forall n \in \N, \qquad 8\pi \mathcal{H}(u) = \mathcal{G}_0(u) \leq \mathcal{G}_0(\phi_n) = \frac{(2n)!}{2^{2n}n!^2} \xrightarrow[n \to +\infty]{ } 0,$$
due to Stirling estimate, so that $\mathcal{H}(u) = \frac14\|u\|_{L^4}^4 = 0$. Then $u=0$, which is not consistent with the constraint~$M(u) = 1$.
\end{remark}

\begin{remark}
    In \cite{GGT}, it is shown that~$\phi_0$ is the unique global minimizer of $\mathcal{G}_\mu$ among even functions in~$\E$ if $\mu > \frac12$. But the case of even functions is much simpler because it cancels most of the quadratic form's correlations between the coefficient in the basis $(\phi_n)$.
\end{remark}

We do not know entirely what happens in the case $5/32< \mu< 1/2$. The function~$\phi_1$ is a local minimizer, and a global minimizer for~$\mu$ close enough to~$\frac12$ (see Theorem~\ref{resume_th}), but we cannot obtain any explicit bound with our method.

\begin{remark}
Observe that
\begin{align*}
  \mathcal{G}_\mu(\phi_0) &= 1 \\
  \mathcal{G}_\mu(\phi_1) &= \frac{1}{2} + \mu \\
  \mathcal{G}_\mu(\psi_b) &= 1 + \pa{\mu - \frac{1}{2}} \frac{1}{(1+b^2)^2}.
\end{align*}
This implies in particular that $\mathcal{G}_\mu(\phi_0) =  \mathcal{G}_\mu(\phi_1) = \mathcal{G}_\mu(\psi_b) = 1$ if $\mu = \frac{1}{2}$, hence the minimizer is not unique when $\mu=\frac12$. We plot the values of $\mathcal{G}_\mu$ for values of $\mu$ for the functions $\phi_0,\phi_1$ and $\psi_b$ in Fig.~\ref{fig:graph1}. We also emphasise here that
\begin{equation*}
    \forall z \in \C, \qquad \psi_b(z) \xrightarrow[b \to 0]{ }\phi_1(z)
\end{equation*}
and 
\begin{equation*}
    \forall z \in \C, \qquad \psi_b(z) \xrightarrow[b \to +\infty]{ }-\phi_0(z). 
\end{equation*}
\end{remark}

\hspace{2cm}
\begin{center}
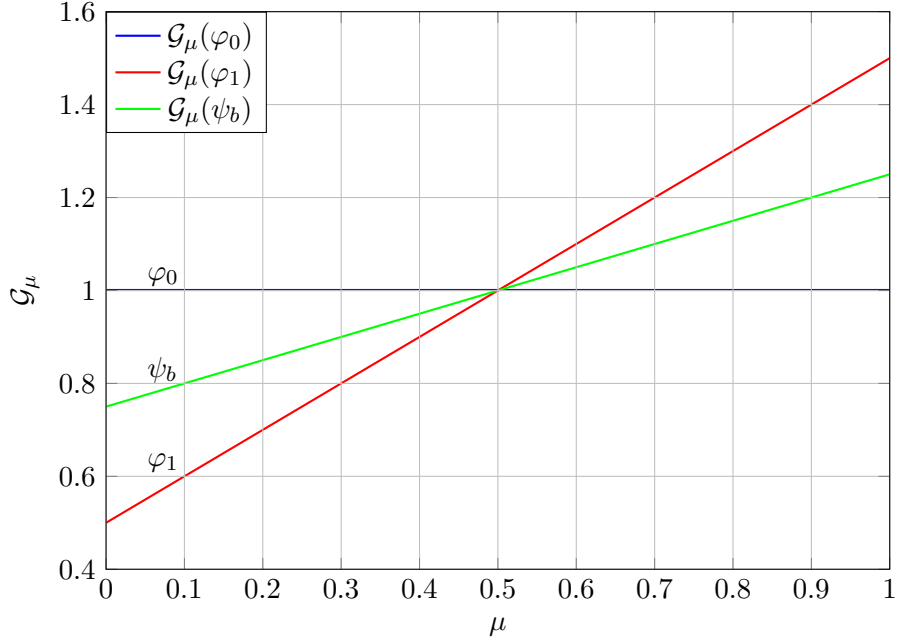
\begin{figure}[ht]
\begin{tikzpicture}
\begin{axis}[
	  xlabel={$\mu$},
        ylabel={$\mathcal{G}_\mu$},
        axis on top=true,
        xmin=0,
        xmax=1,
        ymin=0.4,
        ymax=1.6,
        height=9.0cm,
        width=12.0cm,
        grid, 
	 legend entries={},
	 legend style={at={(0,1)},anchor=north west}]
    ]
	\addplot [domain=0:1, samples=80, mark=none, thick, blue, name path=G0] {1.001+0*x};
		\addlegendentry{$\mathcal{G}_\mu(\phi_0)$}
	\addplot [domain=0:1, samples=80, mark=none, thick, red, name path=G1] {1/2+x};
		\addlegendentry{$\mathcal{G}_\mu(\phi_1)$}
	\addplot [domain=0:1, samples=80, mark=none, thick, green, name path=Gb] {1+(x-1/2)/(1+1^2)};
		\addlegendentry{$\mathcal{G}_\mu(\psi_b)$}
    \node [right, black] at (axis cs: 0.04,1.03) {$\phi_0$};
    \node [right, black] at (axis cs: 0.04,0.83) {$\psi_b$};
    \node [right, black] at (axis cs: 0.04,0.63) {$\phi_1$};
\end{axis}
\end{tikzpicture}
\caption{Graphs of $\mathcal{G}_\mu(u)$ for different stationary waves $u$. Here, we used $b=1$.} \label{fig:graph1}
\end{figure}
\end{center}

\subsection{Preliminaries: definition of some functionals and relation to the problem}

As in \cite[equation (23)]{Clerck-Evnin} we define the functional
  \begin{eqnarray*}
    B(u)&=&4\pi \mathcal{H}(u)+\frac{1}{4}\pa{M(u)P(u)-|Q(u)|^2     }-\frac12 M^2(u)\\
    &=&\frac12 \sum_{j=0}^{+\infty} \frac{j!}{2^j} \abs{\sum_{k=0}^{j}\frac{a_k a_{j-k}}{\sqrt{k ! (j-k)!}}}^2 +\frac{1}4  \sum^{+\infty}_{n,k=0}(n-2) \ov{a_n}\ov{a_k}{a}_n{a}_k \\
    & & \qquad \qquad \qquad \qquad \qquad \qquad \qquad \qquad \qquad -\frac14  \sum^{+\infty}_{n,k=0}\sqrt{n+1}\sqrt{k+1}\ov{a_{n+1}} \ov{a_k}{a}_n{a}_{k+1}.
\end{eqnarray*}

Notice that this functional is homogeneous. Furthermore, we have the following lemma:
\begin{lemme}\label{lemme_invariance_B}
	The quantity $B$ is invariant by the symmetries of the equation:
	$$ \forall \phi \in \R, \forall u \in L^{2,1}_{\mathcal{E}}, \quad B(T_\phi u) = B(u),$$
	$$ \forall \theta \in \R, \forall u \in L^{2,1}_{\mathcal{E}}, \quad B(L_\theta u) = B(u),$$
	$$ \forall \alpha \in \C, \forall u \in L^{2,1}_{\mathcal{E}}, \quad B(R_\alpha u)= B(u).$$
\end{lemme}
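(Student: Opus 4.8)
The plan is to decompose $B$ into its building blocks and track how each symmetry acts on them, relying entirely on the transformation laws \eqref{action_trans_P}, \eqref{action_rot_Q} and \eqref{action_trans_Q} already established above. Since
$$B(u) = 4\pi\mathcal{H}(u) + \tfrac14\pa{M(u)P(u) - |Q(u)|^2} - \tfrac12 M^2(u),$$
and since both $\mathcal{H}$ and $M$ are invariant under all three symmetries (the $T_\phi, L_\theta, R_\alpha$ being precisely the symmetries preserving the Hamiltonian $\mathcal{H}$, while $M$ depends only on $|u|^2$ and is stated to be invariant), it suffices to verify that the combination $M(u)P(u) - |Q(u)|^2$ is unchanged in each of the three cases. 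The terms $4\pi\mathcal{H}$ and $-\tfrac12 M^2$ then ride along automatically.

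For the phase rotation $T_\phi$, I would simply note that $|T_\phi u|^2 = |u|^2$, so that $M$, $P$ and $Q$ — each defined as an integral against $|u|^2$ — are all literally unchanged, giving $B(T_\phi u) = B(u)$ at once. For the space rotation $L_\theta$, the quantities $M$ and $P$ are again invariant, since they integrate $|u|^2$ against the radially symmetric weights $1$ and $|z|^2 - 1$; meanwhile \eqref{action_rot_Q} gives $Q(L_\theta u) = e^{-i\theta}Q(u)$, whence $|Q(L_\theta u)|^2 = |Q(u)|^2$. Thus $M P - |Q|^2$, and therefore $B$, is preserved.

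The only case requiring computation is the magnetic translation $R_\alpha$, which I expect to be the main (and essentially the sole) obstacle, although it reduces to an exact algebraic cancellation. Writing $M = M(u)$, $P = P(u)$, $Q = Q(u)$ and using $M(R_\alpha u) = M$ together with \eqref{action_trans_P} and \eqref{action_trans_Q}, I would expand
$$M(R_\alpha u)P(R_\alpha u) = M\pa{P - 2\Re(\overline{\alpha}Q) + |\alpha|^2 M} = MP - 2M\Re(\overline{\alpha}Q) + |\alpha|^2 M^2,$$
and, using that $M$ is real,
$$|Q(R_\alpha u)|^2 = |Q - \alpha M|^2 = |Q|^2 - 2M\Re(\overline{\alpha}Q) + |\alpha|^2 M^2.$$
Subtracting, the correction terms $-2M\Re(\overline{\alpha}Q)$ and $+|\alpha|^2 M^2$ cancel identically, leaving $M(R_\alpha u)P(R_\alpha u) - |Q(R_\alpha u)|^2 = MP - |Q|^2$.

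Combining this with the invariance of $\mathcal{H}$ and $M$ under $R_\alpha$ yields $B(R_\alpha u) = B(u)$, which completes the proof. The conceptual content is that $B$ has been designed so that the anomalous transformation of $P$ under magnetic translations is exactly compensated by that of $|Q|^2$; the verification for $T_\phi$ and $L_\theta$ is immediate, and no hard analysis is involved beyond the already-recorded action formulas.
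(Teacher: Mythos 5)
Your proposal is correct and follows essentially the same route as the paper: invariance of $\mathcal{H}$ and $M$, the modulus identity $|Q(L_\theta u)|=|Q(u)|$ from \eqref{action_rot_Q}, and the exact cancellation of the cross terms in $M(R_\alpha u)P(R_\alpha u)-|Q(R_\alpha u)|^2$ via \eqref{action_trans_P} and \eqref{action_trans_Q}. Your expansion $|Q-\alpha M|^2=|Q|^2-2M\Re(\overline{\alpha}Q)+|\alpha|^2M^2$ is the correct one (the paper's final line omits the factor $M$ in the cross term, a harmless typo), so nothing is missing.
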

\begin{proof}
	The first assertion is obvious by invariance of the Hamiltonian, the mass, the angular momentum and the magnetic momentum by phase rotations. The second one is obtained by using~\eqref{action_rot_Q}, that gives $|Q(L_\theta u)| = |Q(u)|$, and by using the invariance laws of the other quantities under $L_\theta$. For the last assertion we use both Eqs~\eqref{action_trans_P} and~\eqref{action_trans_Q}, just as the invariance of~$\mathcal{H}$ and~$M$ under magnetic translation:
\begin{align*}
    B(R_\alpha u) &= 4\pi\mathcal{H}(R_\alpha u)+\frac{1}{4}   \pa{M(R_\alpha u)P(R_\alpha u)-|Q(R_\alpha u)|^2}-\frac12 M^2(R_\alpha u) \\
    &= 4\pi \mathcal{H}(u)+\frac{1}{4} \pa{M(u)(P(u) - 2\Re(\overline{\alpha}Q(u))+|\alpha|^2M(u))-|Q(u) - \alpha M(u)|^2}-\frac12 M^2(u)\\
    &= B(u),
\end{align*}
after developing $|Q(u) - \alpha M(u)|^2 = |Q(u)|^2 -2\Re(\overline{\alpha}Q(u)) + |\alpha|^2M(u)^2.$
\end{proof}

Define $E$ by
\begin{align*}
    E(u) &= 4\pi \mathcal{H}(u)+\frac{1}{4} M(u)P(u)-\frac12 M^2(u)\\
    &=\frac12 \sum_{j=0}^{+\infty} \frac{j!}{2^j} \abs{\sum_{k=0}^{j}\frac{a_k a_{j-k}}{\sqrt{k ! (j-k)!}}}^2+\frac{1}{4}  \sum^{+\infty}_{n,k=0}(n-2) \ov{a_n}\ov{a_k}{a}_n{a}_k.
\end{align*}
Therefore, $$B(u) = E(u) - \frac14|Q(u)|^2.$$

Observe that 
\begin{prop}\label{prop_BE}
    $B\geq 0 $ is equivalent to $E\geq 0$.
\end{prop}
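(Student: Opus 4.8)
\textbf{Proof proposal for Proposition~\ref{prop_BE}.}

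The plan is to show the two implications separately, exploiting the relation $B(u) = E(u) - \frac14|Q(u)|^2$ together with the translation invariance of $B$ established in Lemma~\ref{lemme_invariance_B}. The direction $E \geq 0 \implies B \geq 0$ is immediate: since $|Q(u)|^2 \geq 0$, we have $B(u) = E(u) - \frac14|Q(u)|^2 \leq E(u)$, and this inequality points the wrong way, so this naive bound does \emph{not} suffice. Instead I would argue that the nontrivial content is exactly the reverse, and that the correct strategy for \emph{both} directions is to use the magnetic translation $R_\alpha$ to normalise $Q$.

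The key observation is that, by \eqref{action_trans_Q}, one has $Q(R_\alpha u) = Q(u) - \alpha M(u)$, so for any $u$ with $M(u) \neq 0$ the choice $\alpha = Q(u)/M(u)$ produces a function $v := R_\alpha u$ with $Q(v) = 0$. For such a $v$ we have $B(v) = E(v)$ exactly. First I would prove $B \geq 0 \implies E \geq 0$: given an arbitrary $u$, set $\alpha = Q(u)/M(u)$ and $v = R_\alpha u$, so that $Q(v) = 0$ and hence $E(v) = B(v) \geq 0$ by hypothesis; it then remains to transfer this back, which requires showing $E(u) \geq E(v)$, i.e.\ that the $Q=0$ configuration \emph{minimises} $E$ within a translation orbit. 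Conversely, for $E \geq 0 \implies B \geq 0$: given $u$, again pass to $v = R_\alpha u$ with $Q(v)=0$; by invariance of $B$ (Lemma~\ref{lemme_invariance_B}) we have $B(u) = B(v) = E(v) \geq 0$, where the last equality uses $Q(v)=0$ and the last inequality is the hypothesis applied to $v$. This second direction is clean and essentially self-contained once the normalisation is in place.

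Consequently the load-bearing step is the first implication, which reduces to understanding how $E$ varies along a magnetic-translation orbit. I would compute $E(R_\alpha u)$ explicitly using the known transformation laws: $\mathcal{H}$ and $M$ are invariant, while $P(R_\alpha u) = P(u) - 2\Re(\overline{\alpha}Q(u)) + |\alpha|^2 M(u)$ by \eqref{action_trans_P}. Substituting into the definition of $E$ gives
\begin{equation*}
    E(R_\alpha u) = 4\pi\mathcal{H}(u) + \frac14 M(u)\pa{P(u) - 2\Re(\overline{\alpha}Q(u)) + |\alpha|^2 M(u)} - \frac12 M^2(u),
\end{equation*}
which is a quadratic polynomial in $\alpha$ with positive leading coefficient $\frac14 M^2(u) \geq 0$, minimised precisely at $\alpha = Q(u)/M(u)$. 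Evaluating at the minimiser recovers $E(u) - \frac14|Q(u)|^2 = B(u)$, confirming $\min_\alpha E(R_\alpha u) = B(u)$ and in particular $E(u) \geq B(u)$ with equality when $Q(u)=0$. This quadratic-completion identity is the heart of the equivalence: it shows simultaneously that $B(u) = \min_{\alpha} E(R_\alpha u)$, from which both implications follow at once.

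I expect the main (though still minor) obstacle to be the degenerate case $M(u) = 0$, where one cannot form $\alpha = Q(u)/M(u)$; however, $M(u)=0$ forces $u=0$ since $M$ is the squared $L^2$-norm, and then $B(u)=E(u)=0$ trivially, so this case is handled separately and poses no real difficulty. The other point requiring a line of care is confirming that $R_\alpha$ preserves the function space $L^{2,1}_{\mathcal{E}}$ so that all quantities remain finite along the orbit, but this is already implicit in Lemma~\ref{lemme_invariance_B}. Thus the equivalence $B \geq 0 \Leftrightarrow E \geq 0$ follows from the single identity $B(u) = \min_{\alpha \in \C} E(R_\alpha u)$, which is exactly the statement that $B$ is the translation-invariant (i.e.\ $Q$-reduced) version of $E$.
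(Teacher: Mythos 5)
Your proof is correct and takes essentially the same route as the paper: the heart of both arguments is the computation of $E(R_\alpha u)$ as a quadratic in $\alpha$, minimised at $\alpha = Q(u)/M(u)$ where it equals $B(u)$, so that $E\geq 0$ on the translation orbit forces $B(u)\geq 0$. The only (cosmetic) difference is that the paper dispatches the easy direction immediately from $E = B + \frac14|Q|^2 \geq B$, whereas you route it through the orbit as well; your extra remark on the degenerate case $M(u)=0$ is a harmless addition.
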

\begin{proof}
    We have $E \geq B$ and if $E(u)\geq 0 $ for all $u\in \E$, then for all $\alpha \in \C$
    \begin{equation*}
        E(R_\alpha u)= 4\pi \mathcal H(u)+\frac{1}{4}\pa{ M(u)P(u) - 2M(u) \Re(\ov \alpha Q(u)) + |\alpha|^2 M^2(u)}-\frac12 M^2(u) \geq 0,
    \end{equation*}
    whose minimum is obtained for $\alpha= Q(u)/M(u)$. With this particular choice of~$\alpha$, we recover~$B(u)$.
\end{proof}

\begin{remark}
    In order to prove Theorem \ref{thm1}, we will study the functional $B$ instead of $E$. The method we use does not work for $E$, see Remark \ref{RemBE} below.
\end{remark}

\begin{prop}\label{prop_Fmu}
    Assume that for all $u \in L^{2,1}_{\E}$, $B(u)\geq 0$. Then Theorem \ref{thm1} $(ii)$ holds true.
\end{prop}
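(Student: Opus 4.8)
The plan is to reduce the claimed minimization inequality to the hypothesis $B\geq 0$ through a single algebraic identity, so that no further analysis of the quadratic form is needed here. First I would record the relevant quantities at the Gaussian: writing $\phi_0$ in the basis $(\phi_n)$ means $a_0=1$ and $a_n=0$ for $n\geq 1$, so \eqref{MP_sum} gives $M(\phi_0)=1$, $P(\phi_0)=0$ and $Q(\phi_0)=0$; hence $\mathcal{G}_\mu(\phi_0)=8\pi\mathcal{H}(\phi_0)=1$. The goal therefore becomes: for every $u\in\mathcal{E}$ with $M(u)=1$, one has $\mathcal{G}_\mu(u)\geq 1$, with equality only for $u=e^{i\gamma}\phi_0$.

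I may restrict to $u\in L^{2,1}_{\E}$: if $u\in\mathcal{E}$ with $M(u)=1$ but $u\notin L^{2,1}_{\E}$, then $P(u)=+\infty$ and, since $\mu>0$ and $\mathcal{H}\geq 0$, we get $\mathcal{G}_\mu(u)=+\infty>1$, so such $u$ are irrelevant. On $L^{2,1}_{\E}$, the next step is to isolate $8\pi\mathcal{H}$ from the definition $B=4\pi\mathcal{H}+\tfrac14(MP-|Q|^2)-\tfrac12 M^2$. Doubling and rearranging gives $8\pi\mathcal{H}=2B-\tfrac12(MP-|Q|^2)+M^2$, whence, imposing $M(u)=1$,
\[
\mathcal{G}_\mu(u)=8\pi\mathcal{H}(u)+\mu P(u)=2B(u)+1+\Big(\mu-\tfrac12\Big)P(u)+\tfrac12|Q(u)|^2.
\]
Now the hypothesis $B(u)\geq 0$, together with $P(u)=\sum_{n}n|a_n|^2\geq 0$ from \eqref{MP_sum}, with $|Q(u)|^2\geq 0$, and with $\mu-\tfrac12>0$, shows that every term on the right beyond the constant $1$ is nonnegative. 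Therefore $\mathcal{G}_\mu(u)\geq 1=\mathcal{G}_\mu(\phi_0)$, proving that $\phi_0$ is a global minimizer.

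For uniqueness I would inspect the equality case. If $\mathcal{G}_\mu(u)=1$ with $M(u)=1$ and $\mu>\tfrac12$, the identity forces $\big(\mu-\tfrac12\big)P(u)=0$, hence $P(u)=\sum_n n|a_n|^2=0$, so $a_n=0$ for all $n\geq 1$ and $u=a_0\phi_0$ with $|a_0|=1$, i.e. $u=e^{i\gamma}\phi_0$. Since $L_\theta\phi_0=\phi_0$, this is exactly uniqueness up to phase and space rotations. As a consistency check one sees that genuine magnetic translates are not minimizers: by \eqref{action_trans_P}, $P(R_\alpha\phi_0)=|\alpha|^2$, so $\mathcal{G}_\mu(R_\alpha\phi_0)=1+\mu|\alpha|^2>1$ for $\alpha\neq 0$. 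The only real subtlety in this argument is the bookkeeping around the domain $L^{2,1}_{\E}$ and the sign of $P$; the genuine difficulty, namely the nonnegativity of $B$ itself, is precisely what is assumed here and is established separately in the proof of Theorem~\ref{th1.2}.
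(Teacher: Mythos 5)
Your proof is correct and follows essentially the same route as the paper: your identity $\mathcal{G}_\mu(u)-1=2B(u)+(\mu-\tfrac12)P(u)+\tfrac12|Q(u)|^2$ (for $M(u)=1$) is exactly the paper's decomposition $F_\mu=2E+(\mu-\tfrac12)PM$ with $E=B+\tfrac14|Q|^2$ substituted in, and the uniqueness argument via $P(u)=0$ is the same. The only (harmless) streamlining is that you bypass the intermediate functional $E$ and Proposition~\ref{prop_BE}, of which only the trivial direction $E\geq B$ is needed here.
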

\begin{proof}
	For any~$\mu>0,$ the function $\phi_0$ is a global minimizer of \eqref{min_pb} if and only if, for all function~$u$ such that~$M(u)=1$, $\mathcal{G}_\mu(u) \geq \mathcal{G}_\mu(\phi_0).$ As suggested in~\cite[Proposition 7.5]{GGT}, this is equivalent by an homogeneity argument to~$F_\mu \geq 0$, where:
\begin{equation*}
    F_\mu(u)=8\pi \mathcal H(u)+M(u)(\mu P(u)-M(u)).
\end{equation*}
We check that 
\begin{equation*}
    F_\mu(u) = 2E(u)+(\mu-\frac12) P(u)M(u).
\end{equation*}
Suppose that for all $u \in L^{2,1}_{\E}$, $B(u)\geq 0$.  Then, Proposition \ref{prop_BE} implies that $$\forall u \in L^{2,1}_{\E}, \quad  F_{\frac12}(u) = 2E(u) \geq 0,$$
meaning that $\phi_0$ is a global minimizer for $\mu = \frac12.$ It follows, for $\mu_1 > \frac12$ and $u \in L^{2,1}_{\E},$
$$ 1 = \mathcal{G}_{\mu_1}(\phi_0) = \mathcal{G}_{\frac12}(\phi_0) \leq \mathcal{G}_{\frac12}(u) \leq \mathcal{G}_{\mu_1}(u),$$
so that $\phi_0$ is also a global minimizer for $\mu = \mu_1$. Furthermore, 
$$  F_{\mu_1}(u) = 2E(u)+(\mu_1-\frac12) P(u)M(u) \geq (\mu_1-\frac12) P(u)M(u).$$
We deduce $F_{\mu_1}(u) > 0$ for $u \neq c \phi_0$ due to \eqref{MP_sum}, and Theorem \ref{thm1} $(ii)$ holds true.
\end{proof}

Set $A_{jk}=\frac{a_k a_{j-k}}{\sqrt{k ! (j-k)!}}$ for $0\leq k \leq j$. We use the convention $A_{j,j+1}=0$ and  $A_{j,-1}=0$. Then 

\begin{lemme} The functional $B$ is a quadratic form in the $(A_{jk})$, and the following formula holds true: 
\begin{eqnarray}\label{exp_B}
    B(u) &=& \frac12 \sum^{+\infty}_{j=0} \sum^{j}_{k=0}  \ov{A_{jk}} \pac{\frac{j!}{2^j} \sum_{\ell=0}^{j} A_{j\ell}  +\frac{k! (j-k)!}{4} \pa{(j-4) A_{jk}-(k+1)A_{j,k+1}   -(j-k+1)A_{j,k-1} }}  \nonumber \\
    &=&\sum^{+\infty}_{j=0} \sum^{j}_{k,\ell=0} B_{k\ell}^{(j)} \ov{A_{jk}}{A_{j\ell}}.
\end{eqnarray}
\end{lemme}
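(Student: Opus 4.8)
The plan is to expand each of the three groups of terms in the definition of $B$ separately in the variables $A_{jk}$ and to match them against the bracket in~\eqref{exp_B}. The structural fact that drives the whole computation is the symmetry
$$A_{jk}=\frac{a_k a_{j-k}}{\sqrt{k!(j-k)!}}=A_{j,j-k},\qquad 0\le k\le j,$$
together with the observation that every contribution to $B$ preserves a total degree $j$: the Hamiltonian couples only indices with $k+\ell=m+n=j$; the products $|a_n|^2|a_k|^2$ arising from $\frac14MP-\frac12 M^2$ are grouped by $j=n+k$; and the terms $\overline{a_{n+1}}\,\overline{a_k}\,a_n a_{k+1}$ arising from $-\frac14|Q|^2$ are grouped by $j=n+k+1$. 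This is exactly what yields the block structure $B=\sum_j\sum_{k,\ell}B^{(j)}_{k\ell}\overline{A_{jk}}A_{j\ell}$, with $B^{(j)}$ acting on the vector $(A_{j0},\dots,A_{jj})$, the endpoint conventions $A_{j,j+1}=A_{j,-1}=0$ making the reindexings clean.

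For the Hamiltonian term, since $\sum_{k=0}^j A_{jk}=\sum_{n+p=j}\frac{a_na_p}{\sqrt{n!p!}}$, we have $4\pi\mathcal H(u)=\tfrac12\sum_j\tfrac{j!}{2^j}\big|\sum_{k=0}^jA_{jk}\big|^2=\tfrac12\sum_j\tfrac{j!}{2^j}\sum_{k,\ell}\overline{A_{jk}}A_{j\ell}$, which is precisely the $\tfrac{j!}{2^j}\sum_\ell A_{j\ell}$ piece of the bracket. For the diagonal part I would write $\tfrac14MP-\tfrac12 M^2=\tfrac14\sum_{n,k}(n-2)|a_n|^2|a_k|^2$, group by $j=n+k$, and use $|a_k|^2|a_{j-k}|^2=k!(j-k)!\,|A_{jk}|^2$ to reach $\tfrac14\sum_j\sum_{k}(k-2)\,k!(j-k)!\,|A_{jk}|^2$. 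Here the weight $k-2$ is not symmetric under $k\leftrightarrow j-k$, but $k!(j-k)!\,|A_{jk}|^2$ is; averaging the summand with its image under $k\mapsto j-k$ replaces $k-2$ by $\tfrac12\big((k-2)+(j-k-2)\big)=\tfrac12(j-4)$, giving $\tfrac18\sum_j\sum_k(j-4)\,k!(j-k)!\,|A_{jk}|^2$, i.e. the diagonal term $\tfrac{k!(j-k)!}{4}(j-4)A_{jk}$.

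The off-diagonal part coming from $-\tfrac14|Q|^2=-\tfrac14\sum_{n,k}\sqrt{n+1}\sqrt{k+1}\,\overline{a_{n+1}}\,\overline{a_k}\,a_na_{k+1}$ is the delicate one. Setting $j=n+k+1$ and reducing the factorials (using $\sqrt{n+1}\,\sqrt{(n+1)!\,n!}=(n+1)!$ for the $n$-factors and the analogous identity for the $k$-factors), I would obtain $-\tfrac14\,S$ with $S=\sum_j\sum_k(k+1)!(j-k)!\,\overline{A_{j,k+1}}A_{jk}$. On the other side, the two off-diagonal entries of the bracket produce, after $(k+1)k!=(k+1)!$ and $(j-k+1)(j-k)!=(j-k+1)!$, exactly $-\tfrac18(S+S')$ where $S'=\sum_j\sum_k(k+1)!(j-k)!\,\overline{A_{jk}}A_{j,k+1}$. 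The crux is that the substitution $k\mapsto j-1-k$ combined with $A_{jk}=A_{j,j-k}$ leaves the coefficient $(k+1)!(j-k)!$ invariant and interchanges $S$ and $S'$; hence $S=S'$ (so both are real), and $-\tfrac18(S+S')=-\tfrac14 S$ matches the $|Q|^2$ contribution term for term.

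I expect the main obstacle to be the bookkeeping in this last step: carrying the half-integer factors $\sqrt{n+1},\sqrt{k+1}$ through the change of index $j=n+k+1$ and collapsing them into clean factorials, and then recognising that the two superficially different off-diagonal terms $-(k+1)A_{j,k+1}$ and $-(j-k+1)A_{j,k-1}$ are the two halves of a single $(k\leftrightarrow j-k)$-symmetrised Hermitian sum. Once the three contributions are assembled they reproduce the bracket of~\eqref{exp_B}, and reading off the coefficient of each $\overline{A_{jk}}A_{j\ell}$ within a fixed block defines $B^{(j)}_{k\ell}$ and shows that $B$ is the stated quadratic form; it only remains to note that all the rearrangements are legitimate for $u\in L^{2,1}_{\mathcal E}$, where the relevant series converge absolutely.
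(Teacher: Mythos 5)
Your proposal is correct and follows essentially the same route as the paper's proof: expand $4\pi\mathcal H$, the diagonal $\tfrac14 MP-\tfrac12 M^2$ part, and the $-\tfrac14|Q|^2$ part separately, group by the total degree $j$, symmetrize the diagonal weight to $\tfrac12(j-4)$, and split the $Q$-sum into two equal halves via the change of index $k\mapsto j-k$ (the paper's equations \eqref{eq3}--\eqref{eq4} are exactly your identity $S=S'$). The factorial bookkeeping you flag as the main obstacle works out precisely as you indicate, so nothing is missing.
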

This expression of $B$ appears first in \cite[equation (24)]{Clerck-Evnin}.

\begin{proof} The proof is elementary. We have
    \begin{equation}\label{eq1}
        4\pi \mathcal{H}(u)=\frac12 \sum_{j=0}^{+\infty} \frac{j!}{2^j} \abs{\sum_{k=0}^{j}A_{jk}}^2   
    \end{equation}

    \begin{eqnarray}\label{eq2}
        \sum_{n,k=0}^{+\infty}(n-2) \ov{a_n}\ov{a_k} {a}_n {a}_k &=& \frac12  \sum^{+\infty}_{n,k=0}(n+k-4) \ov{a_n} \ov{a_k}{a}_n{a}_k \nonumber \\
        &=& \frac12\sum_{j=0}^{+\infty}  \sum^{j}_{k=0}(j-4) k! (j-k)! \abs{\frac{{a_k}{a_{j-k}}}{\sqrt{k!(j-k)!}} }^2 \nonumber \\
        &=& \frac12\sum_{j=0}^{+\infty}  \sum^{j}_{k=0}(j-4) k! (j-k)! |A_{jk}|^2
    \end{eqnarray}
    \begin{eqnarray}\label{eq3}
        \sum^{+\infty}_{n,k=0}\sqrt{n+1}\sqrt{k+1}  \ov{a_{n+1}} \ov{a_k}  {a}_n{a}_{k+1} &=& \sum^{+\infty}_{j=1}\sum_{k=0}^{j-1} \sqrt{j-k}\sqrt{k+1}  \ov{a_{j-k}} \ov{a_k}  {a}_{j-(k+1)}{a}_{k+1} \nonumber \\
        &=& \sum^{+\infty}_{j=0} \sum^{j}_{k=0}  k! (j-k)!    (k+1) \ov{A_{jk}}  A_{j,k+1}.   
    \end{eqnarray}
    In the latter formula, the change of variables $k \mapsto j-k$ gives
    \begin{equation}\label{eq4}
    \sum^{+\infty}_{n,k=0}\sqrt{n+1}\sqrt{k+1}\ov{a_{n+1}} \ov{a_k}{a}_n{a}_{k+1}
    = \sum_{j=0}^{+\infty} \sum_{k=0}^{j}k!(j-k)!(j-k+1)\ov{A_{jk}} A_{j,k-1}.       
    \end{equation}
    Therefore, by \eqref{eq1}, \eqref{eq2}, \eqref{eq3} and \eqref{eq4}, we obtain the formula.
\end{proof}

\subsection{Digression on centrosymmetric matrices}\label{sec_centro}
In this subsection, we study a class of matrices called centrosymmetric matrices, in which the family $(B^{(j)})$ lies. We focus on the results \cite[Lemma 2, Lemma 3, Theorem 2]{Cantoni-Butler}, giving reduction properties for centrosymmetric matrices, and we refer to \cite{Cantoni-Butler} for more details, and for proofs. Let $n \geq 2$ be a fixed integer.
\begin{df}
	We define $J$ the symmetric matrix $\text{antidiag}(1,n)=(J_{i,j})_{1\leq i,j\leq n} \in \mathcal{M}_n(\R)$, that is to say the matrix which is defined by
	$$\left\{
        \begin{array}{ll}
        J_{i,j} = 1 & \; \text{if} \; j=n+1-i, \\
        J_{i,j} = 0 & \; \text{otherwise.}
        \end{array}
    \right.$$
\end{df}
\begin{prop}
    Let $A$ be a matrix of order~$n$. Then the matrix~$JA$ is the upside-down matrix of~$A$, i.e. the matrix obtained from~$A$ by horizontal symmetry with the middle of~$A$, whereas the matrix~$AJ$ is the matrix obtained from~$A$ by vertical symmetry.
\end{prop}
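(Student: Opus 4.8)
The plan is to prove both identities by a direct computation of the entries of the products $JA$ and $AJ$, reading off the effect that multiplication by $J$ has on the rows and columns of $A$. The key observation is that, by the definition of $J=\text{antidiag}(1,n)$, its entries are given by a single Kronecker delta, $J_{i,k}=\delta_{k,\,n+1-i}$, so that multiplying by $J$ amounts to selecting one reversed index rather than summing over many terms.

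First I would compute the entries of $JA$. Writing $A=(A_{k,j})_{1\le k,j\le n}$ and applying the definition of matrix multiplication together with $J_{i,k}=\delta_{k,\,n+1-i}$, I would obtain
$$ (JA)_{i,j}=\sum_{k=1}^{n}J_{i,k}A_{k,j}=A_{n+1-i,\,j},\qquad 1\le i,j\le n. $$
Thus the $i$-th row of $JA$ coincides with the $(n+1-i)$-th row of $A$: the first row of $A$ becomes the last, the second becomes the second-to-last, and so on. This is exactly the reflection of $A$ across its horizontal middle axis, that is, the upside-down matrix, which establishes the first claim.

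Symmetrically, I would compute, using $J_{k,j}=\delta_{k,\,n+1-j}$ (the symmetry $J=J^{\top}$ makes both reductions identical in form),
$$ (AJ)_{i,j}=\sum_{k=1}^{n}A_{i,k}J_{k,j}=A_{i,\,n+1-j},\qquad 1\le i,j\le n. $$
Hence the $j$-th column of $AJ$ is the $(n+1-j)$-th column of $A$, which is precisely the reflection of $A$ across its vertical middle axis, proving the second claim.

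There is no genuine analytic obstacle here; the statement is elementary and follows entirely from the delta-reduction above. The only point requiring care is the bookkeeping of conventions, namely matching the algebraic statement ``row $i\mapsto$ row $n+1-i$'' to the geometric phrase ``horizontal symmetry'', and likewise ``column $j\mapsto$ column $n+1-j$'' to ``vertical symmetry''. Making this correspondence between the reversed row index and the top-bottom flip (respectively the reversed column index and the left-right flip) explicit is the sole place where an error could slip in.
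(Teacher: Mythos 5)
Your computation is correct and is the standard delta-reduction argument; the paper itself states this proposition without proof, treating it as an elementary observation, and your verification $(JA)_{i,j}=A_{n+1-i,\,j}$, $(AJ)_{i,j}=A_{i,\,n+1-j}$ is exactly the argument one would supply.
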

\begin{df}\label{df_centrosym}
	A vector $v$ of order $n$ is symmetric if it satisfies $$ Jv = v,$$
	which reads $$v_i = v_{n+1-i}, \quad 1 \leq i \leq n.$$
	A vector $v$ of order $n$ is skew symmetric if it satisfies $$ Jv = -v,$$
	which reads $$v_i = -v_{n+1-i}, \quad 1 \leq i \leq n.$$
	A matrix $A$ of order $n$ is centrosymmetric if it satisfies $$ JAJ = A,$$
	which reads $$A_{i,j} = A_{n+1-i,n+1-j}, \quad 1 \leq i,j \leq n.$$
\end{df}
We denote by $\mathcal{C}_n$ the set of both symmetric and centrosymmetric real matrices of order~$n$. By Definition \ref{df_centrosym} it directly follows:
\begin{lemme}\label{lem3.12}
    Any matrix which is centrosymmetric preserves the space of the symmetric vectors and the space of the skew symmetric vectors.
\end{lemme}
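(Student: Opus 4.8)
The plan is to reduce the defining condition of centrosymmetry to a \emph{commutation} relation with $J$, after which both preservation statements drop out with a single line of computation each. First I would record the basic fact that the anti-diagonal matrix $J = \text{antidiag}(1,n)$ is an involution, i.e. $J^2 = I_n$ where $I_n$ is the identity matrix of order $n$; this is immediate from the explicit description in the Definition of $J$, since flipping a vector upside-down twice returns it unchanged (equivalently, $(J^2)_{i,k} = \sum_\ell J_{i,\ell}J_{\ell,k} = \delta_{i,k}$ because $J_{i,\ell} = 1$ forces $\ell = n+1-i$ and then $J_{\ell,k}=1$ forces $k = n+1-\ell = i$).

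Next, starting from the centrosymmetry identity $JAJ = A$ of Definition \ref{df_centrosym}, I would multiply on the right by $J$ and use $J^2 = I_n$ to obtain $JA = AJ$. In other words, a centrosymmetric matrix commutes with $J$, and this is the only structural input needed. With this commutation in hand, suppose $v$ is symmetric, that is $Jv = v$; then $J(Av) = (JA)v = (AJ)v = A(Jv) = Av$, so $Av$ is again symmetric. Similarly, if $v$ is skew symmetric, that is $Jv = -v$, then $J(Av) = (JA)v = A(Jv) = -Av$, so $Av$ is skew symmetric. This establishes that $A$ leaves invariant both the subspace of symmetric vectors and the subspace of skew symmetric vectors.

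There is no genuine obstacle in this Lemma: the entire content is the passage from $JAJ = A$ to $JA = AJ$, which rests only on $J$ being an involution. The one point deserving a moment of care is precisely the verification that $J^2 = I_n$, and I would make sure to state it explicitly rather than leave it implicit, since everything else is formal. If one prefers a coordinate proof avoiding the commutation trick, the same conclusion follows directly from the entrywise reading $A_{i,j} = A_{n+1-i,n+1-j}$ together with $v_i = \pm v_{n+1-i}$, by checking that $(Av)_i = \pm (Av)_{n+1-i}$ via the index substitution $j \mapsto n+1-j$ in the sum $(Av)_i = \sum_j A_{i,j} v_j$; but the operator-level argument is cleaner and is the one I would present.
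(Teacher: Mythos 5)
Your proof is correct and takes essentially the same approach as the paper: both arguments combine the identity $JAJ=A$ with $Jv=\pm v$ to show $J(Av)=\pm Av$, yours routing through the commutation $JA=AJ$ (via $J^2=I_n$) and the paper's substituting $v=Jv$ directly. The difference is purely cosmetic.
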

\begin{proof}
    Let $A$ be a centrosymmetric matrix, and~$v$ a symmetric vector (the same proof applies for a skew symmetric vector). Since $v$ is symmetric, we have $Av = AJv$, so $JAv = JAJv = Av$ because $A$ is centrosymmetric, meaning $Av$ is still a symmetric vector.
\end{proof}
Moreover, we have the following lemmas (see~\cite[Lemma~2, Lemma~3, Theorem~2]{Cantoni-Butler}):
\begin{lemme}\label{lemme_CB_pair}
	Suppose that the integer $n \geq 2$ is even. Let $X\in\mathcal{C}_n$. Then $X$ takes the form
	$$X=\begin{pmatrix}
	A & ^tC  \\[2pt]
	C & JAJ                 
	\end{pmatrix}, $$
	with $A,C \in \mathcal{M}_{n/2}(\R)$ two matrices that satisfy $^tA=A$ and $^tC = JCJ.$ Furthermore, $X$ is orthogonally similar to the matrix
	$$\begin{pmatrix}
	A-JC & 0 \\[2pt]
	0 & A+JC
	\end{pmatrix},$$
	and the eigenvalues of $X$ are the eigenvalues of both $A+JC$ and $A-JC$. Note that the eigenvectors of $X$ corresponding to the eigenvalues of $A+JC$ are symmetric, whereas those corresponding to the eigenvalues of $A-JC$ are skew symmetric.
\end{lemme}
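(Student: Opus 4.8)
The plan is to use the orthogonal splitting of $\R^n$ into symmetric and skew-symmetric vectors, which $X$ leaves invariant by Lemma~\ref{lem3.12}, and to read off the two diagonal blocks through a single explicit orthogonal change of basis. Throughout, write $n=2m$ and note that in the block formulas the symbol $J$ stands for the exchange matrix $J_m$ of order $m$ (so $J_m^2=I_m$), while the full exchange matrix of order $n$ decomposes as $\begin{pmatrix} 0 & J_m \\ J_m & 0\end{pmatrix}$.

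First I would establish the block form. Partitioning $X=\begin{pmatrix} A & B \\ C & D\end{pmatrix}$ into $m\times m$ blocks, the symmetry ${}^tX=X$ gives ${}^tA=A$, ${}^tD=D$ and $B={}^tC$, whereas carrying out the block product in the centrosymmetry relation $JXJ=X$ (with the order-$n$ matrix $J$) yields $D=J_mAJ_m$ and $B=J_mCJ_m$. Combining $B={}^tC$ with $B=J_mCJ_m$ gives precisely the asserted relation ${}^tC=J_mCJ_m=JCJ$, and hence the stated form of $X$.

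Then I would exhibit the orthogonal similarity directly. The skew-symmetric vectors are exactly those of the shape $\tfrac{1}{\sqrt2}\begin{pmatrix} p \\ -J_mp\end{pmatrix}$ and the symmetric ones those of the shape $\tfrac{1}{\sqrt2}\begin{pmatrix} p \\ J_mp\end{pmatrix}$; a one-line check using $J_m^2=I_m$ shows these two families are mutually orthogonal, so the matrix
$$U=\frac{1}{\sqrt2}\begin{pmatrix} I_m & I_m \\ -J_m & J_m\end{pmatrix},$$
whose first $m$ columns form an orthonormal basis of the skew-symmetric subspace and whose last $m$ columns form one of the symmetric subspace, is orthogonal ($U^tU=I_n$). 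Substituting $D=J_mAJ_m$ and ${}^tC=J_mCJ_m$ into $U^tXU$ and repeatedly using $J_m^2=I_m$, the two off-diagonal blocks cancel and the diagonal blocks come out as $A-J_mC$ and $A+J_mC$, so that
$$U^tXU=\begin{pmatrix} A-JC & 0 \\ 0 & A+JC\end{pmatrix}.$$

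Finally, since $U^t=U^{-1}$, the spectrum of $X$ is the union of the spectra of $A-JC$ and $A+JC$; as the first $m$ columns of $U$ span the skew-symmetric subspace and the last $m$ the symmetric one, the eigenvectors attached to $A+JC$ are symmetric and those attached to $A-JC$ are skew-symmetric, as claimed. The computation is elementary everywhere, and the only delicate point is the vanishing of the off-diagonal blocks of $U^tXU$: this is not a consequence of symmetry alone but genuinely uses the centrosymmetry relation $B=J_mCJ_m$ (equivalently, it is the content of Lemma~\ref{lem3.12} that this invariant decomposition exists). A slicker but less self-contained alternative would be to invoke Lemma~\ref{lem3.12} to assert block-diagonality at once and then identify the blocks by evaluating $X$ on the vectors $\tfrac{1}{\sqrt2}\begin{pmatrix} e_i \\ \pm J_m e_i\end{pmatrix}$; I would keep the explicit $U$ since it delivers the orthogonal-similarity statement at no extra cost.
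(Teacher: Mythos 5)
Your proof is correct. Note that the paper does not actually prove this lemma: it is quoted from Cantoni--Butler (Lemma~2, Lemma~3, Theorem~2 of that reference) and the reader is referred there for the proofs, so there is nothing in the paper to compare against. Your argument --- deriving the block structure from ${}^tX=X$ and $JXJ=X$, and then conjugating by the explicit orthogonal matrix $U=\tfrac{1}{\sqrt2}\begin{pmatrix} I & I\\ -J & J\end{pmatrix}$ whose columns split $\R^n$ into the skew-symmetric and symmetric subspaces --- is exactly the standard proof from that reference, and all the block computations check out (in particular ${}^tCJ=JCJ\cdot J=JC$ is what makes the off-diagonal blocks cancel). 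The identification of which eigenvectors are symmetric and which are skew-symmetric also follows correctly from which half of the columns of $U$ carries each diagonal block.
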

\begin{lemme}\label{lemme_CB_impair}
	Suppose that the integer $n \geq 1$ is odd. Let $X\in\mathcal{C}_n$. Then $X$ takes the form
	$$X=\begin{pmatrix}
	A & x & ^tC  \\[2pt]
	^tx & q & ^txJ  \\[2pt]
	C & Jx & JAJ          
	\end{pmatrix}, $$
	with $A,C \in \mathcal{M}_{m}(\R)$, with $m=\frac{n-1}{2}$ two matrices that satisfy $^tA=A$ and $^tC = JCJ,$ and where $q$ is a real number and $x$ a real vector of order $m=\frac{n-1}{2}.$ Furthermore, $X$ is orthogonally similar to the matrix
	$$\begin{pmatrix}
	A-JC & 0 & 0  \\[2pt]
	0 & q & \sqrt{2}\,\!^tx  \\[2pt]
	0 & \sqrt{2}\,\!x & A+JC         
	\end{pmatrix},$$
	and the eigenvalues of $X$ are the eigenvalues of both the matrices $$Y=\begin{pmatrix} A+JC &  \sqrt{2}\,\!x\\[2pt] \sqrt{2}\,\!^tx & q \end{pmatrix} \quad \text{and} \quad A-JC.$$
	Note that the eigenvectors of $X$ corresponding to the eigenvalues of $Y$ are symmetric, whereas those corresponding to the eigenvalues of $A-JC$ are skew symmetric.
\end{lemme}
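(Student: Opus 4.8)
The plan is to follow the Cantoni--Butler strategy, reducing the spectral analysis of $X$ to that of two smaller matrices by exploiting the $\pm 1$-eigenspace decomposition of $\R^n$ under $J$. Throughout I write $n = 2m+1$ and split $\R^n = \R^m \oplus \R \oplus \R^m$, so that $J$ itself has the block form
$$ J = \begin{pmatrix} 0 & 0 & J_m \\ 0 & 1 & 0 \\ J_m & 0 & 0 \end{pmatrix}, $$
where $J_m \in \mathcal{M}_m(\R)$ is the $m\times m$ antidiagonal and $J_m^2 = I_m$. Note that the symbol $J$ appearing inside the reduced blocks $A \pm JC$ and $Y$ of the statement stands for this $J_m$.

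First I would establish the block form of $X$. Writing a general symmetric $X$ in conformal $3\times 3$ blocks and imposing ${}^tX = X$ forces the diagonal blocks to be symmetric and identifies the off-diagonal blocks pairwise. Imposing the centrosymmetry relation $JXJ = X$ and using the block form of $J$ above then yields, block by block, the identities $D = J_m A J_m$ and $B = J_m C J_m = {}^tC$ (so that ${}^tC = J_m C J_m$), together with the fact that the middle row and column are governed by a single vector $x$ and scalar $q$, the lower component being $J_m x$. This is exactly the claimed form of $X$.

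Next I would introduce the orthogonal matrix
$$ P = \frac{1}{\sqrt 2}\begin{pmatrix} I_m & 0 & I_m \\ 0 & \sqrt 2 & 0 \\ -J_m & 0 & J_m \end{pmatrix}, $$
whose first $m$ columns span the skew symmetric subspace $\{(u,0,-J_m u)\}$ and whose last $m+1$ columns span the symmetric subspace $\{(u,s,J_m u)\}$; a short block computation using $J_m^2 = I_m$ confirms ${}^tP P = I_n$. The heart of the proof is the evaluation of ${}^tP X P$ by block multiplication. Computing $XP$ column-block by column-block, the relation ${}^tC = J_m C J_m$ collapses the relevant products into the combinations $A - J_m C$, $A + J_m C$ and $J_m(A \pm J_m C)$; multiplying on the left by ${}^tP$ and repeatedly using $J_m^2 = I_m$ then forces every off-diagonal block to vanish, leaving
$$ {}^tP X P = \begin{pmatrix} A - J_m C & 0 & 0 \\ 0 & q & \sqrt 2\,{}^tx \\ 0 & \sqrt 2\, x & A + J_m C \end{pmatrix}. $$
The spectral claims follow at once: the eigenvalues of $X$ are those of $A - J_m C$ together with those of the lower-right $(m+1)\times(m+1)$ block, which is permutation-similar (hence orthogonally similar) to $Y$; and since the columns of $P$ feeding the $A-J_mC$ block are skew symmetric while those feeding the $Y$-block are symmetric, the associated eigenvectors of $X$ inherit these symmetries.

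The main obstacle I anticipate is purely computational: the bookkeeping of the $3\times 3$ block product ${}^tP X P$, where one must track the actions of $J_m$ carefully and check that all cross terms cancel. The only non-formal inputs are the two algebraic identities $J_m^2 = I_m$ and ${}^tC = J_m C J_m$; once these are in hand the vanishing of the off-diagonal blocks is automatic, and no estimate or limiting argument is required.
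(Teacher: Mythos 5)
Your proof is correct. The paper itself does not prove this lemma --- it explicitly defers to \cite[Lemma 2, Lemma 3, Theorem 2]{Cantoni-Butler} --- and your argument is precisely the classical one from that reference: the block form follows from imposing ${}^tX=X$ and $JXJ=X$ blockwise, and conjugation by the orthogonal matrix $P=\frac{1}{\sqrt2}\left(\begin{smallmatrix} I_m & 0 & I_m \\ 0 & \sqrt2 & 0 \\ -J_m & 0 & J_m\end{smallmatrix}\right)$, whose columns span the skew symmetric and symmetric subspaces, does yield the stated block-diagonal form (I checked the block products; the identities ${}^tC=J_mCJ_m$ and $J_m^2=I_m$ make all cross terms vanish exactly as you claim, and the lower-right block is permutation-similar to $Y$).
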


\subsection{Study of the matrices \texorpdfstring{$(B^{(j)})$}{}}
\begin{prop}
    For all $j \geq 1,$ the real matrix $B^{(j)}\in \mathcal{M}_{j+1}(\R)$ is both symmetric and centrosymmetric.
\end{prop}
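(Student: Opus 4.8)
The plan is to read the entries of $B^{(j)}$ off directly from formula~\eqref{exp_B} and then check the two defining relations $^tB^{(j)}=B^{(j)}$ and $JB^{(j)}J=B^{(j)}$ by elementary factorial identities. Fix $j\geq 1$ and index the rows and columns of $B^{(j)}$ by $k,\ell\in\{0,1,\dots,j\}$. Extracting the coefficient of $\ov{A_{jk}}A_{j\ell}$ from the bracket in~\eqref{exp_B} (together with the overall factor $\tfrac12$), I would record the splitting of $B^{(j)}$ into a constant ``all-ones'' part $\tfrac{j!}{2^{j+1}}$ occupying every entry --- coming from the Hamiltonian term $\tfrac{j!}{2^j}\sum_\ell A_{j\ell}$ --- plus a tridiagonal part coming from $(j-4)A_{jk}-(k+1)A_{j,k+1}-(j-k+1)A_{j,k-1}$. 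Concretely,
\[
B_{k\ell}^{(j)}=\frac{j!}{2^{j+1}}+\frac{k!\,(j-k)!}{8}\Big((j-4)\delta_{\ell,k}-(k+1)\delta_{\ell,k+1}-(j-k+1)\delta_{\ell,k-1}\Big),
\]
the conventions $A_{j,-1}=A_{j,j+1}=0$ absorbing the boundary indices.

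For symmetry, the all-ones part and the diagonal are trivially symmetric, and entries with $|k-\ell|\geq 2$ all equal the constant $\tfrac{j!}{2^{j+1}}$, so only the two adjacent diagonals require attention. I would compute $B_{k,k+1}^{(j)}=\tfrac{j!}{2^{j+1}}-\tfrac{(k+1)!\,(j-k)!}{8}$ using $k!\,(k+1)=(k+1)!$, and $B_{k+1,k}^{(j)}=\tfrac{j!}{2^{j+1}}-\tfrac{(k+1)!\,(j-k-1)!\,(j-k)}{8}$; the identity $(j-k-1)!\,(j-k)=(j-k)!$ shows these coincide, giving $^tB^{(j)}=B^{(j)}$.

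For centrosymmetry, I first translate the definition (stated in $1$-indexing as $M_{p,q}=M_{n+1-p,\,n+1-q}$ with $n=j+1$) into the reflection $k\mapsto j-k$ on the $0$-indexed coefficients, so that the target relation becomes $B_{k\ell}^{(j)}=B_{j-k,\,j-\ell}^{(j)}$. The constant part is manifestly invariant; the diagonal entries are invariant because $k!\,(j-k)!$ is unchanged under $k\mapsto j-k$; and the entry linking $k$ and $k+1$ is invariant because its coefficient $(k+1)!\,(j-k)!$ is unchanged under $k\mapsto j-k-1$, which is precisely the reflection of the pair $\{k,k+1\}$. This yields $JB^{(j)}J=B^{(j)}$.

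The computation is routine; the only care needed is bookkeeping --- translating the centrosymmetry condition correctly between the $1$-indexing of Definition~\ref{df_centrosym} and the $0$-indexing of the coefficients $B_{k\ell}^{(j)}$, and handling the boundary cases $k=0$ and $k=j$ through the conventions $A_{j,-1}=A_{j,j+1}=0$ so that no spurious subdiagonal or superdiagonal entries appear. I do not expect any genuine obstacle beyond these index conventions.
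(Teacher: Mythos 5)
Your proof is correct and follows exactly the paper's approach: the paper simply asserts that the identities $B_{k,\ell}^{(j)} = B_{\ell,k}^{(j)} = B_{j-k,j-\ell}^{(j)}$ are ``easy by the expression \eqref{exp_B}'', and you carry out precisely that verification, with the correct entry formula and the right factorial identities on the off-diagonal terms.
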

\begin{proof}
    It is enough to show that $$B_{k,\ell}^{(j)} = B_{\ell,k}^{(j)} = B_{j-k,j-\ell}^{(j)}, \qquad 0 \leq k,\ell \leq j,$$ which is easy by the expression \eqref{exp_B}.
\end{proof}

In particular, by Lemma \ref{lem3.12}, $B^{(j)}$ preserves the space $\mathcal{F}^{(j)}$ of the symmetric vectors of size $j+1$ and the space $\mathcal{G}^{(j)}$ of the skew symmetric vectors of size $j+1$.

\begin{prop}\label{Prop_B0}
	For all $u \in L^{2,1}_{\E}$, $B(u)\geq 0$.
\end{prop}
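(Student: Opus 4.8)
The plan is to use the block form $B(u)=\sum_{j\ge 0}\langle A^{(j)},B^{(j)}A^{(j)}\rangle$ coming from \eqref{exp_B}, where $A^{(j)}=(A_{jk})_{0\le k\le j}$, together with the decisive observation that each $A^{(j)}$ is a \emph{symmetric} vector: since $A_{jk}=\frac{a_ka_{j-k}}{\sqrt{k!(j-k)!}}=A_{j,j-k}$, we have $A^{(j)}\in\mathcal F^{(j)}$. The matrix $B^{(j)}$ is \emph{not} positive semidefinite in general (it has negative eigenvalues on the skew block $\mathcal G^{(j)}$), so the symmetry of the $A^{(j)}$ is exactly what makes the statement true. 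As $B^{(j)}$ is real symmetric and preserves $\mathcal F^{(j)}$ by Lemma \ref{lem3.12}, it is enough to prove that $B^{(j)}\big|_{\mathcal F^{(j)}}$ is positive semidefinite for every $j$. By Lemmas \ref{lemme_CB_pair} and \ref{lemme_CB_impair}, the spectrum of $B^{(j)}\big|_{\mathcal F^{(j)}}$ equals that of the half-size matrix $A+JC$ (if $j+1$ is even) or $Y$ (if $j+1$ is odd), so the proposition reduces to the positivity of these reduced matrices.

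To expose their structure I would rescale by setting $c_k=\sqrt{k!(j-k)!}\,A_{jk}$, which sends a symmetric $A^{(j)}$ to a symmetric vector $c=(c_k)$. A direct computation from \eqref{exp_B} turns the $j$-th term into
\[
Q_j(c)=\frac{1}{2^{j+1}}\Big|\sum_{k=0}^{j}\sqrt{\tbinom{j}{k}}\,c_k\Big|^2+\frac18\Big((j-4)\sum_{k=0}^j|c_k|^2-\langle c,T^{(j)}c\rangle\Big),
\]
where $T^{(j)}$ is the zero-diagonal symmetric tridiagonal (Jacobi) matrix with $T^{(j)}_{k,k+1}=\sqrt{(k+1)(j-k)}$. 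The first summand is a positive rank-one form built on $w=\big(\sqrt{\binom jk}\big)_k$, and the essential point is that $w$ is the top eigenvector of $T^{(j)}$: one checks $T^{(j)}w=j\,w$ and $\|w\|^2=2^j$. The key step is then to split $c=t\hat w+c^\perp$ with $\hat w=w/2^{j/2}$. Since $\hat w$ is an eigenvector of the symmetric matrix $T^{(j)}$ the cross terms vanish, and the rank-one term exactly cancels the top-eigenvalue contribution, leaving
\[
Q_j(c)=\frac18\Big((j-4)\|c^\perp\|^2-\langle c^\perp,T^{(j)}c^\perp\rangle\Big).
\]
Positivity of $Q_j$ thus amounts to the single bound $\langle c^\perp,T^{(j)}c^\perp\rangle\le(j-4)\|c^\perp\|^2$ for symmetric $c^\perp\perp\hat w$, i.e. that the largest eigenvalue of $T^{(j)}$ on $\mathcal F^{(j)}\cap\hat w^\perp$ is at most $j-4$.

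This last bound is the crux, and it is exactly where the symmetry constraint is indispensable. The eigenvalues of $T^{(j)}$ are $j,j-2,\dots,-j$, so a parity-blind estimate would only give $j-2$ and fail, leaving a spurious $-\tfrac14\|c^\perp\|^2$. The remedy is that the eigenvectors of $T^{(j)}$ alternate in parity under $k\mapsto j-k$: the eigenvector for $j-2n$ can be written $v^{(n)}_k=\sqrt{\binom jk}\,\mathcal K_n(k)$ with $\mathcal K_n$ the degree-$n$ binary Krawtchouk polynomial, and the reflection identity $\mathcal K_n(j-k)=(-1)^n\mathcal K_n(k)$ makes $v^{(n)}$ symmetric precisely when $n$ is even. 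Hence the eigenvalues carried by $\mathcal F^{(j)}$ are $j,j-4,j-8,\dots$, and deleting the top value $j$ leaves $j-4$, which is exactly the bound required. I would prove this parity statement via these Krawtchouk polynomials (equivalently, via the $\pi$-rotation about the $x$-axis in the spin-$j/2$ picture where $T^{(j)}$ is twice the $x$-component of angular momentum); it meshes with the centrosymmetric reduction, $\mathcal F^{(j)}$ being the symmetric block singled out by Lemmas \ref{lemme_CB_pair}–\ref{lemme_CB_impair}. Granting it, every $Q_j\ge0$ and summation over $j$ gives $B(u)\ge0$; the kernel, spanned by the $j$- and $(j-4)$-eigenvectors, accounts for the vanishing of $B^{(j)}\big|_{\mathcal F^{(j)}}$ for $j\le 3$ and for the equality cases at $\mu=\tfrac12$.
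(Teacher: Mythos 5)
Your argument is correct, and it reaches the conclusion by a genuinely different route from the paper's. Both proofs begin the same way (the symmetry $A_{jk}=A_{j,j-k}$ reduces everything to the positivity of $B^{(j)}$ on $\mathcal F^{(j)}$), but from there the paper passes to the Cantoni--Butler half-size matrix $S^{(j)}=T^{(j)}+K^{(j)}$, invokes Wilkinson's interlacing for the rank-one perturbation $K^{(j)}$, and controls the number of non-positive eigenvalues of the tridiagonal part through a Sturm sequence whose principal minors are computed in closed form via the generating function of Lemma~\ref{lemme_polynomial}. You instead keep the full $(j+1)\times(j+1)$ form and, after the rescaling $c_k=a_ka_{j-k}$, recognize the tridiagonal part as the symmetrized Sylvester--Kac matrix (the spin-$j/2$ operator $2J_x$), whose top eigenvector is exactly the vector $w=\bigl(\sqrt{\binom{j}{k}}\bigr)_k$ carrying the positive rank-one term; the resulting exact cancellation, combined with the classical facts that the spectrum of this matrix is $\{j-2n\}_{0\le n\le j}$ and that the eigenvector for $j-2n$ (a Krawtchouk vector) is symmetric precisely when $n$ is even, yields $(j-4)\|c^\perp\|^2-\langle c^\perp,T^{(j)}c^\perp\rangle\ge 0$ on $\mathcal F^{(j)}\cap \hat w^\perp$, hence $Q_j\ge0$. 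Your computations check out: the expression for $Q_j$, the identities $T^{(j)}w=jw$ and $\|w\|^2=2^j$, the cancellation of the $|t|^2$ terms, and the parity claim (a one-line consequence of the generating function $\sum_n\mathcal K_n(x)z^n=(1+z)^{j-x}(1-z)^x$ --- amusingly the same function that drives Lemma~\ref{lemme_polynomial}) are all correct, and your description of the kernel matches the eigenvectors \eqref{vecteurs_propres_B_impair}, \eqref{vecteurs_propres_B_pair1}, \eqref{vecteurs_propres_B_pair2}. The only step you leave to be written out in full is the Krawtchouk eigenvector/parity statement, which is classical and for which you indicate two valid proofs. What your route buys is a complete and conceptual spectral picture of $B^{(j)}|_{\mathcal F^{(j)}}$ --- in particular an a priori explanation of why its kernel is exactly two-dimensional and why the constant $j-4$ is sharp; what the paper's route buys is a self-contained elementary argument that never requires the explicit diagonalization of the Kac matrix.
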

\begin{proof}
	Since the $(A_{j\ell})$ are symmetric, it is enough to show that, for all $j \in \N$, $B^{(j)}_{| \mathcal{F}^{(j)}}$ is non-negative. We split the proof into two parts. The case $j$ odd is obtained in Lemma \ref{lemme_cas_impair} below, and the case $j$ even is obtained in Lemma \ref{lemme_cas_pair} below. The result follows.
\end{proof}

\begin{remark}\label{RemBE}
	Even though we can write $E$ in the same way with centrosymmetric matrices, the same argument for $E$ instead of $B$ does not hold true: for $j=3$, the corresponding matrix has a negative eigenvalue. Actually, the coefficient in $B$ are better arranged than those in $E$, because the action of $R_\alpha$ leaves $B$ invariant, whereas $E$ is not left invariant.
\end{remark}

\begin{lemme}\label{lemme_cas_impair}
	For all $j \geq 1$ odd, we have $B^{(j)}_{| \mathcal{F}^{(j)}}$ non-negative.
\end{lemme}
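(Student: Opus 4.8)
The plan is to diagonalise $B^{(j)}$ completely after a diagonal rescaling, recognising it as a shifted spin matrix plus a rank-one term, and then to read off the symmetric part by a parity argument. Since $j$ is odd the order $n=j+1$ is even, so Lemma~\ref{lemme_CB_pair} applies and, writing $B^{(j)}$ in its block form, the claim $B^{(j)}_{|\mathcal{F}^{(j)}}\geq0$ is equivalent to $A+JC\geq0$; but rather than computing these blocks I would diagonalise the whole of $B^{(j)}$, which by Lemma~\ref{lem3.12} preserves $\mathcal{F}^{(j)}$, and isolate the symmetric eigenvectors afterwards.

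First I rewrite \eqref{exp_B} in the rescaled variables $t_k=\sqrt{c_k}\,v_k$, where $c_k:=k!(j-k)!$. Using $c_k=c_{j-k}$ together with the identities $(k+1)c_k=(j-k)c_{k+1}$, $(k+1)\sqrt{c_k/c_{k+1}}=\sqrt{(k+1)(j-k)}$, and $\sum_k v_k=(j!)^{-1/2}\sum_k\binom{j}{k}^{1/2}t_k$, a direct computation shows that the quadratic form $2B^{(j)}$, expressed in the variables $t$, is the one associated with
$$\widetilde{B}=\frac{j-4}{4}\,I-\frac12\,T+g\,{}^{t}g,\qquad g_k=\binom{j}{k}^{1/2}2^{-j/2},$$
where $T$ is the symmetric tridiagonal (Jacobi) matrix with zero diagonal and off-diagonal entries $\tfrac12\sqrt{(k+1)(j-k)}$. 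This $T$ is the matrix of the spin-$j/2$ operator $J_x$, with simple spectrum $\{\tfrac j2,\tfrac j2-1,\dots,-\tfrac j2\}$, and one checks directly that $g$ is a unit vector and is the eigenvector of $T$ for the top eigenvalue $j/2$. Hence $g$ is an eigenvector of $\widetilde{B}$ as well, the $T$-eigenbasis diagonalises $\widetilde{B}$, and the eigenvalue of $\widetilde B$ on the eigenvector for the $k$-th largest eigenvalue of $T$ equals $0$ for $k=0$ (from $-1+|g|^2$) and $\frac{k-2}{2}$ for $k=1,\dots,j$. The unique negative value is $-\tfrac12$, attained only at $k=1$.

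The crux — and the step I expect to be the main obstacle — is to show that this single negative mode is skew-symmetric, hence absent from $\mathcal{F}^{(j)}$. I would combine three facts: (a) $T$ is invariant under the flip $v_k\mapsto v_{j-k}$, so each of its simple eigenvectors has definite parity; (b) by the Sturm--Gantmakher--Krein oscillation theorem for Jacobi matrices with positive off-diagonals, the eigenvector for the $k$-th largest eigenvalue has exactly $k$ sign changes; and (c) since $j+1$ is even, a flip-symmetric vector has an even number of sign changes and a flip-skew one an odd number (split the vector into its two mirror halves and count, noting no sign change occurs across the centre in the symmetric case and exactly one in the skew case). Together these force the $k$-th eigenvector to be symmetric for $k$ even and skew for $k$ odd; in particular the $k=1$ mode is skew. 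Consequently every eigenvector lying in $\mathcal{F}^{(j)}$ corresponds to an even index $k$ and carries eigenvalue $\frac{k-2}{2}\geq0$, which yields $B^{(j)}_{|\mathcal{F}^{(j)}}\geq0$.

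Finally I would remark that this scheme is insensitive to the parity of $j$: the negative mode is always the $k=1$ eigenvector, which is always skew, so the same argument also proves Lemma~\ref{lemme_cas_pair} (for even $j$ the length $j+1$ is odd and a skew eigenvector has a vanishing central component, which supplies the extra sign change in step (c)). The even/odd distinction is only forced upon one if, instead, one chooses to compute the reduced Cantoni--Butler matrices $A\pm JC$ and analyse them directly.
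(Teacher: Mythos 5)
Your proof is correct, but it takes a genuinely different route from the paper's. After the diagonal congruence $t_k=\sqrt{c_k}\,v_k$ (which is legitimate here because $D=\mathrm{diag}(c_k^{-1/2})$ commutes with the flip $J$, hence preserves $\mathcal{F}^{(j)}$ and $\mathcal{G}^{(j)}$, and a congruence preserves non-negativity on an invariant subspace even though it changes eigenvalues), you identify $2\,DB^{(j)}D$ as $\frac{j-4}{4}I-\frac12 J_x+g\,{}^tg$ with $J_x$ the spin-$j/2$ matrix and $g$ its positive unit top eigenvector; since the rank-one term is built on an eigenvector of $J_x$, the whole matrix is diagonal in the $J_x$-eigenbasis with eigenvalues $0,-\frac12,0,\frac12,\dots,\frac{j-2}{2}$, and the unique negative mode is shown to be skew-symmetric by combining centrosymmetry (definite parity of each simple eigenvector), the Gantmacher--Krein oscillation theorem ($k$ sign changes for the $k$-th eigenvector), and the elementary observation that for vectors of even length symmetry forces an even number of sign changes and skewness an odd number (the middle entries cannot both vanish, by the three-term recurrence). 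The paper instead halves the matrix via the Cantoni--Butler block reduction, splits $S^{(j)}=T^{(j)}+K^{(j)}$ into tridiagonal plus rank one, controls the rank-one shift by Wilkinson's bordered-diagonal interlacing, and bounds the number of non-positive eigenvalues of $T^{(j)}$ through a Sturm sequence whose generating function is computed explicitly. Your argument is shorter, treats both parities of $j$ uniformly, and yields strictly more information: the exact signature of every $B^{(j)}$ and an explicit description of its kernel (the $k=0$ and $k=2$ modes, which are precisely the vectors $v^{(j)},w^{(j)}$ the paper exhibits by hand in \eqref{vecteurs_propres_B_impair} and needs again in Lemma~\ref{cas_egalite_B}). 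The price is reliance on two classical inputs (the spectrum and top eigenvector of the spin matrix, and oscillation theory for Jacobi matrices with nonzero off-diagonals) where the paper is self-contained; if you wish to avoid oscillation theory, the parity $(-1)^k$ of the $k$-th eigenvector of $J_x$ under $k\mapsto j-k$ can also be read off from the explicit Krawtchouk/Wigner form of these eigenvectors.
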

\begin{proof}
We fix here an odd integer $j$ such that $j \geq 1$.

{\underline    {Step 1 : Reduction to a symmetric matrix of lower order.}} Denote by $J= \text{antidiag}(1,\frac{j+1}{2})$. Here we follow \cite{Cantoni-Butler}, and more precisely we use Sect.~\ref{sec_centro} above. The matrix $B^{(j)} \in \mathcal{M}_{j+1}(\R)$ can be written 
 $$B^{(j)}  =   \begin{pmatrix}
    A & JCJ  \\[2pt]
    C & JAJ                 
\end{pmatrix}.$$
By Lemma \ref{lemme_CB_pair}, the eigenvalues of $B^{(j)}_{| \mathcal{F}^{(j)}}$ are the eigenvalues of $S^{(j)}:=A+JC$. We shall then study the matrix $S^{(j)}$ instead of $B^{(j)}$. A quick computation gives 
$$B^{(1)}  = \frac18 \begin{pmatrix}
-1 & 1  \\[2pt]
1 &  -1                 
\end{pmatrix}, \qquad S^{(1)}=0,$$
\begin{equation*}
    B^{(3)}  = \frac18 \begin{pmatrix}
    -3 & -3 & 3&3\\[2pt]
    -3 & 1 & -1&3\\[2pt]
    3 & -1 & 1&-3\\[2pt]
    3 & 3 & -3&-3     
\end{pmatrix}, \qquad S^{(3)}=0,
\end{equation*}
and
\begin{equation*}
    B^{(5)}  = \frac38 \begin{pmatrix}
    45 & -35 & 5 & 5 & 5 & 5 \\[2pt]
    -35 & 13 & -11 & 5 & 5 & 5 \\[2pt]
    5 & -11 & 9 & -7 & 5 &  5\\[2pt]
    5 & 5 & -7 & 9 & -11 & 5 \\[2pt]
    5 & 5 & 5 & -11 & 13 & -35 \\[2pt]
    5 & 5 & 5 & 5 & -35 & 45
\end{pmatrix}, \qquad 
S^{(5)}= \frac34
\begin{pmatrix} 25 & -15 & 5\\[2pt]
    -15 & 9 & -3 \\[2pt]
    5 & -3 & 1 
\end{pmatrix}.
\end{equation*}
We can check that $S^{(5)} \geq 0,$ with $0$ being an eigenvalue with multiplicity $2$. Then, we can focus on the case $j \geq 7$ odd. In this case, we can check that $0$ is an eigenvalue of $S^{(j)}$ of multiplicity at least~$2$, and the eigenvectors associated are given by $v^{(j)}$ and $w^{(j)}$, where:
\begin{equation}\label{vecteurs_propres_B_impair}
    v_k^{(j)} = \frac{1}{(k-1)!(j-k+1)!}, \quad w_k^{(j)} = \frac{(k-1)(j-k+1)}{(k-1)!(j-k+1)!}; \qquad 1\leq k \leq \frac{j+1}{2}.
\end{equation}

If we write $B^{(j)} = (B_{k,\ell}^{(j)})_{1\leq k,\ell\leq j+1}$, we have 
$$B_{k,k}^{(j)} = \frac{j!}{2^{j+1}} + \frac{1}{8}(j-4)(k-1)!(j-k+1)!, \qquad 1\leq k \leq j+1,$$
$$B_{k,k+1}^{(j)} = B_{k+1,k}^{(j)} = \frac{j!}{2^{j+1}} - \frac{1}{8}k!(j-k+1)!, \qquad 1\leq k \leq j,$$
and $B_{k,\ell}^{(j)} =\dfrac{j!}{2^{j+1}}$ otherwise.

We consider $p = \frac{j+1}{2},$ and then write the matrix $S^{(j)}:=A+JC$ as 
$$ S^{(j)}:=T^{(j)} + K^{(j)} \in \mathcal{M}_{\frac{j+1}{2}}(\R) = \mathcal{M}_p(\R),$$
where $K^{(j)} = \frac{j!}{2^j}Ones(\frac{j+1}{2},\frac{j+1}{2}) = \frac{j!}{2^j}Ones\pa{p,p}$, and $T^{(j)}\in\mathcal{M}_p(\R)$ is a tridiagonal and symmetric matrix. More precisely, we have 
\begin{align*}
    &T^{(j)}_{k,k} = \frac{1}{8}(2p-5)(k-1)!(2p-k)!  & 1 \leq k \leq p-1,\\
    &T^{(j)}_{p,p} = \frac{1}{8}(p-5)(p-1)!p!, &\\
    &T^{(j)}_{k,k+1} = T^{(j)}_{k+1,k} = -\frac{1}{8}k!(2p-k)!& 1 \leq k \leq p-1.
\end{align*}

	{\underline    {Step 2 : Reduction to the study of a tridiagonal matrix.}}
For $m \geq 1$ an integer, we denote by~$\mathcal{O}_m(\R)$ the subset of $\mathcal{M}_m(\R)$ constituted of orthogonal matrices.
The matrix~$K^{(j)}$ defined above is a real symmetric matrix of rank one, with eigenvalue~$0$ of multiplicity~$p-1$. The only remaining eigenvalue of~$K$ is its trace $$\text{trace}(K^{(j)}) = p\frac{j!}{2^{j}} = \frac{(2p)!}{2^{2p}}=:\delta,$$ which is of multiplicity~$1$. Here we follow \cite[pages 94 to 98]{Wilkinson}. We give proofs of the lemmas for the sake of completeness. There exists, by the spectral theorem, an orthogonal matrix $R \in \mathcal{O}_p(\R)$ such that 
$$ ^tRK^{(j)}R = \text{diag}(\delta, 0 , \dots, 0).$$
Then we write 
$$^tR T^{(j)}R = \begin{pmatrix} \alpha & ^ta \\ a & T_{p-1} \end{pmatrix},$$
with $T_{p-1}$ a symmetric matrix of size $p-1$, $\alpha \in \R$ a scalar, and $a$ a vector of size $p-1$. Since $T_{p-1}$ is real and symmetric, we can write
$$ ^tUT_{p-1}U = \text{diag}(\alpha_1, \dots, \alpha_{p-1}), \qquad U \in \mathcal{O}_{p-1}(\R), $$
and where the $\alpha_i$ are the eigenvalues of $T_{p-1}$ counted with multiplicity. Then, we consider the matrix $$Q=R\begin{pmatrix} 1 & 0\\ 0 & U \end{pmatrix}.$$
We have $Q\in \mathcal{O}_p(\R)$ and we compute
\begin{align*}
    ^tQS^{(j)}Q &= \,^tQ(K^{(j)}+T^{(j)})Q = \,^tQK^{(j)}Q+  \,^tQT^{(j)}Q \\
    &= \begin{pmatrix} 1 & 0\\ 0 & ^tU \end{pmatrix}\, ^tRK^{(j)}R\begin{pmatrix} 1 & 0\\ 0 & U \end{pmatrix}+  \begin{pmatrix} 1 & 0\\ 0 & ^tU \end{pmatrix} \,^tRT^{(j)}R\begin{pmatrix} 1 & 0\\ 0 & U \end{pmatrix}\\
    &=\begin{pmatrix} 1 & 0\\ 0 & ^tU \end{pmatrix}\, \begin{pmatrix} \delta & 0\\0 & 0 \end{pmatrix}\begin{pmatrix} 1 & 0\\ 0 & U \end{pmatrix}+  \begin{pmatrix} 1 & 0\\ 0 & ^tU \end{pmatrix} \,\begin{pmatrix} \alpha & ^ta \\ a & T_{p-1} \end{pmatrix}\begin{pmatrix} 1 & 0\\ 0 & U \end{pmatrix} \\
    &=\begin{pmatrix} \delta & 0\\0 & 0 \end{pmatrix} + \begin{pmatrix} \alpha & ^taU \\ ^tUa & ^tUT_{p-1}U \end{pmatrix},
\end{align*}

where $\delta = \frac{(2p)!}{2^{2p}}.$ That is to say 
\begin{equation*}
    ^tQS^{(j)}Q = \underbrace{\begin{pmatrix} \alpha & b_1 & b_2 & \cdots & b_{p-1} \\
    b_1 & \alpha_1 & 0 & & \\
    b_2 & 0 & \alpha_2 & 0  & \\
    \vdots &  & 0 & \ddots  & 0 \\
    b_{p-1} &  &  & 0  & \alpha_{p-1}
    \end{pmatrix}}_{= ^tQT^{(j)}Q} 
    + \begin{pmatrix}
    \delta & 0 & 0 & \cdots & 0 \\
    0 & 0 & 0 & & \\
    0 & 0 & 0 & 0  & \\
    \vdots &  & 0 & \ddots  & 0 \\
    0 &  &  & 0  & 0
    \end{pmatrix}, \qquad b={^tU}a.
\end{equation*}

\begin{lemme}\label{lemma_eig1}\cite[pp. 95-96]{Wilkinson}
    We consider the bordered diagonal matrix 
    \begin{equation*}
        M := \begin{pmatrix} \alpha & b_1 & b_2 & \cdots & b_{p-1} \\  b_1 & \alpha_1 & 0 &   & \\  b_2 & 0 & \alpha_2 & 0  & \\  \vdots &  & 0 & \ddots  & 0 \\  b_{p-1} &  &  & 0  & \alpha_{p-1}  \end{pmatrix}.
    \end{equation*}
    Then:
    \begin{enumerate}
        \item If all the $\alpha_i$ are distinct, and for all $i, b_i\neq 0$, then the eigenvalues of $M$ are given by the solutions of the equation of unknown $\lambda \in \R$ 
        \begin{equation}\label{eig_bord_diag}
            \alpha = \lambda + \sum_{i=1}^{p-1} \frac{b_i^2}{\alpha_i-\lambda}.
        \end{equation}
        \item If some of the $b_i$ are null, then $\alpha_i$ is a eigenvalue of~$M$. The other eigenvalues are given by~\eqref{eig_bord_diag}.
        \item If $r \geq 2$ is an integer, and $i_1 < i_2 < \dots < i_{r}$ some integers satisfying $\alpha_{i_1}=\alpha_{i_2}= \cdots =\alpha_{i_r},$ then~$\alpha_{i_1}$ is an eigenvalue of~$M$ with multiplicity~$r-1$.
    \end{enumerate}
\end{lemme}

\begin{proof}
    If there exists $i$ such that $b_i=0$, it is clear that $\alpha_i$ is an eigenvalue of $M$, and we can extract from $M$ the sub-matrix $M'$ where we deleted the $(i+1)$-th line and column of $M$. The remaining eigenvalues of $M$ are exactly the eigenvalues of $M'$. We can repeat this process for all $j$ such that $b_j =0$. Now we treat the case where all the $b_i$ are non zeros. We compute the characteristic polynomial of $M$:
    \begin{equation}\label{charpoly}
    \rchi_{M}(X) = \text{det}(XI_p - M) = (X-\alpha)\prod_{i=1}^{p-1}(X-\alpha_i) - \sum_{k=1}^{p-1}b_k^2\prod_{i\neq k}(X-\alpha_i).
    \end{equation}
    We then divide by $\prod_{i=1}^{p-1}(\lambda-\alpha_i)$ the equation $\rchi_{M}(\lambda)=0$ to obtain 
    \begin{equation*}
        \alpha = \lambda - \sum_{k=1}^{p-1}\frac{b_k^2}{\lambda - \alpha_k},
    \end{equation*}
    which gives the first part of the result. \\
    Let's treat the case were the $\alpha_i$ are non distinct. If so, we write $t$ the number of distinct $\alpha_i$, and we denote by $r_1,\dots,r_t$ their multiplicities, with $p-1 = r_1+\cdots +r_t$. We notice in \eqref{charpoly} that $\rchi_{M}(X)$ can be factorised by $$\prod_{i=1}^{t}(X-\alpha_i)^{r_i-1}.$$
    We deduce that $\alpha_i$ is an eigenvalue of $M$ of multiplicity $(r_i-1).$
\end{proof}

\begin{lemme}\label{lemma_eig2}
	We denote by $\lambda_1 \leq \lambda_2 \leq \dots \leq \lambda_p$ and $\lambda_1' \leq \lambda_2' \leq \dots \leq \lambda_p'$ respectively the eigenvalues of $T^{(j)}$ and $S^{(j)}$. Then, there exists a family $(m_i)_{1\leq i \leq p}$ of real number such that 
\begin{equation*}
    \left\{
        \begin{array}{ll}
        \lambda_i' - \lambda_i = m_i\delta, \\
        0 \leq m_i \leq 1,\\
        \sum_{i=1}^p m_i = 1.
        \end{array}
    \right.
\end{equation*}
Furthermore, we have 
\begin{equation}\label{entrelacement_vp}
    \lambda_1 \leq \lambda_1' \leq \lambda_2 \leq \lambda_2' \leq \cdots \leq \lambda_p \leq\lambda_p'.
\end{equation}
\end{lemme}

\begin{proof}
If the $\alpha_i$ are not necessarily distinct, then both $T^{(j)}$ and $S^{(j)}$ have $\alpha_i$ as eigenvalue (say $\lambda_i$), and we can take $m_i=0$. Let's turn to the case when all the $\alpha_i$ are distinct.
We apply Lemma \ref{lemma_eig1} to the matrices $^tQS^{(j)}Q$ and $^tQT^{(j)}Q$, whose eigenvalues are those of $S^{(j)}$ and $T^{(j)}$ since $Q \in \mathcal{O}_p(\R)$. We obtain that the eigenvalues of $T^{(j)}$ solve the equation 
\begin{equation*}
    \alpha = \lambda + \sum_{k=1}^{p-1}\frac{b_k^2}{\alpha_k-\lambda},
\end{equation*}
and those of $S^{(j)}$ solve the equation
\begin{equation*}
    \alpha + \delta = \lambda + \sum_{k=1}^{p-1}\frac{b_k^2}{\alpha_k-\lambda}.
\end{equation*}
We then consider the function $$f: \lambda \longmapsto \lambda + \sum_{k=1}^{p-1}\frac{b_k^2}{\alpha_k-\lambda},$$
so that the previous equations can be written $\alpha = f(\lambda)$ and $\alpha+\delta = f(\lambda)$ respectively. We denote by $\lambda_1 \leq \lambda_2 \leq \dots \leq \lambda_p$ and $\lambda_1' \leq \lambda_2' \leq \dots \leq \lambda_p'$ respectively the eigenvalues of $T^{(j)}$ and $S^{(j)}$.
We compute 
$$\frac{df}{d\lambda} = 1 + \sum_{k=1}^{p-1}\frac{b_k^2}{(\alpha_k-\lambda)^2} > 1,$$
which means, since $(\alpha+\delta) - \alpha= \delta >0$, that 
$$0 \leq \lambda_i'-\lambda_i \leq \delta.$$

Note that we do not always have $0 < \lambda_i'-\lambda_i$ because $b_{i-1} =0$ implies $\lambda_i=\lambda_i'(=\alpha_{i-1})$ (see above), and we do not have  $\lambda_i'-\lambda_i < \delta$ when all the $b_i$ equal $0$. Then there exists $m_i$ a real number such that $0 \leq m_i \leq 1$ and $\lambda_i'-\lambda_i =m_i \delta.$
Furthermore, the sum of the eigenvalues of a matrix equals its trace, so we have 
$$ \delta \sum_{i=1}^{p}m_i = \sum_{i=1}^{p}(\lambda_i'-\lambda_i) = \text{trace}(S^{(j)}) - \text{trace}(T^{(j)}) = \pa{\alpha+\delta+\sum_{i=1}^{p-1}\alpha_i} - \pa{\alpha+\sum_{i=1}^{p-1}\alpha_i} = \delta,$$
and we deduce that $\sum_{i=1}^{p}m_i=1.$ Finally, 
\begin{equation*}
    \left\{
        \begin{array}{ll}
        \lambda_i' - \lambda_i = m_i\delta, \\
        0 \leq m_i \leq 1,\\
        \sum_{i=1}^p m_i = 1.
        \end{array}
    \right.
\end{equation*}
The inequalities \eqref{entrelacement_vp} come from the fact that there is exactly one solution of $f(\lambda) = \beta$ on each interval whose boundaries are two consecutive $\alpha_i$, for any $\beta \in \R$.
\end{proof}

\begin{prop}\label{prop_eig1}
    If we have $\lambda_3 > 0$, then $S^{(j)} \geq 0$.
\end{prop}
\begin{proof}
    If the third eigenvalue (ordered increasingly) of the matrix $T^{(j)}$ is positive, then $\lambda_i' \geq \lambda_i \geq \lambda_3 > 0, \forall i \geq 3$. Since $0$ is an eigenvalue of $S^{(j)}$ with multiplicity at least $2$, we have necessarily $\lambda_1' = \lambda_2' = 0$, and then $S^{(j)} \geq 0$.
\end{proof}


	{\underline    {Step 3 : Study of a Sturm sequence.}} This step follows the pages $300$ to $302$ of \cite{Wilkinson}. For convenience, we write:
$$T^{(j)} = \begin{pmatrix} a_1 & b_{1,2} & 0 & & \\  b_{1,2} & a_2 & b_{2,3} &   & \\  0 & b_{2,3} & a_3 & \ddots  & \\  &  & \ddots & \ddots  & b_{p-1,p} \\   &  &  & b_{p-1,p}  & a_{p}  \end{pmatrix}.$$
We denote by $\Delta_r(\lambda)$ the principal minor of order $r$ of the matrix $T^{(j)}-\lambda I_p$, which is $\det(A_r(\lambda))$, where: 
$$A_r(\lambda) = \begin{pmatrix} a_1-\lambda & b_{1,2} & 0 & & \\  b_{1,2} & a_2-\lambda & b_{2,3} &   & \\  0 & b_{2,3} & a_3 -\lambda & \ddots  & \\  &  & \ddots & \ddots  & b_{r-1,r} \\   &  &  & b_{r-1,r}  & a_{r}-\lambda  \end{pmatrix},$$
for $r \ge 1$. We also impose $\Delta_0(\lambda) = 1$. We then have $ \Delta_1(\lambda) = a_1 - \lambda$ and for all $2 \leq i \leq p$: 
	\begin{equation}\label{relat_deltak}
	\Delta_i(\lambda) = (a_i-\lambda) \Delta_{i-1}(\lambda) - b_{i-1,i}^2\Delta_{i-2}(\lambda).
	\end{equation}
We have the following lemma (see \cite[pages 300-302]{Wilkinson}): 

\begin{lemme}\label{lemme_mineurs}
	Let the $\Delta_i(\mu)$ be the principal minors evaluated in a certain real number $\mu$. If $s(\mu)$ stands for the number of correspondence of sign in the consecutive terms of the sequence $\Delta_0(\mu), \Delta_1(\mu), \dots, \Delta_p(\mu)$, then $s(\mu)$ is exactly the number of eigenvalues of $T^{(j)} = A_p$ which are strictly above $\mu$. If one of the terms of this sequence is null, say $\Delta_r(\mu) = 0$, we consider that it has the opposite sign of $\Delta_{r-1}(\mu).$
\end{lemme}
\begin{remark}
	If all the $b_{i-1,i}$ are non null, then we can't have two consecutive $\Delta_i(\mu)$ that are null (by decreasing induction we would have all the other $\Delta_i(\mu)=0$, and in particular $\Delta_0(\mu)=0$, which is absurd).
\end{remark}

By Proposition \ref{prop_eig1}, it is sufficient to show that at most two eigenvalues of~$T^{(j)}$ are non-positive. In order to prove that, we extract the first~$(p-1)$ lines and columns of~$T^{(j)}$ and denote by~$T$ the matrix obtained by this process. By strict separation of the eigenvalues of the extracted matrix, it is sufficient to show that at most one of the eigenvalues of~$T$ is non-positive. We use Lemma~\ref{lemme_mineurs} with $\mu = 0$. If we are able to prove that there is at most one change in the signs of the sequence $\Delta_0(0), \Delta_1(0), \dots, \Delta_{p-1}(0)$, we would have the result of positivity of the initial matrix~$S^{(j)}$ (for $j$ odd). We now show:
\begin{lemme}
	We can write, for all $k \leq p-1$: 
	$$\Delta_k(0) = \gamma_{p,k} u_k,$$
	where the $u_k$ are integers satisfying the recurrence relation of order $2$
	\begin{equation}\label{relat_uk}
	u_{k+2} = (2p-5)u_{k+1}-(k+1)(2p-(k+1))u_{k},\; k \geq 0, \qquad u_0=1, \;u_1=2p-5,
	\end{equation}
	and 
	\begin{equation}\label{def_gammapk}
	\gamma_{p,k} =\frac{1}{8^k} 0! 1! \dots (k-1)! \times (2p-1)! \dots (2p-k)!.
	\end{equation}
\end{lemme}
\begin{proof}
	We first recall that 
	\begin{align*}
    	& a_k = \frac{1}{8}(2p-5)(k-1)!(2p-k)!  & 1 \leq k \leq p-1,\\
    	&b_{k,k+1} = -\frac{1}{8}k!(2p-k)!& 1 \leq k \leq p-2.
	\end{align*}
	We have $\Delta_0(0) = 1 = \gamma_{p,0}u_0$ and $ \Delta_1(0) = a_1 = \frac{1}{8}(2p-5)0!(2p-1)! = \gamma_{p,1}u_1.$ We proceed by induction. Suppose that we can write $\Delta_j(0) = \gamma_{p,j} u_j$ for all $j$ up to a certain $k$ such that $p-2 \geq k \geq 1$, with the relations \eqref{relat_uk} and \eqref{def_gammapk}. Then we have, by \eqref{relat_deltak}: 
	\begin{align*}
		\Delta_{k+1}(0) &= a_{k+1} \Delta_k(0) - b_{k,k+1}^2 \Delta_{k-1}(0) \\
		&= \frac{1}{8}(2p-5)k!(2p-1-k)! \gamma_{p,k} u_k  -\frac{1}{8^2}(k!(2p-k)!)^2\gamma_{p,k-1} u_{k-1} \\
		&= \frac{1}{8}k!(2p-1-k)!\pa{\gamma_{p,k} (2p-5)u_k - \frac{1}{8}(2p-k)(2p-k)!k!\gamma_{p,k-1}u_{k-1}} \\
		&= \frac{1}{8}k!(2p-1-k)!\gamma_{p,k}\pa{(2p-5)u_k - k(2p-k)u_{k-1}} = \gamma_{p,k+1}u_{k+1},
	\end{align*}
	where the last two equalities come from $\frac18(j-1)!(2p-j)!\gamma_{p,j-1} = \gamma_{p,j}.$
\end{proof}

\begin{remark}
Note that $\Delta_k(0)$ and $u_k$ have same sign, so it is sufficient to study $u_k$. 
\end{remark}

\begin{lemme}\label{lemme_polynomial}
    Let $(y_n)_{n\in\N}$ be a sequence defined by the values $y_0,y_1\neq0$ and the relation $$y_{n+2} = ay_{n+1} - (n+1)(b-(n+1))y_n,$$
    with $a,b$ some integers, with $b \geq a$.\\
    Suppose we have $y_1=ay_0$. Then, the sequence $(y_n)$ is stationary to $0$ for $n$ big enough, if and only if $a$ and $b$ have opposite parities. More precisely, if we write 
	$$f(x) = \sum_{n=0}^{+\infty} \frac{y_{n}}{n!}x^{n},$$
	we get the expression  $$f(x) = y_0(1+x)^{\frac12(a+b-1)}(1-x)^{\frac12(b-a-1)}.$$
    In particular, $f$ is a polynomial if and only if $a$ and $b$ have opposite parities.
\end{lemme}
\begin{proof}
    We consider the function $$f(x) = \sum_{n=0}^{+\infty} \frac{y_{n}}{n!}x^{n},$$
    which is a power series. Suppose its radius of convergence $R$ is positive. Then, for all $x\in ]-R,R[$ we have: 
    \begin{align*}
        f'(x) &= \sum_{n=1}^{+\infty} \frac{y_{n}}{(n-1)!}x^{n-1} = y_1+ \sum_{n=0}^{+\infty} \frac{y_{n+2}}{(n+1)!}x^{n+1} \\ 
        &= y_1 + \sum_{n=0}^{+\infty} \frac{ ay_{n+1} - (n+1)(b-(n+1))y_n}{(n+1)!}x^{n+1} \\
        &= y_1 + a\sum_{n=0}^{+\infty} \frac{y_{n+1}}{(n+1)!}x^{n+1}-b\sum_{n=0}^{+\infty} \frac{y_{n}}{n!}x^{n+1}+\sum_{n=0}^{+\infty} \frac{(n+1)y_{n}}{n!}x^{n+1}.
    \end{align*}
    We check that:
    \begin{equation*}
        \sum_{n=0}^{+\infty} \frac{y_{n+1}}{(n+1)!}x^{n+1} = f(x) - y_0 \qquad \text{and} \qquad \sum_{n=0}^{+\infty} \frac{y_{n}}{n!}x^{n+1}=xf(x).
    \end{equation*}
    Now we compute: 
    \begin{equation*}
        \sum_{n=0}^{+\infty} \frac{(n+1)y_{n}}{n!}x^{n+1} = xf(x) + \sum_{n=1}^{+\infty} \frac{y_{n}}{(n-1)!}x^{n+1} = xf(x) +x^2\sum_{n=0}^{+\infty} \frac{y_{n+1}}{n!}x^{n} = xf(x)+x^2f'(x).
    \end{equation*}
    So we get 
    $$(1-x^2)f'(x) = \underbrace{y_1-ay_0}_{=0} +\pac{a-(b-1)x}f(x),$$
    which is solved for $$f(x) = \lambda(1+x)^{\frac12(a+b-1)}(1-x)^{\frac12(b-a-1)}, \qquad \lambda\in\R.$$
    With $f(0)=y_0 \neq 0$ we get $f(x) = y_0(1+x)^{\frac12(a+b-1)}(1-x)^{\frac12(b-a-1)},$ which is a polynomial if and only if $a$ and $b$ have opposite parities and satisfy $b > a$. Its degree is then $b-1$. In particular, we deduce that $y_n = 0$ for $n\geq b$. 
\end{proof}

We then use this latter lemma with $a=2p-5$ and $b=2p$. We obtain directly the expression 
$$f(x) = \sum_{n=0}^{+\infty} \frac{u_{n}}{n!}x^{n} = (1+x)^{2p-3}(1-x)^{2}.$$
In particular, $u_n=0$ for all $n$ such that $n \geq 2p$. We develop this last expression to obtain: 
    \begin{align*}
        f(x) &= (1-2x+x^2)\sum_{n=0}^{2p-3}\dbinom{2p-3}{n}x^n\\
        &= 1+(2p-5)x + \sum_{n=2}^{2p-3}\pac{\dbinom{2p-3}{n}-2\dbinom{2p-3}{n-1}+\dbinom{2p-3}{n-2}}x^n + (2p-5)x^{2p-2}+x^{2p-1}.
    \end{align*}
	Then, for $n$ such that $2 \leq n \leq 2p-3,$ we have 
    \begin{align*}
        u_n &= n! \pac{\dbinom{2p-3}{n}-2\dbinom{2p-3}{n-1}+\dbinom{2p-3}{n-2}} \\
        &= n!\pac{\frac{(2p-3)!}{n!(2p-3-n)!}-2\frac{(2p-3)!}{(n-1)!(2p-2-n)!}+\frac{(2p-3)!}{(n-2)!(2p-1-n)!}}  \\
        &= \frac{(2p-3)!}{(2p-1-n)!}\pac{(2p-2-n)(2p-1-n)-2n(2p-1-n)+n(n-1)} \\
        &= \frac{(2p-3)!}{(2p-1-n)!} \pa{4n^2-4(2p-1)n+(2p-1)(2p-2)}= 4\frac{(2p-3)!}{(2p-1-n)!}\pa{n-p_+}\pa{n-p_-},
    \end{align*}
    where $$p_+ = \frac{1}{2}\pa{2p-1+\sqrt{2p-1}}, \qquad \text{and} \qquad p_- = \frac{1}{2}\pa{2p-1-\sqrt{2p-1}}.$$
\begin{remark}
	We emphasise here that we have, for all $p \geq 2$ (which correspond to $j \geq3$), $p_- < p < p_+$.
\end{remark}

	{\underline    {Step 4 : Conclusion.}}
We recall that we have already dealt with the cases $j \in \{1,3,5\}$ in step~$1$, corresponding to $p\in \{1,2,3\}$. We proved in the last step that, for $p \geq 4$:
\begin{equation*}
    \left\{
        \begin{array}{ll}
        u_n &> 0 \qquad 0 \leq n < p_-,\\
        u_n &< 0 \qquad p_- < n \leq p.
        \end{array}
    \right.
\end{equation*}
Whether or not $p_-$ is an integer has no impact here. We deduce that at most one of the eigenvalues of the extracted matrix $T$ is non-positive by Lemma \ref{lemme_mineurs}. So, at most two eigenvalues of $T^{(j)}$ are non-positive, that is to say $\lambda_3 > 0$. By Proposition \ref{prop_eig1}, we obtain that $S^{(j)}$ is non-negative, which completes the proof of Lemma \ref{lemme_cas_impair}.
\end{proof}


We now turn to the case $j$ even.

\begin{lemme}\label{lemme_cas_pair}
	For all $j \geq 0$ even, we have $B^{(j)}_{| \mathcal{F}^{(j)}}$ non-negative.
\end{lemme}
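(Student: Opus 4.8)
The plan is to mirror the structure of the proof of Lemma~\ref{lemme_cas_impair}, but now the matrix $B^{(j)}\in\mathcal{M}_{j+1}(\R)$ has \emph{odd} size $j+1$, so the relevant reduction is the odd-order Cantoni--Butler decomposition of Lemma~\ref{lemme_CB_impair} rather than the even one. Writing $m=j/2$ and $p=m+1=\frac{j}{2}+1$, that lemma tells us that the eigenvalues of $B^{(j)}_{|\mathcal{F}^{(j)}}$ coincide with the eigenvalues of the $p\times p$ bordered matrix
$$ Y^{(j)} = \begin{pmatrix} A+JC & \sqrt 2\, x \\ \sqrt 2\, {}^tx & q \end{pmatrix}, $$
where $A,C,x,q$ are read off from the block form of $B^{(j)}$. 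First I would dispatch the small cases $j\in\{0,2,4\}$ by hand; in particular one checks directly that $B^{(2)}_{|\mathcal{F}^{(2)}}=0$ (so $Y^{(0)}=Y^{(2)}=0$), leaving only the large even $j$ to treat by the general argument.

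The key structural observation, exactly as in the odd case, is that $B^{(j)}$ splits as a tridiagonal symmetric matrix plus the rank-one ``all ones'' matrix $\frac{j!}{2^j}\,\mathrm{Ones}(j+1,j+1)$ coming from the Hamiltonian term \eqref{eq1}. Folding through Lemma~\ref{lemme_CB_impair} is linear and preserves this splitting: I would show
$$ Y^{(j)} = T^{(j)} + K^{(j)}, \qquad K^{(j)} = \frac{j!}{2^j}\, w\,{}^tw, \qquad w=(\sqrt2,\dots,\sqrt2,1)^t\in\R^{p}, $$
so that $K^{(j)}$ is a positive semi-definite rank-one matrix (the $\sqrt2$'s on the border together with the central entry $q$ conspire precisely into this outer product), while $T^{(j)}$ is tridiagonal, its last row and column carrying the $\sqrt2$ factor and its corner diagonal modified by the central coupling. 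Next I would exhibit the two explicit symmetric kernel vectors $v^{(j)}_k\propto\frac{1}{k!(j-k)!}$ and $w^{(j)}_k\propto\frac{k(j-k)}{k!(j-k)!}$ — the folded coherent-state direction $R_\alpha\phi_0$ and its companion — which, by the invariance $B(R_\alpha\phi_0)=B(\phi_0)=0$ of Lemma~\ref{lemme_invariance_B}, force $0$ to be an eigenvalue of $Y^{(j)}$ of multiplicity at least $2$.

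From here the argument is identical in spirit to Lemma~\ref{lemma_eig2} and Proposition~\ref{prop_eig1}: since $K^{(j)}\geq0$ has rank one, the eigenvalues of $Y^{(j)}$ interlace those of $T^{(j)}$ from above, so it suffices to prove that $T^{(j)}$ has at most two non-positive eigenvalues, i.e. $\lambda_3(T^{(j)})>0$; combined with the forced double kernel this yields $Y^{(j)}\geq0$. To bound the non-positive eigenvalues I would extract the leading $(p-1)\times(p-1)$ principal submatrix — which discards precisely the $\sqrt2$-scaled last row and column — and run the Sturm-sequence analysis of Lemma~\ref{lemme_mineurs} at $\mu=0$: compute the principal minors $\Delta_k(0)=\gamma_{p,k}u_k$, derive a second-order recurrence for $(u_k)$ of the form covered by Lemma~\ref{lemme_polynomial}, solve it through the generating function $f(x)=\sum_k \frac{u_k}{k!}x^k=y_0(1+x)^{\frac12(a+b-1)}(1-x)^{\frac12(b-a-1)}$ for the appropriate integers $a,b$, and read off the sign pattern of $(u_k)$. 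At most one sign change in the extracted submatrix forces, by Cauchy interlacing, at most two non-positive eigenvalues of $T^{(j)}$, hence $\lambda_3(T^{(j)})>0$, and the analogue of Proposition~\ref{prop_eig1} closes the proof.

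The main obstacle I anticipate is the exact book-keeping of the folded entries: pinning down the modified corner diagonal $q$ and the $\sqrt2$-border so that the rank-one piece is \emph{exactly} $\frac{j!}{2^j}\,w\,{}^tw$, and — more delicately — verifying that the recurrence produced by the even-$j$ tridiagonal part still possesses the rigid shape required by Lemma~\ref{lemme_polynomial}, with $a$ and $b$ of opposite parity, since this is exactly what guarantees that $f$ is a genuine polynomial and therefore that $(u_k)$ exhibits the clean single-sign-change behaviour. Once those two computations are in place, everything else is a faithful transcription of the odd case.
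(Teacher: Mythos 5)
Your proposal is correct and follows essentially the same route as the paper's proof: odd-order Cantoni--Butler folding to a $(\frac{j}{2}+1)$-square matrix, the explicit two-dimensional kernel, a tridiagonal-plus-rank-one splitting, and the Sturm-sequence/generating-function analysis with $a=2q-4$, $b=2q+1$ (opposite parities, as required). The only differences are organizational — you decompose the full folded matrix and discard the $\sqrt2$-bordered last row and column afterwards, whereas the paper deletes that row and column first and works with $R^{(j)}=A+JC$ — together with a factor-of-$2$ normalization in the rank-one piece (the correct coefficient is $\tfrac{j!}{2^{j+1}}$ for $w=(\sqrt2,\dots,\sqrt2,1)$, so that its top-left $q\times q$ block is $\tfrac{j!}{2^{j}}\mathrm{Ones}(q,q)$), a bookkeeping point you already flag and which does not affect the structure of the argument.
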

\begin{proof}
	We fix an even integer $j$. We proceed as in the case $j$ odd above, but details are left to the reader.

{\underline    {Step 1 : Reduction to a symmetric matrix of lower order.}} Denote by $J= \text{antidiag}(1,\frac{j}{2})$. We follow again \cite{Cantoni-Butler} and Sect.~\ref{sec_centro}. The matrix $B^{(j)} \in \mathcal{M}_{j+1}(\R)$ can be written 
\begin{equation*}
    B^{(j)}  =   \begin{pmatrix}
    A & x & JCJ  \\[2pt]
    ^t x & q & ^t x J  \\[2pt]
    C &  Jx & JAJ                
\end{pmatrix}.
\end{equation*}
By Lemma \ref{lemme_CB_impair}, the eigenvalues of    $B^{(j)}_{| \mathcal{F}^{(j)}}$ are the eigenvalues of $S^{(j)}:= \begin{pmatrix} A+JC & \sqrt{2}x \\[2pt] \sqrt{2}\, ^tx & q \end{pmatrix}.$
A quick computation gives 
$$B^{(0)} = 0, \qquad S^{(0)} = 0,$$
\begin{equation*}
    B^{(2)}  = \frac14 \begin{pmatrix}
    -1 & 0 & 1\\[2pt]
    0 & 0& 0\\[2pt]    
    1 & 0&-1                
    \end{pmatrix}, \qquad S^{(2)}=0,
\end{equation*}
and
\begin{equation*}
    B^{(4)}  = \frac34 \begin{pmatrix}
    1 & -3 & 1& 1& 1\\[2pt]
    -3 & 1 & -1& 1& 1\\[2pt]
    1 & -1 & 1& -1& 1\\[2pt]  
    1 & 1 & -1& 1& -3\\[2pt]
    1 & 1 & 1& -3& 1
    \end{pmatrix},\qquad S^{(4)}=  \frac34 \begin{pmatrix}
    2 & -2 & \sqrt{2} \\[2pt]
    -2 & 2 & -\sqrt{2} \\[2pt]
    \sqrt{2} & -\sqrt{2} & 1
    \end{pmatrix} \geq 0.
\end{equation*}
Then, we can focus on the case $j \geq 6$ even.

For all $j\geq6$ (even), we can check that $0$ is an eigenvalue of $S^{(j)}$ of multiplicity at least $2$, and the eigenvectors associated are given by $v^{(j)}$ and $w^{(j)}$, where:
\begin{equation}\label{vecteurs_propres_B_pair1}
v_k^{(j)} = \frac{1}{(k-1)!(j-k+1)!}, \quad 1\leq k \leq q:=\frac{j}{2}, \qquad v_{q+1}^{(j)} = \frac{1}{\sqrt{2}q!^2};
\end{equation}
\begin{equation}\label{vecteurs_propres_B_pair2}
w_k^{(j)} = \frac{(k-1)(j-k+1)}{(k-1)!(j-k+1)!}, \quad 1\leq k \leq q, \qquad w_{q+1}^{(j)} = \frac{1}{\sqrt{2}(q-1)!^2}.
\end{equation}

We recall that, if we write $B^{(j)} = (B_{k,\ell}^{(j)})_{1\leq k,\ell\leq j+1}$, we have 
$$B_{k,k}^{(j)} = \frac{j!}{2^{j+1}} + \frac{1}{8}(j-4)(k-1)!(j-k+1)!, \qquad 1\leq k \leq j+1,$$
$$B_{k,k+1}^{(j)} = B_{k+1,k}^{(j)} = \frac{j!}{2^{j+1}} - \frac{1}{8}k!(j-k+1)!, \qquad 1\leq k \leq j,$$
and $B_{k,\ell}^{(j)} =\dfrac{j!}{2^{j+1}}$ otherwise.

We define $q \in \N$ by $j= 2q$, so that $S^{(j)} \in \mathcal{M}_{q+1}(\R).$ We are interested in the eigenvalues of $S^{(j)}$. The matrix $R^{(j)} = A+JC \in \mathcal{M}_q(\R)$ is the extraction of $S^{(j)}$ by erasing the last line and column, so the eigenvalues of $R^{(j)}$ intertwine those of $S^{(j)}$. We know that $0$ is an eigenvalue with multiplicity at least $2$ of $S^{(j)}$, so it suffices to show that $R^{(j)}$ is non-negative to obtain $S^{(j)}$ non-negative. Furthermore, we know that $0$ lies in the spectrum of $R^{(j)}$ since $0$ is an eigenvalue of $S^{(j)}$ with multiplicity at least $2$. We notice that $R^{(j)}$ can be written as the sum of a symmetric tridiagonal matrix and a matrix of rank unity. We write again $T^{(j)}$ this tridiagonal matrix, whose coefficients are given by :
\begin{align*}
    &T^{(j)}_{k,k} = \frac{1}{8}(2q-4)(k-1)!(2q+1-k)!  & 1 \leq k \leq q,\\
    &T^{(j)}_{k,k+1} = T_{k+1,k} = -\frac{1}{8}k!(2q+1-k)!& 1 \leq k \leq q-1.
\end{align*}


	{\underline    {Step 2 : Reduction to the study of a tridiagonal matrix.}}
The method in this step is identical as in the case $j$ odd. Since we only know that $0$ is an eigenvalue of $R^{(j)}$ with multiplicity (at least) $1$, the analogous of Proposition \ref{prop_eig1} is :

\begin{prop}\label{prop_eig2}
    Denote $\lambda_1 \leq \lambda_2 \leq \dots \leq \lambda_q$ the eigenvalues of $T^{(j)}.$ If $\lambda_2 > 0$, then $R^{(j)} \geq 0$.
\end{prop}

\medskip

	{\underline    {Step 3 : Study of a Sturm sequence.}} 
We study again the principal minors of the matrix $T^{(j)}-\lambda I_q$. This leads to the sequence
\begin{equation}\label{relat_vk}
	v_{k+2} = (2q-4)v_{k+1}-(k+1)(2q+1-(k+1))v_{k},\; k \geq 0, \qquad v_0=1, \;v_1=2q-4.
\end{equation}	

We then use Lemma~\ref{lemme_polynomial} with $a=2q-4$ and $b=2q+1$. We obtain directly the expression 
$$f(x) = \sum_{n=0}^{+\infty} \frac{v_{n}}{n!}x^{n} = (1+x)^{2q-2}(1-x)^{2}.$$
In particular, $v_n=0$ for all $n$ such that $n \geq 2q+1$. After computations, we get for every $n$ such that $2 \leq n \leq 2q-2$:
$$ v_n = 4\frac{(2q-2)!}{(2q-n)!}\pa{n-q_+}\pa{n-q_-},$$
where 
$$q_+ = \frac{1}{2}\pa{2q+\sqrt{2q}}, \qquad \text{and} \qquad q_- = \frac{1}{2}\pa{2q-\sqrt{2q}}.$$

	{\underline    {Step 4 : Conclusion.}}
We then have:
\begin{equation*}
    \left\{
        \begin{array}{ll}
        v_n &> 0 \qquad 0 \leq n < q_-,\\
        v_n &< 0 \qquad q_- < n \leq q.
        \end{array}
    \right.
\end{equation*}
Whether or not $q_-$ is an integer has no impact here. We deduce that at most one of the eigenvalues of the tridiagonal matrix $T^{(j)}$ is non-positive by Lemma~\ref{lemme_mineurs}, that is to say~$\lambda_2 >0$. By Proposition~\ref{prop_eig2}, we obtain that $R^{(j)}$ is non-negative, which completes the proof of Lemma~\ref{lemme_cas_pair}.
\end{proof}

\subsection{Conclusion}
In the last subsection, we proved that the functional~$B$ is non-negative (Proposition~\ref{Prop_B0}). Then, by Proposition~\ref{prop_Fmu}, Theorem~\ref{thm1}~$(ii)$ holds true.


\section{Proof of Theorem \ref{case_mu_12}}\label{section_cas_1_2}
In this section, we treat entirely the case $\mu = \frac12$. We give the complete description of global minimizers of $\mathcal{G}_\mu$. We start by giving some general result for any minimizer of $\mathcal{G}_\mu$.

\begin{prop}
    Assume that $u$ is a local minimizer of $\mathcal{G}_\mu$, for some $\mu > 0$. Then we have $Q(u) = 0$.
\end{prop}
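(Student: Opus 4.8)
The plan is to exploit the magnetic translation symmetry $R_\alpha$ to reduce to the case $Q(u)=0$, and then argue that if $Q(u)\neq 0$ one could strictly lower the energy, contradicting local minimality. The starting observation is that $\mathcal{G}_\mu(u)=8\pi\mathcal{H}(u)+\mu P(u)$ and that, among our conserved quantities, $\mathcal{H}$ and $M$ are invariant under $R_\alpha$ (so the mass constraint $M(u)=1$ is preserved along the orbit $\alpha\mapsto R_\alpha u$), whereas $P$ transforms according to \eqref{action_trans_P} and $Q$ according to \eqref{action_trans_Q}.

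First I would compute $\mathcal{G}_\mu(R_\alpha u)$ explicitly as a function of $\alpha\in\C$. Since $\mathcal{H}(R_\alpha u)=\mathcal{H}(u)$ and $M(R_\alpha u)=M(u)$, only the $P$-term varies, and \eqref{action_trans_P} gives
\begin{equation*}
\mathcal{G}_\mu(R_\alpha u)=\mathcal{G}_\mu(u)+\mu\pa{-2\Re(\overline{\alpha}Q(u))+|\alpha|^2 M(u)}.
\end{equation*}
Because $u$ is a local minimizer with $M(u)=1$, and because $\alpha\mapsto R_\alpha u$ is a smooth curve through $u$ (at $\alpha=0$) staying on the constraint manifold $\{M=1\}$, the function $g(\alpha):=\mathcal{G}_\mu(R_\alpha u)$ must have a local minimum at $\alpha=0$.

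Next I would read off the conclusion from the shape of $g$. With $M(u)=1$ the right-hand side is exactly $\mathcal{G}_\mu(u)+\mu\pa{|\alpha|^2-2\Re(\overline{\alpha}Q(u))}=\mathcal{G}_\mu(u)+\mu\abs{\alpha-Q(u)}^2-\mu\abs{Q(u)}^2$, a genuine (strictly convex, since $\mu>0$) paraboloid in $\alpha$ whose unique global minimum is attained at $\alpha=Q(u)$. For $\alpha=0$ to be even a local minimizer of this single smooth function we must have $\nabla g(0)=0$, which forces $Q(u)=0$; otherwise moving a small distance toward $\alpha=Q(u)$ strictly decreases $\mathcal{G}_\mu$ while preserving the mass, contradicting local minimality.

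The only genuine subtlety, and the step I would treat most carefully, is the regularity and admissibility of the variation: one must check that $R_\alpha u$ stays in the relevant space $L^{2,1}_{\mathcal{E}}$ and that $\alpha\mapsto R_\alpha u$ is continuous (indeed smooth) in whatever topology the notion of ``local minimizer'' is taken with respect to, so that nearness of $R_\alpha u$ to $u$ for small $\alpha$ legitimately places it inside the neighborhood on which $u$ is minimal. Granting this — which follows from the explicit formula for $R_\alpha$ and standard continuity of magnetic translations on the Bargmann--Fock space — the computation above is exact rather than perturbative, so no higher-order expansion is needed and the first-order vanishing condition $\nabla g(0)=0$ immediately yields $Q(u)=0$.
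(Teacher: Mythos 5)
Your proposal is correct and follows essentially the same route as the paper: both use the transformation law \eqref{action_trans_P} for $P$ under the magnetic translation $R_\alpha$ (with $\mathcal{H}$ and $M$ invariant) and observe that the resulting quadratic function of $\alpha$ decreases strictly when moving from $\alpha=0$ toward $\alpha=Q(u)/M(u)$ unless $Q(u)=0$; the paper simply instantiates this with the explicit choice $\alpha=\eps Q(u)/M(u)$ rather than completing the square. Your extra remark on the continuity of $\alpha\mapsto R_\alpha u$ is a fair point of care but does not change the substance of the argument.
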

\begin{proof}
    Let $u$ be a local minimizer for $\mathcal{G}_\mu = 8\pi \mathcal{H} + \mu P$ with $M(u)=1$. We consider $v_\alpha = R_\alpha u$, and compare $\mathcal{G}_\mu(u)$ and $\mathcal{G}_\mu(v_\alpha)$. We have $\mathcal{H}(v_\alpha) = \mathcal{H}(u).$ Furthermore, \eqref{action_trans_P} gives 
    $$ P(v_\alpha) = P(u) -2 \Re(\ov{\alpha} Q(u)) + |\alpha|^2 M(u).$$
    Since $u$ is a local minimizer, we have $P(v_\alpha) \geq P(u)$ for all $\alpha\in\C$ close enough to~$0$. Then, for the choice of $\dis \alpha = \frac{Q(u)}{M(u)}\eps,$ with $\eps >0$, we obtain 
    $$ P(v_\alpha) - P(u) = (\eps^2 - 2\eps) \frac{|Q(u)|^2}{M(u)}.$$
    For $\eps >0$ small enough, the quantity~$\eps^2-2\eps < 0,$ so that we obtain~$Q(u)=0$.
\end{proof}
\begin{remark}
    The magnetic momentum $Q(u)$ represents the distance of the function $u$ to the origin. Then, the latter proposition states that, in order to minimise $\mathcal{G}_\mu$, one should put the mass to be centred at the origin.
\end{remark}

We recall the expression 
$$ F_\mu(u) = 2E(u) + (\mu - \frac12)P(u)M(u),$$
and we recall that $F_\mu \geq 0$ means that $\phi_0$ is a global minimizer for~\eqref{min_pb}. We proved in Proposition~\ref{prop_BE}, Proposition~\ref{prop_Fmu}  and Proposition~\ref{Prop_B0} that $E \geq 0$, meaning $\phi_0$ is the unique global minimizer for any $\mu > \frac12$, and a global minimizer for $\mu = \frac12.$ Then, all other global minimizers for $\mu = \frac12$ are given by the equation $E(u) = 0$, of unknown $u \in L^{2,1}_\E$. In the remaining of the section, we prove:
\begin{theorem}\label{case_mu_12_bis}
	Suppose $\mu = \frac12$. Then, the global minimizers of \eqref{min_pb} are, up to phase and space rotations, $ \phi_0, \phi_1, \psi_b$.
\end{theorem}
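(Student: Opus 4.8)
The plan is to reduce everything to the already-established facts. By Propositions~\ref{prop_BE}, \ref{prop_Fmu} and~\ref{Prop_B0} we know that $E\geq 0$ and that, for $\mu=\frac12$, a function $u$ with $M(u)=1$ is a global minimizer of~\eqref{min_pb} exactly when $F_{\frac12}(u)=2E(u)=0$. So the theorem reduces to \emph{solving the equation $E(u)=0$}. Since $E(u)=B(u)+\frac14|Q(u)|^2$ with $B\geq 0$, the condition $E(u)=0$ splits into the two conditions $B(u)=0$ and $Q(u)=0$. I would therefore first describe $\{u : B(u)=0\}$, exploiting homogeneity to ignore the normalisation $M(u)=1$ for the moment, and only impose $Q(u)=0$ together with the reduction modulo phase and space rotations at the very end.

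To describe $\{B(u)=0\}$ I use the decomposition $B(u)=\sum_{j}\sum_{k,\ell}B^{(j)}_{k\ell}\overline{A_{jk}}A_{j\ell}$ from~\eqref{exp_B}. The vectors $(A_{jk})_k$ are symmetric, and the proof of Proposition~\ref{Prop_B0} shows that each $B^{(j)}_{|\mathcal{F}^{(j)}}$ is non-negative, with kernel of dimension exactly two for $j\geq 3$ (equal to all of $\mathcal{F}^{(j)}$ for $j\leq 2$), and in every case spanned by the explicit eigenvectors $v^{(j)},w^{(j)}$ of~\eqref{vecteurs_propres_B_impair}, \eqref{vecteurs_propres_B_pair1}--\eqref{vecteurs_propres_B_pair2}. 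Hence $B(u)=0$ if and only if, for every $j$, the vector $(A_{jk})_k$ lies in $\mathrm{span}_{\C}\{v^{(j)},w^{(j)}\}$.

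Next I translate this into a statement about the holomorphic factor. Writing $u=\frac{1}{\sqrt\pi}e^{-|z|^2/2}g(z)$ with $g(z)=\sum_k c_k z^k$ and $c_k=a_k/\sqrt{k!}$, one has $A_{jk}=c_k c_{j-k}$; setting $e_k=k!\,c_k$, membership of $(A_{jk})_k$ in $\mathrm{span}\{v^{(j)},w^{(j)}\}$ becomes $e_k e_{j-k}=\alpha_j+\beta_j\,k(j-k)$ for all $0\leq k\leq j$, with constants $\alpha_j,\beta_j\in\C$. Summing these identities against $s^k t^{j-k}/(k!(j-k)!)$ collapses them into the single functional equation $g(s)g(t)=\Phi(s+t)+st\,\Psi(s+t)$ for suitable entire $\Phi,\Psi$. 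This is the heart of the argument. Eliminating $\Phi$ and $\Psi$ by differentiation (both depend only on $s+t$) yields the third-order identity $g'''(s)g(t)-3g''(s)g'(t)+3g'(s)g''(t)-g(s)g'''(t)=0$; fixing $t_0$ with $g(t_0)\neq 0$ shows that $g$ solves a third-order constant-coefficient linear ODE, so $g$ is a combination of terms $z^m e^{\lambda z}$. Passing to coordinates $\sigma=s+t,\ \tau=s-t$, the functional equation says that $g(\tfrac{\sigma+\tau}{2})\,g(\tfrac{\sigma-\tau}{2})$ is, for each fixed $\sigma$, a polynomial of degree $\leq 2$ in $\tau$: two distinct exponential rates would produce a genuine $e^{c\tau}$ with $c\neq 0$ that cannot cancel, and a degree-$2$ polynomial factor would produce a $\tau^4$ term; both are impossible. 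This forces $g(z)=(a+bz)e^{\lambda z}$, that is $u(z)=\frac{1}{\sqrt\pi}(a+bz)e^{\lambda z-|z|^2/2}$.

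Finally I would close by imposing $Q(u)=0$ and reducing modulo symmetries. The family $\{\frac{1}{\sqrt\pi}(a+bz)e^{\lambda z-|z|^2/2}\}$ is a union of symmetry orbits, since $B$ is invariant under all three groups by Lemma~\ref{lemme_invariance_B}; as $Q(R_\alpha u)=Q(u)-\alpha M(u)$, each magnetic-translation orbit contains a unique representative with $Q=0$, and a direct computation identifies these $Q=0$ representatives, up to phase and space rotation and after normalising $M=1$, as precisely $\phi_0$ (the case $b=0$, $\lambda=0$), $\phi_1$, and the family $\psi_b$ for $b\geq 0$ (consistent with the endpoint behaviour $\psi_b\to\phi_1$ and $\psi_b\to-\phi_0$). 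The main obstacle is the rigidity step of the third paragraph: proving that $g(s)g(t)=\Phi(s+t)+st\,\Psi(s+t)$ admits only a degree-$\leq 1$ polynomial times a single exponential. Everything else is either recalled from Section~\ref{section_proof} or a routine identification of the explicit family.
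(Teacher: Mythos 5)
Your proposal is correct and shares the paper's overall skeleton: reduce to $F_{\frac12}(u)=2E(u)=0$, split $E(u)=B(u)+\frac14|Q(u)|^2$ into $B(u)=0$ and $Q(u)=0$ using Proposition~\ref{Prop_B0}, exploit that the kernel of each $B^{(j)}_{|\mathcal{F}^{(j)}}$ is spanned by $v^{(j)},w^{(j)}$, and finally normalise by the symmetries. Where you genuinely diverge is the rigidity step, i.e.\ the paper's Lemma~\ref{cas_egalite_B}. The paper first treats the zero-free case separately, then translates a zero of $u$ to the origin and runs a three-case discrete analysis on the coefficients $b_k=\sqrt{k!}\,a_k$ satisfying $b_kb_{j-k}=\alpha_j+k(j-k)\beta_j$, deducing by hand that the only solutions are $(a_0\phi_0+a_1\phi_1)e^{cz}$. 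You instead resum that same coefficient identity into the functional equation $g(s)g(t)=\Phi(s+t)+st\,\Psi(s+t)$ for the holomorphic factor, kill $\Phi,\Psi$ with $(\partial_s-\partial_t)^3$ to get a third-order constant-coefficient ODE for $g$, and then rule out two exponential rates or a quadratic polynomial factor by degree counting in $\tau=s-t$. This is a legitimate and arguably more conceptual route: it avoids the case split on zeros and explains \emph{why} the answer is a degree-one polynomial times an exponential. The price is that you must justify the convergence of the generating series (harmless here, since $\alpha_j=e_0e_j$ and $\beta_j$ is recovered from the $k=1$ entry, both controlled by the entire function $g$) and carry out the exponential-polynomial cancellation argument carefully (pick the extremal difference $\lambda_i-\lambda_{i'}$ to see it cannot cancel). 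Your last step is stated a bit loosely: identifying the $Q=0$, $M=1$ representatives of the family $(a+bz)e^{\lambda z-|z|^2/2}$ as exactly $\phi_0,\phi_1,\psi_b$ is a finite but nontrivial computation, which the paper short-circuits by citing the classification \cite[Proposition~6.2]{GGT}; you should either do that computation explicitly or invoke the same reference. With that caveat, the argument is sound.
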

\begin{proof}
    As said before, it suffices to study the equation $E(u) = 0$, of unknown $u\in L^{2,1}_\E$ with the constraint $M(u)=1$. For $u\in  L^{2,1}_\E$ such that $E(u)=0$, we have $B(u)=0$ (see Proposition~\ref{prop_mu_12} below). Let us begin by studying the equation $B(u)=0$:

\begin{lemme}\label{cas_egalite_B}
    Assume that $B(u)=0$, then there exists $a_0,a_1,c \in \C$ such that $$u(z)=(a_0 \phi_0(z)+a_1 \phi_1(z))e^{cz}.$$
\end{lemme}
\begin{proof}
    Let $u \in L^{2,1}_\E$ be a solution of $B(u)=0$. If $u$ has no zero, then we can write $u = C_1 \phi_0^c$, which gives $u(z)=a_0 \phi_0(z) e^{c z}$. Else, $u$ has at least one zero, say $z_0 \in \C$. We define $U(z) = R_{z_0}u(z) = u(z+z_0)e^{\frac12(\ov{z}z_0-z\ov{z_0})},$ which is $0$ for $z=0$. Then we can write $$U = \sum_{n=0}^{+\infty} a_n\phi_n,$$
    with $a_0=0$.
    Furthermore, note that we have $B(U) = B(u) = 0$ by Lemma~\ref{lemme_invariance_B}. As in Sect.~\ref{section_proof}, we set $A_{jk} = \frac{a_ka_{j-k}}{\sqrt{k!(j-k)!}}$ for $0 \leq k \leq j$. By Eq.~\eqref{exp_B}, and both Lemmas~\ref{lemme_cas_impair} and~\ref{lemme_cas_pair}, the assertion $B(U) = 0$ gives that, $\forall j \geq 0$, the vector $A_j = (A_{jk})_{0 \leq k \leq j}$ is an eigenvector of~$B^{(j)}$, associated to the eigenvalue~$0$. We recall that all other eigenvalues of~$B$ are positive. It gives (see Eqs.~\eqref{vecteurs_propres_B_impair}, \eqref{vecteurs_propres_B_pair1} and \eqref{vecteurs_propres_B_impair} above) that~$A_1$ proportional to~$v^{(1)}$, and for $j \geq 2$:
    $$ A_j = \alpha_j v^{(j)} + \beta_j w^{(j)}, \qquad \alpha_j,\beta_j \in \C,$$
    where $v^{(j)} = (v^{(j)}_k)_{0 \leq k \leq j}$, $w^{(j)} = (w^{(j)}_k)_{0 \leq k \leq j}$, and
    \begin{align*}
        v^{(j)}_k &= \frac{1}{k!(j-k)!}, \qquad 0 \leq k \leq j,\\
        w^{(j)}_k &= \frac{k(j-k)}{k!(j-k)!}, \qquad 0 \leq k \leq j.
    \end{align*}
    We set $b_k := \sqrt{k!}a_k, k\in\N$, to get the system of equations 
    \begin{equation}\label{systeme_bk}
        \left\{
            \begin{array}{ll}
            b_0b_1 &= \alpha_1, \\
            b_kb_{j-k} &= \alpha_j+k(j-k)\beta_j, \qquad  j \geq 2, \;\; 0 \leq k \leq j.
            \end{array}
        \right.
    \end{equation}
    From $a_0=0$, we have $b_0=0$, and $\alpha_1=0$. Then, for all $j \geq 2$, the choice of $k=0$ gives $b_jb_0 = 0 = \alpha_j.$ \newline
    \textbf{Case~$1$: $b_1 = 0$.} \\
    Suppose $b_1=0$. For $j \geq 2$, we use \eqref{systeme_bk} for $k=1$: $0 = b_{j-1}b_1 = (j-1)\beta_j,$ so $\beta_j=0$ for all $j\geq 2$, and from $b_j^2 = j^2\beta_{2j} = 0$ we obtain $b_j = 0$, that is to say $v=0$, so $u=0$. \newline 
    \textbf{Case~$2$: there exists $k_0\geq 2$, such that $b_{k_0} = 0$.} \\
    Suppose $b_{k_0}=0$ for some $k_0\geq 2$, and $b_\ell \neq 0$ if $ 1 \leq \ell \leq k_0-1.$ Then, for all $\ell \geq 1$, we have $0 = b_\ell b_{k_0} = k_0\ell\beta_{\ell+k_0}$, which means $\beta_j = 0$ for $j \geq k_0+1.$ If $\ell \geq k_0+1$, we obtain $b_\ell b_1 = \ell \beta_{\ell+1} = 0$, so that $b_\ell = 0$, since $b_1 \neq 0$. By hypothesis, $b_{k_0-1}^2 \neq 0$, and by \eqref{systeme_bk}, we have $b_{k_0-1}^2=(k_0-1)^2 \beta_{2(k_0-1)}$. But for $2(k_0-1) \geq k_0+1$, we have $\beta_{2(k_0-1)} = 0$, so that $k_0$ cannot satisfy $k_0 \geq 3$. Then $k_0 = 2$, which means that $v$ is proportional to $\phi_1$, and $u$ takes the form $u(z) = a_1\phi_1(z) e^{cz}$. \newline
    \textbf{Case~$3$: for all $j \geq 1, b_j \neq 0$.} \\
    Suppose that, for all $j \geq 1,$ we have $b_j \neq 0$. We then write, for all $j \geq 2,$ for all $k$ such that $ 1 \leq k \leq j-1$, 
    $$\beta_j = \frac{b_k}{k} \frac{b_{j-k}}{j-k},$$
    and $c_k := \frac{b_k}{k},$ to get
    \begin{equation}\label{eq_beta_j}
        \beta_j = c_kc_{j-k}, \qquad 1 \leq k \leq j-1.
    \end{equation}
    Since for all $j \geq 1, b_j \neq 0$, we have $\beta_j \neq 0,$ for $j \geq 1$. We use Eq.~\eqref{eq_beta_j} for both $k=1$ and $k=2$ to get 
    $$ \frac{c_{j-1}}{c_{j-2}} = \frac{c_2}{c_1}.$$ 
    Taking the product of this over $j$, we get 
    $$c_\ell = \lambda \alpha^\ell,$$
    for some $\alpha,\lambda \in \C.$ Then, 
    $$ a_\ell = \ell\frac{\lambda \alpha^\ell}{\sqrt{\ell!}},$$
    and we compute 
    \begin{equation*}
        U(z) = \sum_{k=1}^{+\infty} a_k \phi_k(z) = \lambda \sum_{k=1}^{+\infty} \frac{k \alpha^k}{\sqrt{k!}} \frac{z^k}{\sqrt{\pi k!}}e^{-\frac{|z|^2}{2}} = \frac{\lambda}{\sqrt{\pi}}\alpha z \pa{\sum_{k=1}^{+\infty} \frac{(\alpha z)^{k-1}}{(k-1)!} } e^{-\frac{|z|^2}{2}} = \lambda \alpha \phi_1(z) e^{\alpha z}.
    \end{equation*}
    Finally, by action of magnetic translation, $u$ takes the form $u(z) = (a_0\phi_0(z) + a_1\phi_1(z))e^{cz},$ and this completes the proof.
\end{proof}

\begin{prop}\label{prop_mu_12}
	Assume that $E(u)=0$ and $M(u) = 1$. Then, $B(u)=0$ and $Q(u) = 0$. It gives, modulo space and phase rotations, $u \in \{ \phi_0, \phi_1, \psi_b\}$.
\end{prop}
\begin{proof}
    By Proposition \ref{Prop_B0}, we have $$ 0 \leq B(u) + \frac{1}{4}|Q(u)|^2 = E(u) = 0,$$
    so that both $B(u)$ and $Q(u)$ vanish. By Proposition \ref{cas_egalite_B}, we can write $u(z)=(a_0 \phi_0(z)+a_1 \phi_1(z))e^{cz}$.\\
    If $a_1 = 0$, it reads $u= a_0 e^{\frac{|c|^2}{2}}\phi_0^c$, and $Q(u) = |a_0|^2e^{|c|^2} \ov{c}.$ Since $u \neq 0$ and $Q(u)=0$, we obtain $c= 0$, meaning that $u$ is proportional to $\phi_0$. \\
    Else, $a_1 \neq 0$, and by \cite[Proposition 6.2]{GGT}, there exist $(C,\phi,a,b) \in \C\backslash\{0\} \times \T \times \C \times \R$, such that 
    $$ u= CL_\phi R_a \psi_b.$$
    We compute, thanks to \eqref{action_trans_Q} and \eqref{action_rot_Q},
    $$ 0 = Q(u) = |C|^2Q(L_\phi R_a \psi_b) =  |C|^2e^{-i\phi}Q(R_a \psi_b) = |C|^2e^{-i\phi}\pa{Q(\psi_b) - aM(\psi_b)}.$$
    We recall that $Q(\psi_b) = 0$ and $M(\psi_b)=1$, giving $Q(u) = 0$ if and only if $a=0$. We also recall that for $b=0$, $\psi_b = \phi_1$. This completes the proof of Proposition~\ref{prop_mu_12}, and then the proof of Theorem~\ref{case_mu_12_bis}.
\end{proof}
\end{proof}


\section{Proof of Theorem \ref{min_phi1}}\label{section_cas_phi1}
In this section, we study the case $\mu \in [5/32,1/2)$. We prove the existence of some $\mu_0 \in [5/32,1/2)$ such that $\phi_1$ is a global minimizer of $\mathcal{G}_\mu$ for all $\mu \in (\mu_0,1/2)$. More precisely, we prove:
\begin{theorem}\label{min_phi1_bis}
	There exists $\mu_0 \in [5/32,1/2)$ such that, for all $\mu \in (\mu_0,1/2), \, \phi_1$ is the unique global minimizer, up to phase and space rotations, and for all $\mu < \mu_0$, any global minimizer of $\mathcal{G}_\mu$ has an infinite number of zeros.  
\end{theorem}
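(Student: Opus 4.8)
The plan is to adapt the homogenisation used for $\phi_0$. By scaling, $\phi_1$ is a global minimizer of $\mathcal{G}_\mu$ under $M=1$ if and only if the degree-four homogeneous functional
$$F_\mu^{(1)}(u):=8\pi\mathcal H(u)+\mu P(u)M(u)-\Big(\tfrac12+\mu\Big)M(u)^2$$
is non-negative on $L^{2,1}_{\E}$, since $\mathcal G_\mu(\phi_1)=\tfrac12+\mu$ and $F_\mu^{(1)}(u)\geq0$ is exactly the normalised inequality $\mathcal G_\mu\big(u/\sqrt{M(u)}\big)\geq\tfrac12+\mu$. The first step is the algebraic identity
$$F_\mu^{(1)}(u)=F_{1/2}(u)+\Big(\mu-\tfrac12\Big)M(u)\big(P(u)-M(u)\big),$$
together with the facts already proved that $F_{1/2}=2E\geq0$ (Propositions~\ref{prop_BE},~\ref{prop_Fmu} and~\ref{Prop_B0}) and that $F_{1/2}(u)=0$ forces $B(u)=Q(u)=0$, hence by Lemma~\ref{cas_egalite_B} and Proposition~\ref{prop_mu_12} that $u$ lies, up to phase/space rotations and scaling, in $\{\phi_0,\phi_1,\psi_b\}$.

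Because $P\geq0$ and $\mathcal H\geq0$, on $\{P\le M\}$ one has $F_\mu^{(1)}\geq F_{1/2}\geq0$ for every $\mu\leq\tfrac12$, so only $\{P>M\}$ is binding, where $F_\mu^{(1)}\geq0$ reads $\mu\geq\tfrac12-F_{1/2}(u)/\big(M(u)(P(u)-M(u))\big)$. I would therefore set
$$c:=\inf_{\substack{u\in L^{2,1}_{\E}\\ P(u)>M(u)}}\frac{F_{1/2}(u)}{M(u)\big(P(u)-M(u)\big)},\qquad \mu_0:=\tfrac12-c,$$
so the theorem reduces to $c>0$. Indeed, for $\mu>\mu_0$ the estimate gives $F_\mu^{(1)}(u)>0$ on $\{P>M\}$, while on $\{P\le M\}$ the only zeros of $F_{1/2}$ are $\phi_0$ and $\psi_b$ (where $P<M$, so $F_\mu^{(1)}>0$) and $\phi_1$ (where $F_\mu^{(1)}=0$); hence $\phi_1$ is the unique minimizer up to symmetries. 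The bound $\mu_0\geq\tfrac5{32}$ follows since a global minimizer is a local one and $\phi_1$ is a local minimizer only for $\mu>\tfrac5{32}$ by~\cite{GGT}.

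To prove $c>0$ I would control $c$ by the size of $P$. Normalising $M(u)=1$, one has $F_{1/2}(u)=8\pi\mathcal H(u)+\tfrac12P(u)-1\geq\tfrac12P(u)-1$, so the quotient is bounded below by about $\tfrac12$ once $P(u)$ is large; thus any sequence with quotient tending to $c$ and $F_{1/2}\to0$ keeps $P$ bounded. A unit-mass sequence with bounded $P$ is bounded in $L^{2,1}_{\E}$ and has uniformly small coefficient tails, $\sum_{k>K}|a_k|^2\leq P/(K+1)$, hence is precompact in $L^{2,1}_{\E}$; any limit $u_\ast$ satisfies $F_{1/2}(u_\ast)=0$ and $P(u_\ast)\geq M(u_\ast)$, which by the equality analysis forces $u_\ast$ into the orbit of $\phi_1$ (as $P(\phi_0)=0$ and $P(\psi_b)=(1+b^2)^{-2}<1$). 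The problem thus localises entirely to a second-order analysis at $\phi_1$.

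This local analysis is the main obstacle. Writing $u=\sum a_k\phi_k$ near $\phi_1$ on the sphere, $P(u)-M(u)=-|a_0|^2+\sum_{k\geq2}(k-1)|a_k|^2$ is an explicit quadratic form, while $F_{1/2}$ vanishes to second order with some Hessian $\mathcal Q\geq0$. The key is to identify $\ker\mathcal Q$ and to check it is \emph{transverse} to $\{P\geq M\}$. The family $\psi_b$ has $F_{1/2}(\psi_b)\equiv0$, so $\tfrac{d}{db}\psi_b|_{b=0}\in\ker\mathcal Q$, but along it $P-M=(1+b^2)^{-2}-1\sim-2b^2<0$, so this degenerate direction avoids $\{P>M\}$; conversely the magnetic-translation directions do lie in $\{P>M\}$ yet are non-degenerate for $F_{1/2}$, since $B$ is translation-invariant while $F_{1/2}(R_\alpha\phi_1)=\tfrac12|Q(R_\alpha\phi_1)|^2=\tfrac12|\alpha|^2$, giving a finite positive quotient. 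Establishing that $\ker\mathcal Q\cap\{P\geq M\}=\{0\}$—the infinitesimal strict local minimality of $\phi_1$, consistent with~\cite{GGT}—together with a spectral-gap estimate for the remaining coordinates, bounds the generalised Rayleigh quotient $\mathcal Q/\mathrm{Hess}(P-M)$ from below and yields $c>0$. Finally, for $\mu<\mu_0$ the functional $F_\mu^{(1)}$ is negative somewhere, so $\phi_1$ is not global; a global minimizer still exists and is a stationary wave, and were its zeros finite it would be some $\phi_k^\alpha$ or $\psi_b$ by~\cite{GGT}, none of which is a local minimizer for $\mu<\tfrac12$ except $\phi_1$—already excluded—so it must have infinitely many zeros.
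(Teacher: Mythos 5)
Your route is genuinely different from the paper's (which runs a soft compactness argument on a sequence of global minimizers $u_\mu$ as $\mu\nearrow \frac12$, identifies the limit via Theorem~\ref{case_mu_12_bis}, and eliminates $\phi_0$ and $\psi_a$ with Lemma~\ref{lemme_min_P}). Your homogenisation is correct: the identity $F_\mu^{(1)}=F_{1/2}+(\mu-\frac12)M(P-M)$ checks out, the reduction of the theorem to $c>0$ is valid, and your uniqueness argument for $\mu\in(\mu_0,\frac12)$ (strict positivity on $\{P\neq M\}$, equality case on $\{P=M\}$ handled by the classification of $F_{1/2}=0$) is clean --- arguably cleaner than the paper's. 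However, the proof is not complete, for two reasons.

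First, a fixable error: you claim the limit $u_\ast$ of a minimizing sequence satisfies $P(u_\ast)\geq M(u_\ast)$. The convergence you actually obtain from bounded $P$ and unit mass is strong $L^2$ convergence (not $L^{2,1}$: a uniform bound $\sum_k k|a_k|^2\leq C$ gives small $L^2$-tails but not small $L^{2,1}$-tails), under which $P$ is only \emph{lower} semicontinuous, $P(u_\ast)\leq\liminf P(u_n)$ --- the wrong direction; $P$-mass could a priori escape to high modes. The repair is to use the numerator: Carlen's estimate gives $\mathcal H(u_n)\to\mathcal H(u_\ast)$, and $F_{1/2}(u_n)\to 0$ then forces $P(u_n)\to 2\bigl(1-8\pi\mathcal H(u_\ast)\bigr)=P(u_\ast)$ (the last equality because $F_{1/2}(u_\ast)=0$), whence $P(u_\ast)=\lim P(u_n)\geq 1$ and $u_\ast$ is in the orbit of $\phi_1$ since $P(\phi_0)=0$ and $P(\psi_b)=(1+b^2)^{-2}<1$ for $b>0$.

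Second, and this is the substantive gap: the entire proof of $c>0$ hinges on the ``second-order analysis at $\phi_1$'', and you do not carry it out. The sentence ``Establishing that $\ker\mathcal Q\cap\{P\geq M\}=\{0\}$ \dots together with a spectral-gap estimate \dots yields $c>0$'' is a statement of what remains to be proved, not a proof; moreover, even granting the Hessian facts, passing from infinitesimal positivity transverse to a degenerate direction to a lower bound on the $0/0$ quotient $F_{1/2}/\bigl(M(P-M)\bigr)$ on a full neighbourhood requires a quantitative Taylor expansion with remainder control in $L^{2,1}_\E$, which is delicate precisely because of the $\psi_b$-kernel direction. This step can be short-circuited: once your minimizing sequence is shown to converge to $\phi_1$ in $L^{2,1}$ (which follows from $P(u_n)\to P(\phi_1)$ above), the \emph{strict local minimality} of $\phi_1$ for any fixed $\mu=\frac12-\delta\in(\frac5{32},\frac12)$, proved in \cite{GGT}, gives $F_{1/2-\delta}^{(1)}\geq 0$, i.e.\ $q\geq\delta$, on a neighbourhood of the orbit of $\phi_1$, contradicting $q(u_n)\to c=0$. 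That is essentially the ingredient the paper itself invokes at the corresponding point of its own proof; without it (or a completed Hessian analysis), your argument does not yet establish $c>0$.
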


First, we need to prove:
\begin{lemme}\label{lemme_min_P}
    Consider $\mu_1 \in ]0,1/2[$ and $u\in \E$ such that 
    $$ \mathcal{G}_{\mu_1}(u) < \mathcal{G}_{\mu_1}(\phi_1).$$
    Then we have 
    $$ P(u) > P(\phi_1) = 1.$$
\end{lemme}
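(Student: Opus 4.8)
The plan is to reduce everything to the single positivity statement $E\geq 0$ established in Proposition~\ref{Prop_B0} (equivalently $F_{1/2}\geq 0$ via Proposition~\ref{prop_BE}), after which the conclusion follows by a one-line manipulation. Throughout I normalise $M(u)=1$, which is the relevant setting for~\eqref{min_pb}; recall that then $\mathcal{G}_{\mu_1}(\phi_1)=\frac12+\mu_1$ and $P(\phi_1)=1$. If $u\notin L^{2,1}_{\mathcal E}$ then $P(u)=+\infty$ and the conclusion holds trivially, so I may assume $u\in L^{2,1}_{\mathcal E}$.

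First I would recall that, by Proposition~\ref{prop_BE} combined with Proposition~\ref{Prop_B0}, we have $E(u)\geq 0$, hence $F_{1/2}(u)=2E(u)\geq 0$. Writing this out with $M(u)=1$ gives the lower bound $8\pi\mathcal{H}(u)\geq 1-\tfrac12 P(u)$, which is the only nontrivial ingredient of the argument.

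Next I would insert this bound into the definition of $\mathcal{G}_{\mu_1}$:
\[
\mathcal{G}_{\mu_1}(u)=8\pi\mathcal{H}(u)+\mu_1 P(u)\geq \Big(1-\tfrac12 P(u)\Big)+\mu_1 P(u)=1+\Big(\mu_1-\tfrac12\Big)P(u).
\]
Combining this with the hypothesis $\mathcal{G}_{\mu_1}(u)<\mathcal{G}_{\mu_1}(\phi_1)=\frac12+\mu_1$ yields the strict inequality $1+(\mu_1-\tfrac12)P(u)<\frac12+\mu_1$, that is, $(\mu_1-\tfrac12)P(u)<\mu_1-\tfrac12$.

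Finally, since $\mu_1<\frac12$ the factor $\mu_1-\tfrac12$ is strictly negative, so dividing by it reverses the inequality and gives $P(u)>1=P(\phi_1)$, as claimed. There is no real obstacle beyond invoking the positivity $E\geq 0$ proved earlier; the remaining content is simply the observation that $\mathcal{G}_\mu$ is an affine function of $P$ whose value at a competitor is controlled below by a line that meets the value at $\phi_1$ exactly at $\mu=\frac12$, so for $\mu<\frac12$ any $u$ beating $\phi_1$ must compensate by strictly increasing $P$.
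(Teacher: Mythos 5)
Your proof is correct and is essentially the paper's argument: both rest on the lower bound $\mathcal{G}_{1/2}(u)\geq 1=\mathcal{G}_{1/2}(\phi_1)$ (you derive it directly from $F_{1/2}=2E\geq 0$ via Propositions~\ref{prop_BE} and~\ref{Prop_B0}, the paper cites Theorem~\ref{case_mu_12_bis}, which comes from the same positivity) combined with the affine dependence of $\mu\mapsto\mathcal{G}_\mu(v)$ on $P(v)$. The algebraic manipulation and the sign argument at the end coincide with the paper's.
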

\begin{proof}
    We first note that, for a fixed function $v\in \E$, the application $\mu \mapsto \mathcal{G}_{\mu}(v)$ is a straight line, whose slope is $P(v)$ and with abscissa at the origin $8\pi \mathcal{H}(v).$ We also remark that
    \begin{equation}\label{eq:comparaison_Gmu}
        \mathcal{G}_{\frac12}(v) = \mathcal{G}_{\mu}(v) + \pa{\frac12 - \mu}P(v).
    \end{equation}
    Now, let us consider $\mu_1 \in ]0,1/2[$ and $u\in \E$ such that 
    $$ \mathcal{G}_{\mu_1}(u) < \mathcal{G}_{\mu_1}(\phi_1).$$
    By Theorem~\ref{case_mu_12_bis}, we know that $\phi_1$ is a global minimizer for $\mu=\frac12$, so that 
    $$ \mathcal{G}_{\frac12}(u) \geq \mathcal{G}_{\frac12}(\phi_1).$$
    By~\eqref{eq:comparaison_Gmu} applied for both $u$ and $\phi_1$ in $\mu=\mu_1$, we have
    $$ \mathcal{G}_{\mu_1}(u) + \pa{\frac12 - \mu_1}P(u) \geq \mathcal{G}_{\mu_1}(\phi_1) + \pa{\frac12 - \mu_1}P(\phi_1),$$
    which leads to 
    $$ \pa{\frac12 - \mu_1}\pa{P(\phi_1)-P(u)} \leq \mathcal{G}_{\mu_1}(u) - \mathcal{G}_{\mu_1}(\phi_1) < 0,$$
    so that $P(\phi_1)-P(u) < 0$ since $\mu_1 < \frac12$, and we conclude.
\end{proof}

    \begin{center}
    \begin{figure}[ht]
    \begin{tikzpicture}
    \begin{axis}[
    	  xlabel={$\mu$},
            ylabel={$\mathcal{G}_\mu$},
            axis on top=true,
            xmin=0,
            xmax=0.6,
            ymin=0.4,
            ymax=1.2,
            height=9.0cm,
            width=12.0cm,
            grid, 
    	 legend entries={},
    	 legend style={at={(0,1)},anchor=north west}]
        ]
            \draw [dashed, opacity=1] (axis cs:{0.501,0}) -- (axis cs:{0.501,1.2});    
            \draw [dashed, opacity=1] (axis cs:{0.15,0}) -- (axis cs:{0.15,1.2});
    	\addplot [domain=0:1, samples=80, mark=none, thick, green, name path=G0] {0.45+1.2*x};
    		\addlegendentry{$\mathcal{G}_\mu(u)$}
    	\addplot [domain=0:1, samples=80, mark=none, thick, red, name path=G1] {1/2+x};
    		\addlegendentry{$\mathcal{G}_\mu(\phi_1)$}
            \node [right, black] at (axis cs: 0.15,0.95) {$\mu = \mu_1$};
            \node [right, black] at (axis cs: 0.43,0.55) {$\mu = \frac12$};
    \end{axis}
    \end{tikzpicture}
    \caption{Plots of $\mathcal{G}_\mu(v)$ for $v=\phi_1$ and for a better candidate below $\mu=\frac12$.} \label{fig:graph2}
    \end{figure}
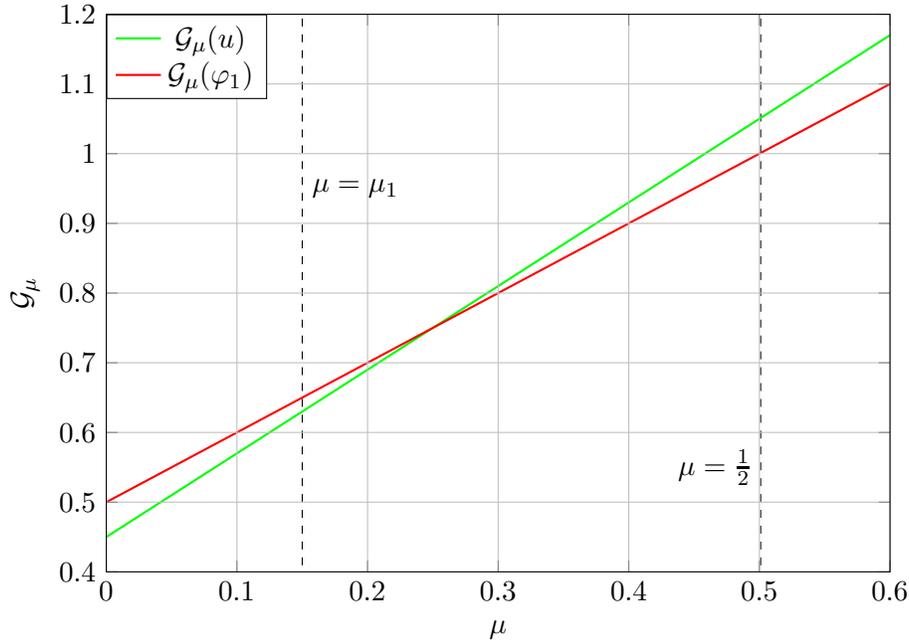
    \end{center}

\begin{remark}
    To illustrate Lemma~\ref{lemme_min_P} above, we plot on Fig.~\ref{fig:graph2}, $\mathcal{G}_\mu(\phi_1)$ and $\mathcal{G}_\mu(u)$ for a function~$u$ which is a better candidate than~$\phi_1$ at some $\mu = \mu_1 < \frac12$.
\end{remark}
\begin{remark}\label{rk_min_P}
    If we replace the hypothesis $\mathcal{G}_{\mu_1}(u) < \mathcal{G}_{\mu_1}(\phi_1)$ by $ \mathcal{G}_{\mu_1}(u) \leq \mathcal{G}_{\mu_1}(\phi_1),$ we obtain $P(u) \geq P(\phi_1)$. The proof is identical.
\end{remark}

We can now turn to the:
\begin{proof}[Proof of Theorem~\ref{min_phi1_bis}]
The proof is variational.
Denote by $(u_\mu)$ a sequence of global minimizers in $\{ u \in \mathcal{E}, M(u)=1 \}$ of $\mathcal{G}_\mu$ for any $\mu \in (5/32,1/2).$ Note that such a global minimizer for a fixed $\mu$ \emph{is not} unique, because of the symmetries of the equation. We have $M(u_\mu)=1$. From 
$$ 0 \leq \mathcal{G}_\mu(u_\mu) = 8\pi \mathcal{H}(u_\mu) + \mu P(u_\mu) \leq \mathcal{G}_\mu(\phi_0) = 1. $$
we get the existence of some absolute constant $C > 0$ such that $P(u_\mu) \leq C$ and $\mathcal{H}(u_\mu) \leq C$. By Carlen estimates~\eqref{eq_carlen}, $(u_\mu)$ is uniformly  bounded in~$\C$, and then also in $B(0,R)$ for all $R>0$. Writing $$u_\mu(z) = f_\mu(z) e^{-\frac{|z|^2}{2}}, \qquad f_\mu \, \text{holomorphic}, $$
implies that~$(f_\mu)$ is uniformly bounded in $B(0,R)$ for all $R>0$. By Montel's Theorem, we obtain a sequence~$(\mu_n)$ such that the sequence~$(f_{\mu_n})$ converges uniformly on all compacts of~$\C$ to a function~$f$ which is holomorphic on~$\C$, so that~$(u_{\mu_n})$ converges uniformly to $u := fe^{-\frac{|z|^2}{2}}.$ By Fatou's Lemma, we get $u \in \E.$ Since $M(u_{\mu_n}) = 1$, we can apply the concentration-compactness of~\cite{Lions1} to the sequence~$(|u_{\mu_n}|^2)$. We now show that we can only have compactness. First, the vanishing case is not possible. Indeed, we have for $R>1$, 
\begin{align*}
     P(u_\mu) &= \int_\C(|z|^2-1)|u_\mu(z)|^2 \dd L(z) \geq \int_{|z| \geq R}(R^2-1)|u_\mu(z)|^2\dd L(z) + \int_{|z| \leq 1}(|z|^2-1)|u_\mu(z)|^2\dd L(z) \\
     & \geq (R^2-1) \|u_\mu\|_{L^2(|z| \geq R)}^2 -1,
\end{align*}
which gives, using $P(u_\mu) \leq C$:
\begin{equation}\label{compactness_u_mu}
    \|u_\mu\|_{L^2(|z| \geq R)}^2 \lesssim \frac{1}{R^2}.
\end{equation}
We deduce that for any $0 < \eps < 1$, there exists $R_0 > 1$ satisfying 
$$ \sup_{y \in \C} \int_{B(y,R_0)} |u_{\mu_n}(z)|^2 \dd L(z) \geq \int_{B(0,R_0)} |u_{\mu_n}(z)|^2 \dd L(z) \geq 1-\eps,$$
and we cannot have vanishing. We can also show that the dichotomy case is not possible with similar arguments. Consequently, we are in the compactness case, and there exist $(z_k)$ and $(n_k)$ two sequences such that, for all $\eps >0$, there exists $R > 0$ satisfying
$$ \forall k \in \N, \quad \int_{B(z_k,R)}|u_{\mu_{n_k}}(z)|^2\dd L(z) \geq 1-\eps.$$
By Eq.~\eqref{compactness_u_mu}, we get that $(z_k)$ is bounded, so a sub-sequence converges to $\tilde{z} \in \C$. By extraction again, we have 
$$\int_{B(\tilde{z},R)}|u_{\mu_{m_k}}(z)|^2\dd L(z) \geq 1-\eps.$$
Using $u_{\mu_{m_k}} \to u \in \E$ uniformly on all compact sets of $\C$:
$$ 1 - \eps \leq \int_{B(\tilde{z},R)}|u_{\mu_{m_k}}(z)|^2\dd L(z) \to \| u \|^2_{L^2(B(\tilde{z},R))} \leq \| u \|^2_{L^2(\C)} = M(u),$$
and then $M(u) \geq 1$. By Fatou's Lemma, $M(u) \leq 1$, so that $M(u) = 1$. Consequently we can form a strictly increasing sequence $(\mu_n)$ of real numbers in $(5/32,1/2)$ such that $\mu_n \to 1/2$ and the sequence $(u_{\mu_n})$ converges in $L^2$ to $u\in \mathcal{E}$ such that $M(u)=1$. By Carlen estimates~\eqref{eq_carlen}, we have $\mathcal{H}(u_{\mu_n}) \to \mathcal{H}(u)$, and by lower semi-continuity of $P$, we obtain
\begin{align*}
    \mathcal{G}_{1/2}(u) &= 8\pi \mathcal{H}(u) + \frac12 P(u) = 8\pi \lim_{n \to +\infty}\mathcal{H}(u_{\mu_n}) + \pa{\liminf_{n \to +\infty} \mu_n} P(u) \\
    &\leq 8\pi \lim_{n \to +\infty}\mathcal{H}(u_{\mu_n}) + \pa{\liminf_{n \to +\infty} \mu_n} \pa{\liminf_{n \to +\infty}P(u_{\mu_n})} \leq \liminf_{n \to +\infty}\pa{\mathcal{G}_{\mu_n}(u_{\mu_n})} \\
    & \leq 1 = \mathcal{G}_{1/2}(\phi_0),
\end{align*}
where the last inequality comes from $\mathcal{G}_{\mu_n}(u_{\mu_n}) \leq \mathcal{G}_{\mu_n}(\phi_0) =1$ since $u_{\mu_n}$ is a global minimizer for $\mu = \mu_n$. Then $u$ is a global minimizer for $\mu=\frac12$, so we get $u \in \{ \phi_0, \phi_1, \psi_b\}$, modulo space and phase rotations, by Theorem~\ref{case_mu_12}. Now we aim at showing that $u$ is proportional to $\phi_1$. \\

$\bullet$ Let's consider $u_\mu \xrightarrow[\mu \to 1/2]{ } a_0\phi_0$ in $L^2(\C),$ for $\mu < 1/2$, and with mass $M(u_\mu)=1$. By action of phase rotation, we can assume that $a_0=1$. By Carlen estimates~\eqref{eq_carlen}, $u_\mu \xrightarrow[\mu \to 1/2]{ } \phi_0$ in $L^2(\C)$ implies $u_\mu \xrightarrow[\mu \to 1/2]{ } \phi_0$ in $L^4(\C),$ so that $\mathcal{H}(u_\mu) \to \mathcal{H}(\phi_0)$. From $\mathcal{G}_\mu(u_\mu) \to 1$, we deduce that 
\begin{equation*}
    P(u_\mu) \xrightarrow[\mu \to 1/2]{ } 0.
\end{equation*}
It means that $u_\mu \to \phi_0$ in $H^1$. There exists $\mu_1 \in (5/32,1/2)$ such that, for all $\mu \in (\mu_1,1/2),$ we have 
$$P(u_\mu) \leq \frac{1}{2} < 1 = P(\phi_1),$$
so $u_\mu$ cannot be global minimizer by Lemma~\ref{lemme_min_P} above. 

$\bullet$ Let's consider $u_\mu \xrightarrow[\mu \to 1/2]{ } C\psi_a$ in $L^2(\C),$ for $\mu < \frac{1}{2}, 0 \leq a \leq +\infty$, and with mass $M(u_\mu)=1$. By action of phase rotation, we can assume that $C=1$. By Carlen estimates~\eqref{eq_carlen}, $u_\mu \xrightarrow[\mu \to 1/2]{ } \psi_a$ in $L^2(\C)$ implies $u_\mu \xrightarrow[\mu \to 1/2]{ } \psi_a$ in $L^4(\C),$ so that $\mathcal{H}(u_\mu) \to \mathcal{H}(\psi_a)$. From $\mathcal{G}_\mu(u_\mu) \to 1$, we deduce that 
\begin{equation}\label{eq_lim_pa}
    P(u_\mu) \xrightarrow[\mu \to 1/2]{ } \frac{1}{1+a^2}.
\end{equation}
It means that $u_\mu \to \psi_a$ in $H^1$. If $a \neq 0$, then $\frac{1}{1+a^2} \in (0,1)$, and there is $\delta > 0$ such that $\frac{1}{1+a^2} + \delta < 1$. Due to the limit~\eqref{eq_lim_pa}, there exists $\mu_2 \in (5/32,1/2)$ such that, for all $\mu \in (\mu_2,1/2),$ we have 
$$P(u_\mu) \leq \frac{1}{1+a^2} + \delta < 1 = P(\phi_1),$$
so $u_\mu$ cannot be global minimizer by Lemma~\ref{lemme_min_P} above.

$\bullet$ We proved that the limit $u$ of $(u_n)$ must be proportional to $\phi_1$. We define $\mu_0$ as the infimum of $\mu$ such that $\phi_1$ is a global minimum. So far, $\mu_0$ could be equal to $\frac12$. Let us prove $\mu_0 < \frac12.$ Consider $u_\mu = \phi_1 + \chi_\mu \xrightarrow[\mu \to 1/2]{ } \phi_1$ in $L^2(\C)$, which corresponds to the case $u_\mu \to \psi_a$, with $a=0$. The function~$u_\mu$ is then a deformation of $\phi_1$, which is a \emph{strict} (up to the symmetries $L_\theta$ and $T_\gamma$) local minimizer for any $\mu \in (5/32,1/2)$, so that, for $\mu$ close enough to $1/2$, $u_\mu = e^{i\theta_\mu}\phi_1 = L_{\theta_\mu}\phi_1 = T_{\theta_\mu}\phi_1$, with $\theta_\mu \in \R.$ It means that $\mu_0 < \frac12$.\\
By Sect.~\ref{stats_waves}, for any $\mu < \mu_0$, no stationary waves with a finite number of zeros can be local minimizer besides $\phi_1$, so any global minimizer has an infinite number of zeros. \\
Let us now prove that $\phi_1$ is the \emph{unique} (up to symmetries) global minimizer if $\mu_0 < \mu < \frac12.$ Suppose there is $u\in \E$ with mass $M(u)=1$ such that $\mathcal{G}_{\mu_3}(u) = \mathcal{G}_{\mu_3}(\phi_1)$ for some $\mu_3 \in [\mu_0,\frac12)$. Then, by Remark~\ref{rk_min_P}, we have $P(u) \geq P(\phi_1).$ If $P(u) > P(\phi_1)$, we show $\mu_3 = \mu_0:$ for all $\eps > 0$, we have
$$ \mathcal{G}_{\mu_3-\eps}(u) = \mathcal{G}_{\mu_3}(u) + (\mu-\eps - \mu) P(u) = \mathcal{G}_{\mu_3}(\phi_1)- \eps P(u) < \mathcal{G}_{\mu_3}(\phi_1) - \eps P(\phi_1) = \mathcal{G}_{\mu_3-\eps}(\phi_1).$$ Then, 
$$ \forall \tilde{\mu} < \mu_3, \qquad \mathcal{G}_{\tilde{\mu}}(u) < \mathcal{G}_{\tilde{\mu}}(\phi_1),$$
which implies by definition of $\mu_0$ that $\mu_3 \leq \mu_0$. Else, suppose $P(u) = P(\phi_1)$, and $\mu_3 \in (\mu_0,\frac12).$ Then, the two straight lines $F_1 : \mu \mapsto \mathcal{G}_{\mu}(u)$ and $F_2 : \mu \mapsto \mathcal{G}_{\mu}(\phi_1)$ are parallel and take the same value in $\mu = \mu_3$, so they coincide on all points. In particular, $F_1(\frac{1}{2}) = F_2(\frac{1}{2})$, meaning that $u$ is also a global minimizer for $\mu=\frac12$. We conclude that $u = \phi_1$, up to phase and space rotations, with Theorem~\ref{case_mu_12_bis} and $P(u) = P(\phi_1)=1.$
\end{proof}


\section*{Statements and Declarations}
\subsection*{Data Availability}
Data sharing not applicable to this article as no datasets were generated or analysed during the current study.

\subsection*{Conflict of interest} 
The author certifies that he has no affiliations with or involvement in any organization or entity with any financial interest in the subject matter or materials discussed in this manuscript.


\end{document}